\newtheorem{theorem}{Theorem}[section]
\newtheorem{lemma}[theorem]{Lemma}
\newtheorem{corollary}[theorem]{Corollary}
\theoremstyle{definition}
\newtheorem{definition}[theorem]{Definition}
\newtheorem{remark}[theorem]{Remark}
\newcommand{\column}[2]{\begin{pmatrix}#1\cr #2\end{pmatrix}}
\newcommand{\A}{\mathcal A}
\DeclareMathOperator{\Gal}{Gal}
\newcommand{\F}{\mathbb F}
\newcommand{\Q}{\mathbb Q}
\newcommand{\Z}{\mathbb Z}
\newcommand{\gl}{{\rm GL}}
\newcommand{\GL}{{\rm GL}}
\DeclareMathOperator{\diag}{diag}
\newcommand{\SL}{{\rm SL}}
\newcommand{\T}{\mathcal T}
\newcommand{\lcm}{{\rm LCM}}
\newcommand{\transpose}[1]{{}^t#1}
\newcommand{\transfer}{{\rm tr}}
\newcommand{\corestriction}{i}
\newcommand{\OK}{\mathfrak O}
\newcommand{\Ol}{\mathfrak O_\ell}
\newcommand{\Ow}{\mathfrak O_w}
\newcommand{\homothety}{\mathcal H}
\newcommand{\basepointy}{{\bar\omega}}
\newcommand{\basepoint}{{\tilde\omega}}
\DeclareMathOperator{\Frob}{Frob}
\DeclareMathOperator{\tr}{Tr}
\DeclareMathOperator{\ind}{Ind}
\newcommand{\KSQ}{K_{S,q}}
\newcommand{\XSQ}{X_{S,q}}
\newcommand{\xS}{x}
\newcommand{\MSQ}{M_{S,q}}
\newcommand{\stab}[1]{\Gamma_{#1}}
\newcommand{\stabhat}[1]{\hat\Gamma_{#1}}
\newcommand{\zreps}{\mathcal B}
\newcommand{\ZSQ}{Z_{S,q}}
\newcommand{\zxs}{z_{\xS}}
\DeclareMathOperator{\disc}{disc}
\newcommand{\wpart}{I_\alpha(w)}
\DeclareMathOperator{\gal}{Gal}
\DeclareMathOperator{\frob}{Frob}
\DeclareMathOperator{\Ad}{Ad}
\newcommand{\zorbitreps}{\mathcal C}
\title{Even Galois representations and the cohomology of $\GL(2,\Z)$}
\author{Avner Ash}
\address{Boston College, Chestnut Hill, MA  02467}
\email{Avner.Ash@bc.edu}
\author{Darrin Doud}
\address{Brigham Young University, Provo, UT  84602}
\email{doud@math.byu.edu}
\date{February 23, 2017}
\begin{document}
\begin{abstract}
Let $\rho$ be a  two-dimensional even Galois representation which is induced from a character $\chi$ of odd order of the absolute Galois group of a real quadratic  field.  After imposing some additional conditions on $\chi$, we attach $\rho$ 
 to a Hecke eigenclass in the cohomology of $\gl(2,\Z)$ with coefficients in a certain infinite-dimensional vector space over a field of characteristic not equal to 2.
\end{abstract}
\maketitle

\section{Introduction}

In this paper a Galois representation will be a continuous representation $\rho:G_\Q\to\gl(n,\F)$ where $\F$ is either a topological field of characteristic 0 or a finite field.  When the characteristic of $\F$ is not two, we say that $\rho$ is odd if the image of complex conjugation is conjugate to a diagonal matrix with alternating $1$'s and $-1$'s on the diagonal.  If $\F$ has characteristic two, every Galois representation is considered to be odd.  When $n=2$, $\rho$ is odd, and $\F$ is a finite field, Serre's conjecture \cite{Serre-1987} (now a theorem of Khare and Wintenberger \cite{KW1,KW2}) states that $\rho$ is attached to a modular form that is an eigenform of the Hecke operators.  This means that the characteristic polynomial of the image of a Frobenius element at an unramified prime $\ell$ under $\rho$ equals a certain polynomial created from the eigenvalues of the Hecke operators at $\ell$.  Other papers \cite{ADP,AS,Herzig} conjecture a similar attachment for $n\geq 2$, with modular forms replaced by elements of arithmetic cohomology groups.  Work of Scholze \cite{Scholze} proves that any eigenclass of the Hecke operators in the cohomology of a congruence subgroup of $\SL(n,\Z)$ with coefficients in a finite-dimensional admissible module $M$ over a field $\F$ has an attached Galois representation. For a field $\F$ of characteristic 0, this theorem was already proven in \cite{HLTT} by Harris, Lan, Taylor and Thorne. Caraiani and Le Hung \cite{Caraiani} showed that the representation guaranteed by Scholze's theorem must be odd.  (``Admissible" means that  if $\F$ has characteristic 0 then $M$ is an algebraic representation, and if $\F$ has positive characteristic, then the matrices used to define the Hecke operators act on $M$ via reduction modulo some fixed integer.)

In this paper, we attach certain even Galois representations to eigenclasses in arithmetic cohomology groups.  The details of our main result may be seen in Theorem~\ref{T:main}, at the end of the paper.  Following \cite{Caraiani} we know that we will need to use a non-admissible, infinite dimensional coefficient module for the cohomology.  We also have to be careful with the exact definition of ``attachment", which we now explain.

Let $f$ be a  modular form of weight $k\geq 0$ on the upper half plane, with level $\Gamma_1(N)$ and nebentype $\theta$, and suppose that $f$ is an eigenform for the Hecke operators $T_\ell$ and $T_{\ell,\ell}$ for all $\ell\nmid N$.  Denote the eigenvalue of $T_\ell$ by $a_\ell$, and the eigenvalue of $T_{\ell,\ell}$ by $A_\ell$.  When $k\geq 2$, and $f$ is holomorphic, there is a Galois representation $\rho$ such that for all $\ell\nmid N$,
\[\det(I-\rho(\frob_\ell)X)=1-a_\ell X+\ell A_\ell X^2,\]
where (in this case), $A_\ell$ is easily seen to be equal to $\ell^{k-2}\theta(\ell)$.

The cases when $k=0$ or $1$ are different,  because then $f$ is not cohomological.  If $k=1$ and $f$ is holomorphic, or if $k=0$ and $f$ is a Maass form where the eigenvalue of the Laplacian is $1/4$, there is an attached Galois representation (this is only conjectural in the Maass form case) with finite image.  In both cases, the motivic weight of $f$ is 0, and the characteristic polynomial of 
$\Frob_\ell$ equals
$$
1-a_\ell X + \theta(\ell) X^2.
$$
These forms of the Hecke polynomials depend on the usual normalization of the Hecke operators.

In this paper, we will deal with the cohomology of $\gl(2,\Z)$ with a nonadmissible, infinite dimensional coefficient module and an even Galois representation.  Using the same  normalization of the Hecke operators as in the finite dimensional coefficient setting, we define attachment of the Galois representation as follows (compare \cite[Def. 2.1]{ADP}).
Note that this definition of ``attachment" is for the purposes of this paper only, although we may guess that it will be the correct definition to use for all even two-dimensional Galois representations.

\begin{definition}
Let $V$ be a Hecke module over the field $\F$, and let $v\in V$ be an eigenvector for the Hecke operators $T_\ell$ and $T_{\ell,\ell}$ for almost all primes.  Let $a_\ell$ be the eigenvalue of $T_\ell$ acting on $v$, and $A_\ell$ the eigenvalue of $T_{\ell,\ell}$ acting on $v$. Let $\rho:G_\Q\to\gl(2,\F)$ be a Galois representation.  We say that $\rho$ is attached to $v$ if, for almost all $\ell$,
\[\det(I-\rho(\frob_\ell)X)=1-a_\ell X+A_\ell X^2.\]
\end{definition}

Our main theorem (Theorem~\ref{T:main}) then takes the following form.

Let $K$ be a real quadratic field of discriminant $d$, let $\F$ be a field with characteristic not equal to 2, 
and let $\chi:G_K\to\F^\times$ be a character with finite image, satisfying certain conditions (described in Theorem~\ref{T:main}).  Then $\rho:G_\Q\to\gl(2,\F)$ given by $\rho=\ind_{G_K}^{G_\Q}\chi$ is an even Galois representation, and is attached to a Hecke eigenclass in $H^1(\gl(2,\Z),\MSQ^*)$, where $q$ is a character related to $K$ (see Definition~\ref{D:q*}), $\MSQ$ is defined in Definition~\ref{D:MSQ}, and the asterisk denotes $\F$-dual. 

The coefficient module $\MSQ$ that we use is naturally defined in terms of the field $K$.
It is a non-algebraic infinite-dimensional module somewhat related to the kind that we we used in \cite{AD1} to study reducible cases of the Serre-type conjecture for $GL(3)/\Q$.  

Any Maass eigenform with eigenvalue $1/4$ is conjectured to have a Galois representation attached.  Also, the Galois representations we work with in this paper are known to be attached to Maass forms.  Our innovation is to prove attachment to something cohomological.  Besides the intrinsic interest of this, we hope to be able to use our main theorem, combined with techniques similar to those of \cite{AD1}, to prove a Serre type conjecture for the sum of $\rho$ and a
character such that the three-dimensional representation as a whole is odd, in the context of $GL(3)/\Q$.

The idea of our proof is the following.   We view $K$ as a two-dimensional $\Q$-vector space.  We construct a $GL(2,\Q)$
module $M$  consisting of formal sums of homothety classes of $\Z$-lattices in $K$, where the homotheties are given as multiplication by the elements of a carefully chosen 
subgroup $\KSQ$ of $K^\times$.  
We use homothety classes, rather than the lattices themselves, so that the stabilizer of a homothety class in $\gl(2,\Z)$ will be an infinite cyclic group generated by the image $g$ of a unit in the ring of integers of $K$ under an embedding of $K$ into $GL(2)$ as a non-split torus.  This matrix $g$ also stabilizes a closed geodesic in the quotient of the upper half plane modulo $\gl(2,\Z)$.  Our initial idea was to work with the fundamental classes of these closed geodesics, but of course we don't want to view them in the homology of $\gl(2,\Z)$ with admissible coefficients for the reasons stated above.  Instead we use the more algebraic approach of this paper.

 We focus on the submodule $\MSQ$ of $M$  which consists of formal sums that have finite support modulo the center and which have central character $q$.  As we just said, the stabilizer of a homothety class of lattices is an infinite cyclic group.  This allows us to use Shapiro's lemma to write the homology $H_1$ of $\gl(2,\Z)$ with coefficients in 
 $\MSQ$ in terms of the $H_1$ of these cyclic stabilizers, which is an algebraic version of the fundamental classes of the corresponding closed geodesics.  We then have to understand how the Hecke operators act.

We use the method of partial Hecke operators described in \cite{AD1} to get a tractable formula for the action of a Hecke operator on  $H_1(\gl(2,\Z),\MSQ)$.  Now a class in that homology group has finite support (modulo the center) on chains, and the Hecke operators always expand the support.  So there will not be any Hecke eigenvectors in the homology group $H_1(\gl(2,\Z),\MSQ)$.  We must seek for Hecke eigenvectors in the dual space $H^1(\gl(2,\Z),\MSQ^*)$.  We interpret elements of the dual space as functions on the space of lattices in $K$.  In order for us to construct such functions in a way that makes it possible to compute the Hecke operators at 
$\ell$, we use the Bruhat-Tits graph $T_\ell$ for $\gl(2,\Q_\ell)$ or a double cover $\T^2_\ell$ of $\T_\ell$, depending on whether 
 $\ell$ is  split or inert in $K$.  We then
 relate the Hecke operators at $\ell$ to a Laplacian on $\T_\ell$ (or $\T^2_\ell$) and to the action of the center.  This allows us to construct lattice functions that have the desired Hecke eigenvalues.  
 
 These functions are restricted infinite products over the rational primes of the local functions we construct on the graphs.  Lattices which are fractional ideals in $K$ play a special role in the study of $\MSQ$ and we call them ``idealistic" lattices. 
 The construction of the local functions depends on the crucial distinction between idealistic and non-idealistic lattices. To define a cohomology class, the infinite product has to satisfy a certain global invariance property (proved in Section~\ref{S:Invariance}), which is guaranteed  by the fact that $\chi$ can be viewed as a global character on ideals.

In case $\F$ has characteristic 2, where the distinction between odd and even Galois representations breaks down, a simplified version of our construction works to attach $\rho$ to a Hecke eigenclass in the cohomology of $\gl(2,\Z)$ with coefficients in the analog of $\MSQ$ (where now $q$ would be the identity central character.)  In this case, we can work directly with the usual Bruhat-Tits graph for both split and inert  primes. We do not cover this in this paper because when the characteristic of $\F$ equals 2, $\rho$ is deemed to be odd and is known to be attached to a homology class with admissible coefficients, by the work of Khare and Winteberger \cite{KW1,KW2} referred to above.

We thank Dick Gross, David Hansen, Richard Taylor, and especially Kevin Buzzard for helpful comments and answers to questions that arose during the course of this research.

\section{Lattices and Homotheties in $K$}\label{S:Lattices}

Fix a real quadratic field $K$, its ring of integers $\OK$ and an element $\omega\in\OK$ such that $\OK=\Z[\omega]$.  Let $d$ be the discriminant of $K/\Q$. Let $\epsilon$ be a fundamental unit, i.e.{} a unit whose image modulo $\pm1$ generates $\OK^\times/\{\pm1\}$.

Consider $K$ as a two-dimensional vector space over $\Q$.  By a lattice in $K$, we will mean a free $\Z$-module of rank $2$ contained in $K$.  Such a module has as a $\Z$-basis two $\Q$-linearly independent elements.

Let $Y$ be the set of all column vectors $\transpose{(a,b)}\in K^2$ with $b\neq 0$ and $a/b\notin \Q$. If we let 
$\basepointy=\transpose{(\omega,1)}\in Y$, then every element of $Y$ is of the form $\gamma \basepointy$ for some $\gamma\in\GL(2,\Q)$. In addition, given $y,y'\in Y$, there is a unique $\gamma\in\GL(2,\Q)$ with $y=\gamma y'$.  There is a natural action of $K^\times$ by scalar multiplication on $Y$, which we write as a right action.

\begin{definition} Let $y=\transpose{(a,b)}\in Y$.  Define $\Lambda_y$ to be the $\Z$-lattice in $K$ generated by $a$ and $b$ (i.e.\ the set of all integer linear combinations of $a$ and $b$).
\end{definition}

Note that for $\alpha\in K^\times$, we have $\Lambda_{y\alpha }=\alpha\Lambda_y$.

\begin{definition} Let $H\subseteq K^\times$ be a multiplicative subgroup of $K^\times$.  Two lattices $\Lambda_1$ and $\Lambda_2$ in $K$ will be said to be {\em homothetic} if there is some $\alpha\in K^\times$ such that $\Lambda_1=\alpha\Lambda_2$.  If $\alpha\in H$, we will say that the lattices are {\em $H$-homothetic}.
\end{definition}

Homothety and $H$-homothety of lattices are equivalence relations on the set of all lattices in $K$.  

\begin{definition}
Let $H$ be a multiplicative subgroup of $K^\times$.  Define $Y/H$ to be the quotient of $Y$ with respect to the right action of scalar multiplication by $H$.  The left action of $\gl(2,\Q)$ on $Y$ then gives a left action of $\gl(2,\Q)$ on $Y/H$.
\end{definition}

\begin{lemma} There is a bijection between $\gl(2,\Z)$-orbits of elements of $Y/H$ and $H$-homothety classes of lattices in $K$.
\end{lemma}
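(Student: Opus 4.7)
The plan is to use the assignment $y\mapsto \Lambda_y$ to produce the asserted bijection between $\gl(2,\Z)\backslash(Y/H)$ and the set of $H$-homothety classes of lattices in $K$. The argument splits into three short verifications: well-definedness on the double orbit space, surjectivity, and injectivity.

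First I would check well-definedness. For $\gamma\in\gl(2,\Z)$, the entries of $\gamma y$ form another $\Z$-basis of the $\Z$-module spanned by the entries of $y$, so $\Lambda_{\gamma y}=\Lambda_y$. Combined with the observation already recorded in the excerpt that $\Lambda_{y\alpha}=\alpha\Lambda_y$ for $\alpha\in K^\times$, this shows that on passing to $\gl(2,\Z)$-orbits in $Y/H$ the lattice $\Lambda_y$ is determined up to $H$-homothety, so we obtain a well-defined map on the orbit space.

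For surjectivity, given a lattice $\Lambda\subseteq K$, I would pick any $\Z$-basis $\{a,b\}$. Because $\Lambda$ has $\Z$-rank $2$ and sits inside the two-dimensional $\Q$-vector space $K$, the basis elements are forced to be $\Q$-linearly independent; in particular $b\neq 0$ and $a/b\notin\Q$, so $y=\transpose{(a,b)}\in Y$ and $\Lambda_y=\Lambda$. For injectivity, suppose $\Lambda_{y_1}$ and $\Lambda_{y_2}$ are $H$-homothetic, say $\Lambda_{y_1}=\alpha\Lambda_{y_2}=\Lambda_{y_2\alpha}$ for some $\alpha\in H$. Then the entries of $y_1$ and of $y_2\alpha$ are two $\Z$-bases of the same lattice, so they differ by a matrix $\gamma\in\gl(2,\Z)$, giving $y_1=\gamma(y_2\alpha)$. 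Hence $y_1$ and $y_2$ represent the same $\gl(2,\Z)$-orbit in $Y/H$.

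The only real obstacle is bookkeeping: one must track the interaction of the left $\gl(2,\Z)$-action and the right $H$-action carefully in both directions. Once the scalar $\alpha$ is absorbed on the right via the identity $\Lambda_{y\alpha}=\alpha\Lambda_y$, and the change of basis is absorbed on the left via $\Lambda_{\gamma y}=\Lambda_y$, the bijection is immediate; no genuinely difficult step is needed.
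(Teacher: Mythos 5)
Your proof is correct and follows essentially the same route as the paper's: both send a class of $y\in Y$ to the $H$-homothety class of $\Lambda_y$, check well-definedness via $\Lambda_{\gamma y}=\Lambda_y$ and $\Lambda_{y\alpha}=\alpha\Lambda_y$, obtain surjectivity from the fact that every lattice has a basis, and obtain injectivity by observing that two bases of the same lattice are related by an element of $\gl(2,\Z)$. The only (minor) addition you make is spelling out why an arbitrary $\Z$-basis of a lattice actually lies in $Y$, a point the paper leaves implicit.
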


\begin{proof}
Denote the set of $H$-homothety classes of lattices in $K$ by $\homothety$. Define a map
\[f:Y/H\to\homothety\]
by setting $f(x)$ equal to the $H$-homothety class of $\Lambda_y$ for any $y\in Y$ representing $\xS\in Y/H$.  Since the various $y$ representing $x$ all differ by scalar multiples by some element of $H$, it is clear that $f$ is well defined.  Since every lattice in $K$ is of the form $\Lambda_y$ for some $y\in Y$, the map $f$ is surjective. Finally, for $\gamma\in\GL(2,\Z)$, we have that $\gamma x$ is represented by $\gamma y$, and that $\Lambda_y=\Lambda_{\gamma y}$.  Hence, $f$ is constant on $\GL(2,\Z)$-orbits, and so induces a surjective map $\hat f$ from the set of $\GL(2,\Z)$-orbits in $Y/H$ to $\homothety$.

Now suppose that $x,x'\in Y/H$ are represented by $y,y'\in Y$, and $f(x)=f(x')$.  Then $\Lambda_y=\alpha\Lambda_{y'}=\Lambda_{y'\alpha}$ for some $\alpha\in H$.  Hence, the entries of both $y$ and $y'\alpha$ are a basis for $\Lambda_y$.  Therefore, there is some $\gamma\in\gl(2,\Z)$ such that $y=\gamma y'\alpha$.  Then $x=\gamma x'$, so $x$ and $x'$ are in the same $\GL(2,\Z)$-orbit.  Hence, $\hat f$ is an injective map on $\GL(2,\Z)$-orbits.
\end{proof}

\begin{lemma}\label{Lemma:mL} Let $\Lambda$ be a lattice in $K$.  Then there is a minimal positive integer $m$ such that $\epsilon^m\Lambda=\Lambda$.
\end{lemma}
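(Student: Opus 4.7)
The plan is to show that the set $S=\{m\in\Z_{>0}\mid\epsilon^m\Lambda=\Lambda\}$ is nonempty; once that is established, the minimum exists by well-ordering of $\Z_{>0}$.

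My first step would be to sandwich $\Lambda$ between two rational scalar multiples of $\OK$, i.e.\ to produce positive integers $A,B$ with
\[A\OK\subseteq\Lambda\subseteq B^{-1}\OK.\]
This is an elementary commensurability fact: both $\Lambda$ and $\OK$ are rank-$2$ free $\Z$-modules inside the two-dimensional $\Q$-vector space $K$, so clearing denominators of a $\Z$-basis of $\Lambda$ expressed in the basis $\{1,\omega\}$ of $\OK$ gives $B$, and writing $1$ and $\omega$ as rational combinations of a basis of $\Lambda$ and clearing denominators gives $A$.

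The crux is a pigeonhole step. Because $\epsilon\in\OK^\times$, multiplication by any power $\epsilon^n$ fixes $\OK$, and therefore also fixes the auxiliary lattices $A\OK$ and $B^{-1}\OK$. Multiplying the sandwich by $\epsilon^n$ hence yields
\[A\OK\subseteq\epsilon^n\Lambda\subseteq B^{-1}\OK\qquad\text{for every }n\in\Z.\]
The collection of $\Z$-submodules lying between these two fixed lattices is finite, being parametrized by the subgroups of the finite abelian group $B^{-1}\OK/A\OK$. By the pigeonhole principle there exist integers $n_1<n_2$ with $\epsilon^{n_1}\Lambda=\epsilon^{n_2}\Lambda$, whence $\epsilon^{n_2-n_1}\Lambda=\Lambda$ and $m=n_2-n_1\in S$.

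I do not expect a real obstacle: the argument is just a commensurability sandwich plus a pigeonhole on a finite set of intermediate lattices, and the only global input needed is that $\epsilon$ is a unit in $\OK$, which is built into the hypothesis that $\epsilon$ is a fundamental unit.
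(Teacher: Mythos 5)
Your proof is correct and takes essentially the same finiteness route as the paper, but with a slightly different mechanism at the key step. The paper first reduces (by $K^\times$-homothety) to the case $\Lambda\subseteq\OK$, sandwiches $N\OK\subseteq\Lambda\subseteq\OK$, and then observes that multiplication by $\epsilon$ induces a permutation of the finite quotient $\OK/N\OK$; some power $\delta=\epsilon^m$ therefore acts trivially on $\Lambda/N\OK$, giving $\delta\Lambda\subseteq\Lambda$, and the same holds for $\delta^{-1}$, forcing equality. You skip the WLOG reduction by using a two-sided sandwich $A\OK\subseteq\Lambda\subseteq B^{-1}\OK$, and then apply pigeonhole directly to the finite set of intermediate lattices containing the translates $\epsilon^n\Lambda$, concluding $\epsilon^{n_2}\Lambda=\epsilon^{n_1}\Lambda$ for some $n_1<n_2$. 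The payoff is symmetric: your version avoids both the homothety reduction and the separate inclusion-in-both-directions argument (since you get the equality $\epsilon^{n_2}\Lambda=\epsilon^{n_1}\Lambda$ in one stroke), while the paper's version is a bit more explicit about where the integer $m$ comes from (a divisor of the order of $\epsilon$ as an automorphism of $\OK/N\OK$). Both are clean; there is no gap in yours.
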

\begin{proof} 
Note that if $\Lambda$ and $\Lambda'$ are $K^\times$-homothetic, the lemma will be true for $\Lambda$ if and only if it is true for $\Lambda'$, with the same value of $m$ (since $K^\times$ is commutative.)  Hence, we may, without loss of generality, assume that $\Lambda$ is contained in $\OK$.  Since $\Lambda$ is a rank two $\Z$-submodule of $\OK$, it must have finite index in $\OK$.  We may thus choose an $N\in\Z$ such that $N\OK\subseteq\Lambda\subseteq\OK$.  Since $\OK/N\OK$ is finite and multiplication by $\epsilon$ permutes its elements, there is some positive $m\in\Z$ such that $\delta=\epsilon^m$ acts trivially on $\OK/N\OK$, and hence on $\Lambda/N\OK$.  Then $\delta$ must take $\Lambda$ to itself, so $\delta\Lambda\subseteq\Lambda$.  We must also have $\delta^{-1}\Lambda\subseteq\Lambda$, so 
$\Lambda\subseteq\delta\Lambda\subseteq\Lambda$,
and therefore $\delta\Lambda=\Lambda$. The existence of a minimal positive $m$ satisfying the conditions of the theorem follows immediately from the existence of some positive $m$.
\end{proof}

\begin{definition} Given $\xS\in Y/H$, we define $\stab{\xS}$ to be the stabilizer of $\xS$ in $\gl(2,\Z)$, and $\stabhat{\xS}$ to be the quotient
\[\frac{\stab{\xS}}{\stab{\xS}\cap\{\pm I\}}.\]
\end{definition}

\begin{definition} We will say that a subgroup $H$ of $K^\times$ is {\em unit-cofinite}
if $H\cap\OK^\times$ has finite index in $\OK^\times$.
\end{definition}

\begin{theorem}\label{stabgen}
Let $H$ be a unit-cofinite subgroup of $K^\times$, and let $\xS\in Y/H$ be represented by $y\in Y$.  Then $\stabhat{\xS}$ is a cyclic group, generated by the image of the unique element $g\in\stab{\xS}$ satisfying
\[gy=y\delta\]
where $\delta=\pm\epsilon^m$, and $m$ is smallest possible positive integer such that $\Lambda_y=\Lambda_{y\epsilon^m}$ and $\delta\in H$.
\end{theorem}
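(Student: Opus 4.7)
My plan is to translate the stabilizer $\stab{\xS}$ into a subgroup of $K^\times$, where it becomes easy to analyze using Lemma~\ref{Lemma:mL} and Dirichlet's unit theorem.

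The first step is to exhibit a group isomorphism
\[\Phi:\stab{\xS}\xrightarrow{\sim}\{\alpha\in H:\alpha\Lambda_y=\Lambda_y\},\]
sending $\gamma$ to the unique $\alpha\in H$ with $\gamma y=y\alpha$. Such an $\alpha$ exists because $\gamma$ fixes $\xS=[y]\in Y/H$, and the uniqueness of $\gamma\in\GL(2,\Q)$ with $\gamma y=y\alpha$ (already noted after the definition of $Y$) shows that $\Phi$ is well defined and injective. The identity $\Lambda_y=\Lambda_{\gamma y}=\Lambda_{y\alpha}=\alpha\Lambda_y$ (using that $\gamma\in\GL(2,\Z)$ preserves the lattice generated by the entries of $y$) forces $\alpha\Lambda_y=\Lambda_y$. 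Surjectivity goes the other way: if $\alpha\in H$ preserves $\Lambda_y$, then the entries of $y\alpha$ lie in $\Lambda_y$, so there is a unique $\gamma\in M_2(\Z)$ with $\gamma y=y\alpha$, and $\gamma$ is invertible over $\Z$ because $\alpha^{-1}$ also preserves $\Lambda_y$. The homomorphism property follows from $(\gamma_1\gamma_2)y=\gamma_1(y\alpha_2)=(y\alpha_1)\alpha_2=y\alpha_1\alpha_2$ since the $K^\times$-action is on the right.

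The second step is to describe $U:=\{\alpha\in K^\times:\alpha\Lambda_y=\Lambda_y\}$. Any such $\alpha$ is a unit in the order $\{k\in K:k\Lambda_y\subseteq\Lambda_y\}$ of $K$, hence lies in $\OK^\times=\{\pm\epsilon^k\}$. Combined with Lemma~\ref{Lemma:mL}, this gives $U=\{\pm\epsilon^{km_0}:k\in\Z\}$, where $m_0$ is the minimal positive integer with $\epsilon^{m_0}\Lambda_y=\Lambda_y$. In particular $U/\{\pm 1\}$ is infinite cyclic. Now invoke the unit-cofiniteness of $H$: since $H\cap\OK^\times$ has finite index in $\OK^\times$, the subgroup $U\cap H$ has finite index in $U$, so $(U\cap H)/(\{\pm 1\}\cap H)$ is a nontrivial cyclic subgroup of $U/\{\pm 1\}$. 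Its generator is the class of $\pm\epsilon^m$, where $m$ is the smallest positive integer that is a multiple of $m_0$ (equivalently $\Lambda_y=\Lambda_{y\epsilon^m}$) and for which $\pm\epsilon^m\in H$ for some choice of sign. Transporting this back via $\Phi^{-1}$ identifies $\stabhat{\xS}$ as cyclic, generated by the image of the $g\in\stab{\xS}$ determined by $gy=y\delta$ for $\delta=\pm\epsilon^m$ as above; $g$ itself is unique once $\delta$ is fixed, by uniqueness in $\Phi$.

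The main obstacle is really just bookkeeping: setting up $\Phi$ and verifying it is a group isomorphism onto the prescribed subgroup. Once this is done, the cyclicity and the explicit description of the generator both fall out of Lemma~\ref{Lemma:mL} plus the standard description of units in the ring of integers of a real quadratic field, with the sign ambiguity in $\delta=\pm\epsilon^m$ collapsing in the quotient $\stabhat{\xS}=\stab{\xS}/(\stab{\xS}\cap\{\pm I\})$.
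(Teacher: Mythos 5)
Your proof is correct and follows essentially the same approach as the paper's: identify each stabilizer element $\gamma\in\stab{\xS}$ with the scalar $\alpha\in H\cap\OK^\times$ satisfying $\gamma y=y\alpha$ and $\alpha\Lambda_y=\Lambda_y$, then exploit $\OK^\times=\{\pm\epsilon^k\}$ and the minimality of $m$. The only difference is organizational—you package the correspondence as an explicit isomorphism $\Phi:\stab{\xS}\to U\cap H$ and analyze that group before transporting back, whereas the paper constructs $g$ first and argues via the division algorithm that every $\eta\in\stab{\xS}$ equals $\pm g^k$.
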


\begin{remark} 
The notation in the theorem means that $\delta=\epsilon^m$ or $\delta=-\epsilon^m$, depending on which is in $H$.  If both are in $H$, then $-1\in H$ and we set $\delta=\epsilon^m$.
If $-1\notin H$, it is possible to have $-\epsilon^m\in H$ without having $\epsilon^m\in H$.  Hence, it is necessary to choose $\delta=\pm\epsilon^m$ with $m$ minimal to get a generator of $\stab{x}$.
\end{remark}

\begin{proof} Let $x$ be represented by $y=\transpose{(a,b)}\in Y$.  Choose the smallest positive $m$ such that $\epsilon^m\Lambda_y=\Lambda_y$ and one (or both) of $\epsilon^m$ and $-\epsilon^m\in H$.  Let $\delta=\pm\epsilon^m\in H$.  Then $\delta\Lambda_y=\Lambda_y$, so $y\delta$ is a basis of $\Lambda_y$.  Hence, there is some $g\in\gl(2,\Z)$ such that $gy=y\delta$.  Since $\delta\in H$, we see that $g\xS=\xS$.

We now show that every element in $\stab{\xS}$ is (up to a sign) a power of $g$.  Let $\eta\in\stab{\xS}$.  Then, since $\eta \xS=\xS$, there is some $\alpha\in H$ such that $\eta y=y\alpha$. Now $\alpha$ is an eigenvalue of $\eta$, and $\eta\in\gl(2,\Z)$, so $\alpha\in\OK^\times$.  Hence, $\alpha=\pm\epsilon^r$.  By the division algorithm and the minimality of $m$, we see that $\alpha=\pm\delta^k$ for some $k$.  Hence, $\eta=\pm g^k$.
If $\eta= g^k$ we are finished.  If $\eta=- g^k$ then $-I\in\Gamma_x$ and $\eta\equiv g^k$ modulo $\Gamma_x\cap\{\pm I\}$.
\end{proof}

Certain elements $\xS\in Y/H$ will be quite important to us; for these elements, the value of $m$ in the previous proof is determined solely by $H$.

\begin{definition} Let $H$ be a multiplicative subgroup of $K^\times$.  If $\xS\in Y/H$ can be represented by $y\in Y$ such that $\Lambda_y$ is a fractional ideal in $K$, then we say that $\xS$ is {\em idealistic}.
\end{definition}

Note that determining whether $\xS$ is idealistic does not depend on the choice of $y\in Y$ representing $\xS$.

\begin{corollary} Let $H$ be a unit-cofinite subgroup of $K^\times$.  If $\xS\in Y/H$ is idealistic, the value of $m$ in Theorem~\ref{stabgen} is equal to the smallest positive integer $k$ such that $\pm\epsilon^k\in H$.
\end{corollary}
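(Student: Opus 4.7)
The plan is to exploit the fact that fractional ideals are invariant under multiplication by every unit of $\OK^\times$, so the idealistic hypothesis forces the first condition in Theorem~\ref{stabgen} (namely $\epsilon^m\Lambda_y = \Lambda_y$) to be trivially satisfied for every positive integer $m$. Once that condition drops out, the minimization defining $m$ reduces to minimizing over the second condition alone, which is precisely the smallest positive $k$ with $\pm\epsilon^k \in H$.

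More concretely, I would argue as follows. Since $\xS$ is idealistic, pick a representative $y\in Y$ such that $\Lambda_y$ is a fractional ideal of $\OK$. For any $k \in \Z$, the element $\epsilon^k$ is a unit of $\OK$, so $\epsilon^k \OK = \OK$, and multiplying a fractional ideal by a unit produces the same fractional ideal: $\epsilon^k \Lambda_y = \Lambda_y$. Hence the constraint $\Lambda_y = \Lambda_{y\epsilon^k}$ in the statement of Theorem~\ref{stabgen} is automatic.

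Therefore $m$, which by that theorem is the smallest positive integer satisfying both $\epsilon^m\Lambda_y = \Lambda_y$ and $\pm\epsilon^m \in H$, is just the smallest positive integer $k$ with $\pm\epsilon^k \in H$. Note that such a $k$ exists because $H$ is unit-cofinite: $H \cap \OK^\times$ has finite index in $\OK^\times = \{\pm 1\}\times \langle\epsilon\rangle$, which forces some positive power of $\epsilon$ (possibly multiplied by $-1$) to lie in $H$.

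There is really no main obstacle here; the corollary is essentially an immediate consequence of the definition of ``idealistic'' combined with the theorem, and the whole content is the one-line observation that units act trivially on fractional ideals. The only mild subtlety is to remark that the existence of at least one such $k$ follows from the unit-cofiniteness assumption, so the minimum is taken over a nonempty set.
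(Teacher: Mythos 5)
Your proof is correct and takes the natural approach: the paper states this corollary without proof, evidently because, as you observe, the idealistic hypothesis makes the lattice-stability condition in Theorem~\ref{stabgen} automatic (a fractional ideal is an $\OK$-module, hence invariant under multiplication by any unit), so the minimization collapses to the membership condition $\pm\epsilon^k\in H$, and the unit-cofiniteness of $H$ guarantees the minimum is over a nonempty set.
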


\begin{definition}\label{D:generator} Let $H$ be a unit-cofinite subgroup of $K^\times$.  For $\xS\in Y/H$, denote the positive integer $m$ described in Theorem~\ref{stabgen} by $m_x$, and the 
element $g$ described in Theorem~\ref{stabgen} by $g_x$.
\end{definition}

\begin{corollary}\label{gammaxcor} Let $H$ be a unit-cofinite subgroup of $K^\times$.  If $x,x'\in Y/H$ are in the same $\GL(2,\Z)$-orbit, then  $m_x=m_{x'}$.
\end{corollary}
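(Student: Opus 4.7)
The plan is to reduce the corollary to the observation that the integer $m_x$, as defined through Theorem~\ref{stabgen}, depends only on the lattice $\Lambda_y$ and on the group $H$, not on the particular $y\in Y$ representing $x$. Recall from Theorem~\ref{stabgen} that $m_x$ is characterized as the smallest positive integer $m$ such that $\epsilon^m\Lambda_y=\Lambda_y$ and one of $\pm\epsilon^m$ lies in $H$. The second condition manifestly involves neither $x$ nor $y$. So the whole task is to show the first condition is invariant under replacing $y$ by $\gamma y$ for some $\gamma\in\GL(2,\Z)$.

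First I would pick a representative $y\in Y$ of $x$ and set $y'=\gamma y$, which represents $x'=\gamma x$. Since $\gamma\in\GL(2,\Z)$ acts by a unimodular change of basis on the pair of entries of $y$, the $\Z$-module generated by the entries of $\gamma y$ equals the $\Z$-module generated by the entries of $y$; that is, $\Lambda_{y'}=\Lambda_y$. Consequently the condition $\epsilon^m\Lambda_{y'}=\Lambda_{y'}$ is identical to $\epsilon^m\Lambda_y=\Lambda_y$, and the conditions defining $m_{x'}$ coincide with those defining $m_x$. Hence $m_x=m_{x'}$.

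For completeness I would also remark that the definition is unambiguous even at the level of $x\in Y/H$: if $y$ and $\tilde y=y\alpha$ (for $\alpha\in H$) both represent $x$, then $\Lambda_{\tilde y}=\alpha\Lambda_y$, and since $K^\times$ is commutative, $\epsilon^m\Lambda_{\tilde y}=\alpha\epsilon^m\Lambda_y$ equals $\Lambda_{\tilde y}=\alpha\Lambda_y$ if and only if $\epsilon^m\Lambda_y=\Lambda_y$. So the characterizing condition on $m$ really only sees the $H$-homothety class of the lattice, a fact which also follows by combining Theorem~\ref{stabgen} with the preceding lemma that identifies $\GL(2,\Z)$-orbits in $Y/H$ with $H$-homothety classes of lattices.

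There is essentially no obstacle here; the only thing to be careful about is distinguishing the three kinds of ambiguity — changing $y$ by $\GL(2,\Z)$ on the left, changing $y$ by $H$ on the right, and the sign ambiguity in $\delta=\pm\epsilon^m$ — and verifying that none of them affects the defining conditions on $m$. The result is then immediate from Theorem~\ref{stabgen}.
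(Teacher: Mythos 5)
Your proof is correct, but it takes a genuinely different route from the paper's. The paper argues via the stabilizer subgroups: if $x=\gamma x'$ then $\Gamma_{x}$ and $\Gamma_{x'}$ are conjugate in $\GL(2,\Z)$, so the generators $g_x$ and $g_{x'}$ from Theorem~\ref{stabgen} are conjugate matrices, hence have the same eigenvalues $\{\delta,\bar\delta\}$ with $\delta=\pm\epsilon^m$; equality of these eigenvalue sets forces $m_x=m_{x'}$. Your argument instead bypasses the stabilizers entirely and goes straight to the characterization of $m_x$ as the least positive $m$ with $\epsilon^m\Lambda_y=\Lambda_y$ and $\pm\epsilon^m\in H$, observing that the first condition is a statement purely about the lattice $\Lambda_y$ and that $\Lambda_{\gamma y}=\Lambda_y$ for $\gamma\in\GL(2,\Z)$ (a unimodular change of basis). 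Both are valid; yours is arguably more elementary and self-contained, since it avoids having to know that conjugate matrices have the same eigenvalues and then deduce $m_x=m_{x'}$ from equality of $\{\pm\epsilon^{m},\pm\epsilon^{-m}\}$-type eigenvalue sets, whereas the paper's proof is shorter on the page and reuses the group-theoretic framing ($g_x$, $\Gamma_x$) that the surrounding sections depend on. Your closing remark distinguishing the three ambiguities (left $\GL(2,\Z)$, right $H$, and the sign in $\delta$) is a nice sanity check and is not in the paper.
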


\begin{proof} If $x=\gamma x'$ for $\gamma\in\GL(2,\Z)$, then $\Gamma_{x'}=\gamma\Gamma_x\gamma^{-1}$, so $g_x$ is conjugate to $g_{x'}$.  Then $g_x$ and $g_{x'}$ have the same eigenvalues, so $x$ and $x'$ have the same value of $m$.
\end{proof}

Assume that $H$ does not contain $-1$.  In this case, $\stab{x}$ does not contain $-I$, so $\stabhat{x}=\stab{x}$ is cyclic, generated by $g_x$.  Then, there is a canonical isomorphism
\[I_x:H_1(\stab{x},\F)\to\stab{x}\otimes_\Z\F.\]

\begin{definition}\label{Def:zx}
If $-1\notin H$, and $\xS\in Y/H$, define $z_x$ to be the generator of $H_1(\stab{x},\F)$ such that $I_x(z_x)=g_x\otimes 1$.
\end{definition}

\section{$(S,q)$-homotheties}

From now on, we fix a field $\F$ of characteristic not equal to $2$.

\begin{definition} Let $y=\transpose{(a,b)}\in Y$.  We define an injective homomorphism $r_y:K^\times\to\gl(2,\Q)$ by
\[r_y(c)\column{a}{b}=\column{ac}{bc}\]
for $c\in K^\times$.
\end{definition}




\begin{definition} Let $M$ be any positive integer.  Define $S_0(M)$ to be the largest subgroup of $\GL(2,\Q)$ that can be mapped modulo $M$ to $\gl(2,\Z/M\Z)$.  Define $S(M)$ to be the kernel of reduction modulo $M$ from $S_0(M)$ to $\gl(2,\Z/M\Z)$.
\end{definition}

We note that for any $M$, $S_0(M)$ contains $\gl(2,\Z)$.
  
\begin{definition}\label{D:S_0} Recall that $d=\disc(K)$.  If the characteristic of $\F$ is nonzero, set $p$ equal to the characteristic of $F$; otherwise, set $p=1$.  Fix positive integers $M,N$ with $M\geq 3$ such that $M\mid pdN$.  Define $S_0=S_0(pdN)$ and $S=S(M)\cap S_0(pdN)$.
\end{definition}

Since $S$ is the kernel of the composition
\[S_0\to\gl(2,\Z/pdN\Z)\to\gl(2,\Z/M\Z),\]
we see that $S$ has finite index in $S_0$.  Further, since $M\geq3$, $-I\notin S$.

\begin{definition}  Recall that $\basepointy=\transpose{(\omega,1)}\in Y$. Define
\[K_{S_0}=\{c\in K^\times:r_\basepointy(c)\in S_0\}\]
and
\[K_S=\{c\in K^\times:r_\basepointy(c)\in S\}.\]
We note that $K_{S_0}$ and $K_S$ are multiplicative subgroups of $K^\times$.
\end{definition}

Since $S\subseteq S_0$ has finite index, it is clear that $K_S\subseteq K_{S_0}$ with finite index.  

\begin{lemma}
If $\alpha\in K_{S_0}$, then $\alpha$ is relatively prime to $pdN$.
\end{lemma}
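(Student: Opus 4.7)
The plan is to unravel the definition of $S_0 = S_0(pdN)$ locally and apply it to an explicit description of $r_\basepointy(\alpha)$. Reformulated in local terms, a matrix $g \in \GL(2,\Q)$ lies in $S_0(pdN)$ if and only if, for every rational prime $\ell$ dividing $pdN$, the entries of $g$ lie in $\Z_{(\ell)}$ and $\det g \in \Z_{(\ell)}^{\times}$; this is just what it means for $g$ to have a well-defined reduction into $\GL(2,\Z/pdN\Z)$.

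Next I would compute $r_\basepointy(\alpha)$ explicitly. By construction, $r_\basepointy(\alpha)$ is the matrix of multiplication-by-$\alpha$ on $K$ relative to the ordered $\Q$-basis $(\omega,1)$. Writing $\alpha = a + b\omega$ with $a,b \in \Q$ and using the minimal polynomial $x^{2}+Bx+C$ of $\omega$ over $\Q$ to expand $\omega^{2} = -C - B\omega$, the identity $r_\basepointy(\alpha)\,\basepointy = \basepointy\,\alpha$ forces
\[
r_\basepointy(\alpha) = \begin{pmatrix} a - bB & -bC \\ b & a \end{pmatrix}, \qquad \det r_\basepointy(\alpha) = a^{2} - abB + b^{2}C = N_{K/\Q}(\alpha).
\]
The crucial features are that $a$ and $b$ appear directly as entries (in the bottom row), and that the determinant is exactly the norm of $\alpha$.

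Putting the two together, suppose $\alpha \in K_{S_0}$ and fix a prime $\ell \mid pdN$. Integrality of the entries of $r_\basepointy(\alpha)$ at $\ell$ forces $a, b \in \Z_{(\ell)}$, hence $\alpha \in \OK \otimes_\Z \Z_{(\ell)}$; in particular $v_{\mathfrak{l}}(\alpha) \geq 0$ for every prime $\mathfrak{l}$ of $\OK$ above $\ell$. The unit condition on the determinant then gives $v_{\ell}(N_{K/\Q}(\alpha)) = 0$, and combined with the norm formula $v_{\ell}(N_{K/\Q}(\alpha)) = \sum_{\mathfrak{l} \mid \ell} f_{\mathfrak{l}}\, v_{\mathfrak{l}}(\alpha)$, a sum of non-negative integers, this forces $v_{\mathfrak{l}}(\alpha) = 0$ at every such $\mathfrak{l}$. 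Letting $\ell$ range over all primes dividing $pdN$, the fractional ideal $(\alpha)\subseteq K$ is coprime to $pdN\,\OK$, i.e.\ $\alpha$ is relatively prime to $pdN$.

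There is no substantive obstacle here; the only point that needs care is fixing the precise meaning of ``relatively prime to $pdN$'' for a general $\alpha \in K^{\times}$, namely the ideal-theoretic statement that $v_{\mathfrak{l}}(\alpha) = 0$ for every prime $\mathfrak{l}$ of $\OK$ dividing $pdN\,\OK$. Once that interpretation is in place, the proof reduces to the direct matrix calculation above together with the local description of $S_0$.
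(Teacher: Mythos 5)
Your proof is correct and follows essentially the same approach as the paper: show that $\alpha$ is $\ell$-integral for each prime $\ell\mid pdN$ (the paper via a clear-denominators argument on $\alpha=\beta/n$, you via reading $a,b$ off as entries of $r_\basepointy(\alpha)$), then use $\det r_\basepointy(\alpha)=N_{K/\Q}(\alpha)$ together with the unit-determinant condition to conclude that $\alpha$ has trivial valuation at every prime above $pdN$. Your prime-by-prime local formulation with valuations is somewhat cleaner than the paper's ``canceling repeatedly'' step, but the underlying idea is identical.
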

\begin{proof} Since $\alpha\in K$, we may write $\alpha=\beta/n$ with $\beta\in\OK$ and $n\in\Z$.  Since $\beta\in\OK$, we see that $r_\basepointy(\beta)$ has integer entries.  If a prime $b|dN$ and $b|n$, then, since the entries of $r_\basepointy(\alpha)=\frac1nr_\basepointy(\beta)$ can have no denominators divisible by $b$,  it must be the case that $b$ divides every entry of $r_\basepointy(\beta)$.  This implies that $b|\beta$ in $\OK$.  Canceling (repeatedly, if needed), we may take $n$ to be relatively prime to $pdN$.

Now $N^K_\Q(\alpha)=\det(r_\basepointy(\alpha))=\det(r_\basepointy(\beta))/n^2=N^K_\Q(\beta)/n^2$ must be relatively prime to $pdN$, so $N^K_\Q(\beta)$ is also relatively prime to $pdN$.  Since $\beta\in\OK$, it must then be relatively prime to $pdN$, and so $\alpha$ is as well.
\end{proof}

\begin{lemma} $K_S$ is a unit-cofinite subgroup of $K^\times$.  
\end{lemma}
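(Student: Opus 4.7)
The plan is to exhibit $K_S \cap \OK^\times$ as the kernel of a homomorphism from $\OK^\times$ into a finite group, which immediately gives finite index.

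First, I would observe that $r_\basepointy$ sends $\OK^\times$ into $\GL(2,\Z)$. Indeed, if $u \in \OK^\times$ and $\basepointy = \transpose{(\omega,1)}$, then $r_\basepointy(u)\basepointy = \transpose{(u\omega, u)}$. Since $\OK = \Z[\omega]$ and $u\OK = \OK$, the pair $u\omega, u$ forms a $\Z$-basis of $\OK$, so the change-of-basis matrix $r_\basepointy(u)$ lies in $\GL(2,\Z)$. In particular $r_\basepointy(\OK^\times) \subseteq \GL(2,\Z) \subseteq S_0$.

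Next, consider the composition
\[
\OK^\times \xrightarrow{r_\basepointy} \GL(2,\Z) \longrightarrow \GL(2,\Z/M\Z),
\]
which is a group homomorphism into the finite group $\GL(2,\Z/M\Z)$. By definition, $u \in K_S$ iff $r_\basepointy(u) \in S = S(M) \cap S_0(pdN)$, i.e.\ iff $r_\basepointy(u)$ reduces to the identity modulo $M$ (the $S_0(pdN)$ condition is automatic since $r_\basepointy(u) \in \GL(2,\Z)$). Hence $K_S \cap \OK^\times$ is precisely the kernel of the displayed homomorphism.

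Since the target is finite, the kernel has finite index in $\OK^\times$, which is exactly the unit-cofinite condition. There is no real obstacle here; the only point that needs to be checked carefully is the first one, namely that the image of an integral unit under $r_\basepointy$ is actually an integer matrix, which is where the choice of $\basepointy = \transpose{(\omega,1)}$ (with $\OK = \Z[\omega]$) plays its role.
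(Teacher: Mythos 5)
Your proof is correct and rests on essentially the same idea as the paper's: map $\OK^\times$ into $\GL(2,\Z)$ via $r_\basepointy$, reduce modulo $M$ to land in the finite group $\GL(2,\Z/M\Z)$, and exploit finiteness. The paper phrases this by finding a power of the fundamental unit $\epsilon$ that reduces to the identity, whereas you identify $K_S\cap\OK^\times$ directly as the kernel of the composite homomorphism, but this is a cosmetic rather than a substantive difference.
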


\begin{proof} We note that $r_\basepointy(\epsilon)\in\gl(2,\Z)$.  Hence, $r_\basepointy(\epsilon)$ can be reduced modulo $M$ to give a matrix in $\gl(2,\Z/M\Z)$.  Since $\gl(2,\Z/M\Z)$ is finite, there is some positive integer $k$ such that 
\[r_\basepointy(\epsilon)^k=r_\basepointy(\epsilon^k)\in S(M).\]
Since $r_\basepointy(\epsilon)$ is also in $S_0$, we see that $\epsilon^k\in K_S$, so $K_S$ is unit-cofinite.
\end{proof}

\begin{definition}\label{D:q*} Let $Z$ denote the set of scalar matrices $\zeta_r=rI$ where $r\in \Q^\times\cap K_{S_0}$.  
Let $\theta:\Z\to\F^\times$ 
be the quadratic Dirichlet character cutting out $K$, and define $q:Z\to\F^\times$ by $q(rI)=\theta(r)$ for $r\in\Z\cap K_{S_0}$, extended multiplicatively to $Z$.

We define $ q^*:K_{S_0}\to\F^\times$ to be the composition of the following multiplicative maps:
\begin{enumerate}[(1)]
\item The map taking $a\in K_{S_0}$ to the principal fractional ideal $(a)\subset K$,
\item The map taking a fractional ideal to its prime factorization,
\item The map taking a product of powers of prime ideals to the subproduct of powers of inert prime ideals,
\item The map taking an inert prime ideal $(\ell)$ to $q(\ell I)$.
\end{enumerate}

We note that $q^*$ is a homomorphism, and we define $\KSQ$ to be the kernel of $q^*|_{K_S}$.  Then $\KSQ$ is a multiplicative subgroup of $K^\times$ and $\KSQ$ has index $2$ in $K_S$.
\end{definition}

\begin{lemma}\label{L:unit-cofinite} 
$\KSQ$ is a unit-cofinite subgroup of $K^\times$.  
\end{lemma}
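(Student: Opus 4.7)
The plan is to reduce directly to the previous lemma (that $K_S$ is unit-cofinite) using the relation between $\KSQ$ and $K_S$. Two routes are available, and I would favor the cleaner one.

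The cleaner route is the observation that $q^*$ is automatically trivial on units. If $\alpha\in\OK^\times$ lies in $K_S$, then the principal fractional ideal $(\alpha)=\OK$ has empty prime factorization, and each of the four multiplicative maps in Definition~\ref{D:q*} sends the empty product to the identity. Hence $q^*(\alpha)=1$, so $\alpha\in\KSQ$. This shows $K_S\cap\OK^\times\subseteq\KSQ$, and combined with $\KSQ\subseteq K_S$ gives the equality $\KSQ\cap\OK^\times=K_S\cap\OK^\times$. Since the right side has finite index in $\OK^\times$ by the previous lemma, so does the left, and $\KSQ$ is unit-cofinite.

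If one does not wish to invoke the empty-product convention, the same conclusion follows from abstract nonsense: $\KSQ$ has index $2$ in $K_S$, so $[K_S\cap\OK^\times:\KSQ\cap\OK^\times]\leq 2$, and multiplicativity of indices together with the previous lemma gives $[\OK^\times:\KSQ\cap\OK^\times]<\infty$.

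I do not anticipate any real obstacle; the substantive work on units was already done in the preceding lemma, and this statement essentially records the fact that cutting $K_S$ down by a finite-index (indeed index-$2$) kernel condition does not disturb unit-cofiniteness. The only conceptual point worth being explicit about is that $q^*$ evaluated on a unit is the empty product $1\in\F^\times$, which is why the $q^*$-condition imposes no restriction at all on $\OK^\times\cap K_S$.
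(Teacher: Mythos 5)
Your primary (``cleaner'') route is exactly the paper's own argument: the paper observes that any unit in $K_S$ lies in the kernel of $q^*$ (since the associated principal fractional ideal is $\OK$, with empty factorization), hence in $\KSQ$, and the result follows from the unit-cofiniteness of $K_S$. Your alternative index-$2$ argument is a fine fallback but unnecessary; the proposal is correct and matches the paper's proof.
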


\begin{proof} 
This is true because  $K_S$ is unit-cofinite, and any unit in $K_S$ is in the kernel of $q^*$, and so in $\KSQ$.
\end{proof}

\begin{lemma}\label{Lemma:q*-q}
For $r\in\Q^\times\cap K_{S_0}$, $q(rI)=q^*(r)$.
\end{lemma}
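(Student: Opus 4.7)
The plan is to compute both $q(rI)$ and $q^*(r)$ explicitly in terms of the rational prime factorization of $r$ and observe that they agree on the nose. First, I would invoke the preceding lemma to conclude that any $r\in\Q^\times\cap K_{S_0}$ must be coprime to $pdN$, hence in particular coprime to the discriminant $d$. Writing $r=\pm\prod_\ell \ell^{v_\ell(r)}$ as a finite product of rational prime powers (with possibly negative exponents), every $\ell$ appearing has $\ell\nmid d$, so $\ell$ is unramified in $K$, and $\theta(\ell)$ is $+1$ if $\ell$ splits and $-1$ if $\ell$ is inert. (The sign $\pm 1$ is harmless since $\theta(\pm 1)=1$ and $(\pm 1)=(1)$.)

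Next, for the left-hand side, the multiplicative extension in the definition of $q$ gives
\[
q(rI)=\prod_\ell q(\ell I)^{v_\ell(r)}=\prod_\ell \theta(\ell)^{v_\ell(r)}.
\]
For the right-hand side, factor the principal fractional ideal $(r)\subset K$: for each rational prime $\ell$ appearing in $r$, either $(\ell)=\mathfrak p\bar{\mathfrak p}$ in $\OK$ (if $\ell$ splits) or $(\ell)$ is itself a prime ideal of $\OK$ (if $\ell$ is inert). Since $r$ is prime to $d$, no ramified primes appear. Applying the four steps of the definition of $q^*$ and recalling that only inert prime ideals survive step (3), we obtain
\[
q^*(r)=\prod_{\ell\text{ inert}}q(\ell I)^{v_\ell(r)}=\prod_{\ell\text{ inert}}\theta(\ell)^{v_\ell(r)}.
\]

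Finally I would compare the two expressions. The only difference is that $q(rI)$ includes factors $\theta(\ell)^{v_\ell(r)}$ for split primes $\ell$, while $q^*(r)$ does not. But for every split prime $\ell$, $\theta(\ell)=1$, so these extra factors are all $1$ and drop out, yielding $q(rI)=q^*(r)$.

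There is no real obstacle here; the only thing to be careful about is to justify that no ramified prime contributes (so that $\theta(\ell)\ne 0$ and the multiplicative extension is actually well-defined on the rational number $r$), which is exactly what the earlier coprimality lemma supplies.
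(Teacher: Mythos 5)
Your proof is correct and follows essentially the same route as the paper's: factor $r$ into rational prime powers, observe that ramified primes do not appear (by the coprimality lemma), and note that the split-prime factors drop out because $\theta(\ell)=1$ there, leaving exactly the inert product that defines $q^*(r)$. Your handling of the sign via $\theta(-1)=1$ is a small point the paper leaves implicit, but otherwise the two arguments coincide.
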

\begin{proof}
Let 
\[r=\prod_{\ell\text{ prime}}\ell^{n_\ell}.\]
Then 
\[q(rI)=\theta(r)=\prod_{\ell\text{ prime}}\theta(\ell)^{n_\ell}=\prod_{\ell\text{ inert in $K$}}\theta(\ell)^{n_\ell}=\prod_{\ell\text{ inert in $K$}} q(\ell I)^{n_\ell}=q^*(r).\]
since $n_\ell=0$ for $\ell$ ramified in $K/\Q$, and $\theta(\ell)=1$ for $\ell$ split in $K$.
\end{proof} 

Let $\XSQ=Y/\KSQ$.  Then $\F Y$ is an $(S_0,\KSQ)$-bimodule, and we obtain an isomorphism
\[\F Y\otimes_{\KSQ}\F\cong \F\XSQ\otimes \F=\F\XSQ\]
of $S_0$-modules.

\begin{lemma}\label{L:X^S-stab}
Let $\xS\in\XSQ$ be represented by $y\in Y$.  Then
\begin{enumerate}[(a)]
\item $\stab{\xS}=\{r_y(c):c\in\KSQ\}\cap\gl(2,\Z).$
\item If $g\in\stab{\xS}$, then $g=r_y(c)$ for some $c\in\OK^\times$.
\item $-I\notin \stab{x}$ and $\stab{\xS}$ is infinite cyclic.
\end{enumerate} 
\end{lemma}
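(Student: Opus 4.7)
The plan is to derive all three parts from one observation: a matrix $\gamma \in \GL(2,\Q)$ is uniquely determined by its effect on $y$, because the two entries $a, b$ of $y = \transpose{(a,b)}$ are $\Q$-linearly independent in $K$. Consequently, $g \in \stab{x}$ precisely when $g \in \gl(2,\Z)$ and $gy = yc$ for some $c \in \KSQ$, and this equation forces $g = r_y(c)$. For the converse, any $g \in \gl(2,\Z)$ of the form $r_y(c)$ with $c \in \KSQ$ plainly satisfies $gy = yc$, hence fixes $x$. This yields (a) at once, and records the useful fact that the scalar $c$ attached to $g$ is unique.

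For (b), once $g = r_y(c)$ with $c \in \KSQ$ is in hand from (a), I would observe that $r_y(c)$ is the matrix of multiplication by $c$ on $K$ viewed as a two-dimensional $\Q$-vector space in the basis $\{a, b\}$. Its characteristic polynomial is therefore $X^2 - \tr^K_\Q(c) X + N^K_\Q(c)$, with the usual convention when $c \in \Q$. Because $g$ lies in $\gl(2,\Z)$, these coefficients are rational integers, and the constant term equals $\det g = \pm 1$. Hence $c$ is an algebraic integer of norm $\pm 1$, i.e., $c \in \OK^\times$.

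For (c), the first task is to rule out $-I \in \stab{x}$. Since $M \geq 3$, the matrix $-I$ does not reduce to $I$ modulo $M$, so $-I \notin S$, whence $-1 \notin K_S$, and in particular $-1 \notin \KSQ$. If $-I$ did lie in $\stab{x}$, then by (a) together with $r_y(-1) = -I$ and the injectivity of $r_y$ we would conclude $-1 \in \KSQ$, a contradiction. Therefore $\stab{x} \cap \{\pm I\} = \{I\}$, so $\stabhat{x} = \stab{x}$. Theorem~\ref{stabgen} then supplies a generator $g_x = r_y(\pm \epsilon^m)$ of $\stab{x}$ for some $m \geq 1$, and since $\pm \epsilon^m$ has infinite order in $K^\times$ and $r_y$ is injective, $g_x$ has infinite order, so $\stab{x}$ is infinite cyclic.

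Nothing here is genuinely hard; the lemma is a bookkeeping consequence of the uniqueness property of $r_y$, Theorem~\ref{stabgen}, and Definition~\ref{D:S_0}. The only subtle point is in (c), where one must remember that the hypothesis $M \geq 3$ in Definition~\ref{D:S_0} was imposed precisely to guarantee $-I \notin S$, and hence $-1 \notin \KSQ$; without this, $\stab{x}$ could contain $-I$ and fail to be cyclic.
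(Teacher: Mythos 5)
Your proof is correct and follows essentially the same route as the paper: parts (a) and (b) hinge on the uniqueness of $r_y(c)$ forced by the $\Q$-linear independence of the entries of $y$, and the characteristic-polynomial argument for $c\in\OK^\times$; part (c) is the same appeal to Theorem~\ref{stabgen}, Lemma~\ref{L:unit-cofinite}, and $-1\notin\KSQ$, which you have merely spelled out in more detail (including tracing $-1\notin\KSQ$ back to $M\geq3$).
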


\begin{proof}
(a)
 Suppose $g\in\stab{x}$.  Then we have that $gy=yc$ for some $c\in \KSQ$.  Since $yc=r_y(c)y$, and the entries of $y$ are a $\Q$-basis of $K$, we see that $g=r_y(c)$.  Hence, $g$ is in the given intersection, and any $g$ in the given intersection fixes $\xS$.

(b)  Let $g\in\stab{x}$.  Then $g=r_y(c)$, for some $c\in\KSQ$, and the characteristic polynomial of $g$ is the same as that of multiplication by $c$ on $K$.  Since $g\in\gl(2,\Z)$, we see that $c\in \OK^\times$.

(c) This follows from Theorem~\ref{stabgen}, Lemma~\ref{L:unit-cofinite}, and the fact that $-1\notin K_{S,q}$.
\end{proof}

Denote by $\basepoint$ the image in $\XSQ$ of $\basepointy$.

\begin{definition}
 Define $i^S=m_{\basepoint}$.
\end{definition}

\begin{lemma}\label{is}
Let $\xS\in\XSQ$.  
\begin{enumerate}[(i)]
\item For any $x$, $i^S\mid m_x$.
\item If $x$ is idealistic, then $m_x=i^S$.
\end{enumerate}
\end{lemma}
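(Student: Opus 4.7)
The plan is to reduce both parts to the corollary immediately following the definition of ``idealistic,'' which asserts that for idealistic $x \in Y/H$ with $H$ unit-cofinite, $m_x$ equals the smallest positive integer $k$ with $\pm\epsilon^k \in H$.

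First I would compute $i^S$ explicitly. Since $\basepointy = \transpose{(\omega,1)}$ satisfies $\Lambda_\basepointy = \Z[\omega] = \OK$, which is a fractional ideal, the class $\basepoint$ is idealistic. Applying the cited corollary (with $H = \KSQ$, which is unit-cofinite by Lemma~\ref{L:unit-cofinite}) gives that $i^S = m_{\basepoint}$ equals the smallest positive integer $n$ with $\pm\epsilon^n \in \KSQ$. Part (ii) then follows at once from the same corollary applied directly to any idealistic $x$: $m_x = n = i^S$.

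For part (i), let $x \in \XSQ$ be arbitrary. By Theorem~\ref{stabgen}, $m_x$ must satisfy $\pm\epsilon^{m_x} \in \KSQ$, hence lies in the set $B := \{k \in \Z : \pm\epsilon^k \in \KSQ\}$. I would observe that $B$ is a subgroup of $\Z$, being the preimage of the trivial element under the homomorphism $\Z \to K^\times/(\pm\KSQ)$ sending $k \mapsto \epsilon^k$, where $\pm\KSQ := \KSQ \cup (-\KSQ)$ is itself a subgroup of the abelian group $K^\times$ (closure under multiplication and inversion is clear). Thus $B = n\Z$, and therefore $i^S = n$ divides $m_x$.

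There is no serious obstacle; the argument simply unpacks the definitions and invokes the previously established corollary. The only point requiring care is the sign ambiguity: since $-1 \notin \KSQ$, one must work with $\pm\KSQ$ rather than $\KSQ$ itself when describing the set of admissible exponents, but this does not affect the subgroup structure of $B$.
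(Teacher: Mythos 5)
Your proof is correct. The reduction of part~(ii) to the corollary following Theorem~\ref{stabgen} is exactly the observation one needs, and part~(i) via the subgroup $B=\{k\in\Z:\pm\epsilon^k\in\KSQ\}$ is sound (the observation that $\pm\KSQ$ is a subgroup, hence $B$ is a subgroup of $\Z$ of the form $n\Z$, is the key point, and you handle the sign issue cleanly).

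Your route differs in organization from the paper's. The paper does not invoke the corollary at all in part~(i); instead, for each $\xi\in\XSQ$ it defines an injective homomorphism $\phi_\xi:\stab{\xi}\to\KSQ$ by $g=r_\xi(\phi_\xi(g))$, whose image is the cyclic group generated by $\delta_\xi=\pm\epsilon^{m_\xi}$. Using Lemma~\ref{L:X^S-stab} it then shows that the image of $\phi_x$ is contained in the image of $\phi_{\basepoint}$ (with equality when $x$ is idealistic because $r_y(c)\in\gl(2,\Z)$ for all units $c$ in that case), and reads off the divisibility by comparing generators of nested cyclic groups. Your approach instead pushes everything down to arithmetic in $\Z$: you identify $i^S$ concretely via the idealistic corollary and observe that $m_x$ is constrained to lie in the subgroup $B=i^S\Z$. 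The two arguments are morally equivalent (both ultimately exploit the cyclic structure of the set of admissible exponents), but yours is a bit more streamlined because it leans on the corollary directly, whereas the paper proves part~(i) without reference to part~(ii). The paper's version has the minor advantage of making explicit the stabilizer-group picture that recurs later in Lemma~\ref{H5}; yours has the advantage of brevity and of making the role of the corollary transparent.
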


\begin{proof}
(i)
For any $\xi\in \XSQ$, let $\phi_\xi:\Gamma_\xi \to \KSQ$ be the injective homomorphism defined by
$g=r_\xi(\phi_\xi(g))$ for $g\in\Gamma_\xi$.  Then the image of $\phi_\xi$ is generated by $\epsilon^{m_\xi}$.
 
 Now let $\xS\in\XSQ$ be represented by $y\in Y$. We have seen that any element of $\stab{\xS}$ is of the form $r_y(c)$ for some $c\in\OK^\times$.
 From Lemma~\ref{L:X^S-stab}, we see that for $c\in\OK^\times$,
\[r_y(c)\in\stab{\xS}\implies c\in \KSQ\iff r_{\basepoint}(c)\in\stab{\basepoint}.\]
Hence, the image of $\phi_x$ is contained in the image of $\phi_\basepoint$.  Since both images are cyclic groups, we see that $i^S|m_x$.

(ii)  Since $x$ is idealistic, $\Lambda_y$ is a fractional ideal of $K$.  Hence, $r_y(c)\in\GL(2,\Z)$ for all $c\in\OK^\times$.  Therefore, we see that for $c\in\OK^\times$,
\[r_y(c)\in\stab{x}\iff c\in K_{S,q}\iff r_{\basepoint}(c)\in\stab{\basepoint}.\]
Clearly, then, $m_x=i^S$.
\end{proof}

\begin{definition}\label{D:mx'} 
For $\xS\in\XSQ$, set $m_x'=m_x/i^S$.
\end{definition}

Finally, we prove the following lemma about the relationship between elements of $Z$ and elements of $\KSQ$.

\begin{lemma}\label{Lemma:Z-KSQ}
Let $\zeta=rI\in\Z$ with $r\in\Q^\times\cap K_{S_0}$, let $\alpha\in\KSQ$, and let $y\in Y$.  If $\zeta r_y(\alpha)\in\gl(2,\Z)$, then $q(\zeta)=1$.
\end{lemma}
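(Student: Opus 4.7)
The plan is to argue that the hypothesis forces $r\alpha\in\OK^\times$; since a unit has trivial ideal factorization, $q^*(r\alpha)=1$, and the conclusion will then follow from multiplicativity of $q^*$, Lemma~\ref{Lemma:q*-q}, and the defining property of $\KSQ$.

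First, I would rewrite the hypothesis algebraically. Because $\zeta=rI=r_y(r)$ (multiplication by a rational scalar acts as $rI$ in any basis of $K$) and $r_y$ is multiplicative, $\zeta\,r_y(\alpha)=r_y(r\alpha)$. The hypothesis therefore becomes $r_y(r\alpha)\in\gl(2,\Z)$, which says precisely that multiplication by $r\alpha$ is a $\Z$-linear automorphism of the lattice $\Lambda_y$; equivalently $(r\alpha)\Lambda_y=\Lambda_y$. Hence $r\alpha$ is a unit in the endomorphism order $\{c\in K:c\Lambda_y\subseteq\Lambda_y\}$ of $\Lambda_y$, which is a $\Z$-order in $K$ and therefore contained in the maximal order $\OK$. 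It follows that $r\alpha\in\OK^\times$. This is the one substantive step and essentially the only place I expect to have to think; what follows is bookkeeping.

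Next, since $r\alpha$ is a unit, the principal fractional ideal $(r\alpha)$ equals $\OK$, whose prime factorization is empty, so tracing through Definition~\ref{D:q*} gives $q^*(r\alpha)=1$. (Note that $q^*$ is defined at $r\alpha$, since $r\in K_{S_0}$ by hypothesis and $\alpha\in\KSQ\subseteq K_S\subseteq K_{S_0}$, so $r\alpha\in K_{S_0}$.) Multiplicativity of $q^*$ then yields $1=q^*(r\alpha)=q^*(r)\cdot q^*(\alpha)$. Lemma~\ref{Lemma:q*-q} identifies $q^*(r)$ with $q(rI)=q(\zeta)$, and the hypothesis $\alpha\in\KSQ=\ker(q^*|_{K_S})$ gives $q^*(\alpha)=1$. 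Therefore $q(\zeta)=1$, as required.
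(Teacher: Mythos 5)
Your proposal is correct and follows essentially the same route as the paper: reduce the hypothesis to $r_y(r\alpha)\in\gl(2,\Z)$, deduce $r\alpha\in\OK^\times$, get $q^*(r\alpha)=1$, and finish with multiplicativity of $q^*$ together with Lemma~\ref{Lemma:q*-q}. The only (minor) divergence is in showing $r\alpha\in\OK^\times$: you use the endomorphism-order of $\Lambda_y$, whereas the paper observes that the characteristic polynomial of $r_y(r\alpha)$ is monic with integer coefficients and unit constant term; both are perfectly sound.
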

\begin{proof}
We have that $\zeta r_y(\alpha)=r_y(r\alpha)\in\gl(2,\Z)$.  Since the characteristic  polynomial of $r\alpha$ is the same as the characteristic polynomial of $r_y(r\alpha)$, we see that $r\alpha$ is a unit in $\OK$.  Hence, $q^*(r\alpha)=1$.  Since $q^*(\alpha)=1$, by the multiplicativity of $q^*$ we see that $q^*(r)=1$. By Lemma~\ref{Lemma:q*-q}, it follows that $q(rI)=q(\zeta)=1$.
\end{proof}

\section{Defining the coefficient module $\MSQ$}\label{MSQ}

\begin{definition}\label{D:MSQ}
Define $\MSQ$ to be the $\F$-vector space of formal sums
\[\sum_{\xS\in\XSQ} c_{\xS}\xS\]
with $c_\xS\in \F$, such that the sum is supported on a finite number of $Z$-orbits of $\XSQ$, and such that the coefficients satisfy the relation
\[c_{\zeta\xS}=q^{-1}(\zeta)c_{\xS}\]
for all $\zeta\in Z$.
\end{definition}

In this paper $q$ has order $2$, but we write $q^{-1}$ as a check on our computations and for possible generalizations to characters of higher orders.

\begin{definition} 
Define $Z_{S,q}=\{\zeta_r\in Z:r\in \KSQ\}$. 
\end{definition} 

Note that $Z_{S,q}$ is a subgroup of finite index in $Z$ (since $\KSQ$ has finite index in $K_{S_0}$).  Let $\zreps$ be a collection of coset representatives of $Z_{S,q}$ inside $Z$.

Now $Z\,\gl(2,\Z)$ is a group which acts on $\XSQ$.  Hence, we may choose a collection of representatives of the $Z\,\gl(2,\Z)$-orbits in $\XSQ$.  We will denote such a collection by $\A$.

Note that given any $x\in\XSQ$, we may assume (possibly by changing $\A$) that $x\in \A$.  For the remainder of this section, we fix a set $\A$ of $Z\,\gl(2,\Z)$-orbit representatives.

Clearly each $Z$-orbit in $\XSQ$ contains at least one element of the form $gx$ with $g\in\gl(2,\Z)$ and $x\in\A$.  

\begin{definition}\label{D:Z-orbit-reps}
Let $\zorbitreps$ be a collection of representatives of the $Z$-orbits in $\XSQ$, chosen so that each representative 
in $\zorbitreps$ is of the form $gx$ for $g\in\gl(2,\Z)$ and $x\in\A$.
\end{definition}

\begin{remark}\label{Remark:z-orbits} Note that $\zorbitreps$ is not uniquely determined by $\A$.  However, once a choice of $\zorbitreps$ is fixed, any $Z$-orbit will contain a unique representative $gx\in\zorbitreps$, and the element $x\in \A$ and the coset $g\stab{x}$ of $g\in\gl(2,\Z)$ are uniquely defined. 
\end{remark}

For the remainder of this section, we fix a choice of $\zorbitreps$ corresponding to our choice of $\A$.

\begin{lemma}\label{msqbasis} The set
\[\left\{\sum_{\zeta\in\zreps}q^{-1}(\zeta)\zeta x:x\in\zorbitreps\right\}\]
is an $\F$-basis of $\MSQ$.  
\end{lemma}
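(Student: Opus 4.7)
The plan is to let $v_x := \sum_{\zeta\in\zreps} q^{-1}(\zeta)\zeta x$ for $x\in\zorbitreps$ and verify three things: each $v_x$ lies in $\MSQ$, distinct $v_x$ are linearly independent, and the $v_x$ span $\MSQ$.

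The first task is to check $v_x \in \MSQ$. The support of $v_x$ is a single $Z$-orbit, so only the coefficient relation $c_{\eta y} = q^{-1}(\eta) c_y$ needs verification. Using commutativity of $Z$, any $\eta \in Z$ and $\zeta \in \zreps$ factor as $\eta\zeta = \zeta' \zeta_0$ with $\zeta' \in \zreps$ and $\zeta_0 \in \ZSQ$. Because $\zeta_r = rI$ acts on any representative $y \in Y$ of $x$ as right scalar multiplication by $r$, we have $\zeta_r x = x$ in $\XSQ = Y/\KSQ$ iff $r \in \KSQ$, i.e.\ iff $\zeta_r \in \ZSQ$; in particular $\zeta_0 x = x$, so $\eta\zeta x = \zeta' x$. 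The coefficient of $\zeta' x$ in $v_x$ is $q^{-1}(\zeta')$, and this equals $q^{-1}(\eta\zeta)$ provided $q(\zeta_0) = 1$. That last equality is precisely Lemma~\ref{Lemma:q*-q} applied to $\zeta_0 = rI$ with $r \in \KSQ \subseteq \ker q^*$. As a byproduct, the freeness of the $Z/\ZSQ$-action shows that each $Z$-orbit has exactly $|\zreps|$ elements, namely the distinct $\zeta x$ for $\zeta \in \zreps$.

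Linear independence is then immediate from the fact just established, that the $v_x$ for distinct $x \in \zorbitreps$ are supported on disjoint $Z$-orbits. For spanning, take any $\mu = \sum_y c_y y \in \MSQ$; by Remark~\ref{Remark:z-orbits} the finitely many $Z$-orbits supporting $\mu$ correspond bijectively to finitely many $x \in \zorbitreps$, and within each orbit the coefficient relation reconstructs the entire contribution $\sum_{\zeta \in \zreps} c_{\zeta x}(\zeta x) = c_x \cdot v_x$ from the single coefficient $c_x$. Summing over orbits yields $\mu = \sum_x c_x v_x$ as a finite sum. The only potential obstacle is consistency of the coefficient relation on the stabilizer $\ZSQ$: it forces $c_x = q^{-1}(\zeta_0) c_x$ for every $\zeta_0 \in \ZSQ$, which would collapse $\MSQ$ to zero were $q|_{\ZSQ}$ nontrivial. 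That $q|_{\ZSQ}$ is trivial is the substantive point, and is exactly where Lemma~\ref{Lemma:q*-q} enters; the remainder of the argument is orbit bookkeeping.
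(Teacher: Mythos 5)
Your proof is correct and follows essentially the same orbit-decomposition strategy as the paper's: identify the $Z$-orbit of each $x\in\zorbitreps$ with $\{\zeta x:\zeta\in\zreps\}$, use the defining relation $c_{\zeta x}=q^{-1}(\zeta)c_x$ to see each orbit contributes a scalar multiple of the corresponding $v_x$, and invoke finite support. You add useful rigor the paper elides: explicitly checking that the $\zeta x$ are pairwise distinct for $\zeta\in\zreps$ (the stabilizer of $x$ in $Z$ is exactly $\ZSQ$) and observing that the triviality of $q$ on $\ZSQ$ (via Lemma~\ref{Lemma:q*-q}) is what keeps $\MSQ$ from collapsing to zero.
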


\begin{proof} For any $x\in\XSQ$, the $Z$-orbit of $x$ is equal to the set $\{\zeta x:\zeta\in\zreps\}$.  By the relation on the coefficients of an element in $\MSQ$, the coefficient of $\zeta x$ is equal to $q^{-1}(\zeta)$ times the coefficient of $x$.  Since an element of $\MSQ$ is supported on finitely many $Z$-orbits, the lemma follows.
\end{proof}

\begin{lemma}\label{L:Z-action} $\MSQ$ is an $S_0$-module.  For $r\in\Q^\times\cap K_{S_0}$, the action of $\zeta_r$ on $\MSQ$ is via the scalar $q(\zeta_r)$.
\end{lemma}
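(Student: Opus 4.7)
The plan is to check both claims directly from the defining conditions on $\MSQ$ given in Definition~\ref{D:MSQ}, using only the fact that $Z$ consists of scalar matrices (hence central in $\gl(2,\Q)$) and the relation $c_{\zeta x}=q^{-1}(\zeta)c_x$.

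First I would verify that $\MSQ$ is preserved by the natural $S_0$-action on the formal $\F$-span of $\XSQ$. Recall that $\gl(2,\Q)$ acts on $Y$ on the left and commutes with the right scalar action of $\KSQ$, so $S_0 \subset \gl(2,\Q)$ acts on $\XSQ = Y/\KSQ$. Given $v = \sum_x c_x x \in \MSQ$ and $s \in S_0$, the element $s \cdot v$ has coefficient $c'_y := c_{s^{-1}y}$ at $y$. I need to check the two conditions: (a) $s \cdot v$ has finite support modulo $Z$, and (b) $c'_{\zeta y} = q^{-1}(\zeta) c'_y$ for all $\zeta \in Z$. For (a), since $Z$ is central, $s$ permutes the $Z$-orbits of $\XSQ$, so finite support modulo $Z$ is preserved. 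For (b), using centrality $s^{-1}\zeta y = \zeta s^{-1} y$, and then
\[
c'_{\zeta y} = c_{s^{-1}\zeta y} = c_{\zeta s^{-1} y} = q^{-1}(\zeta)\, c_{s^{-1}y} = q^{-1}(\zeta)\, c'_y,
\]
as required. Associativity of the action is inherited from the $\gl(2,\Q)$-action on $\XSQ$, so $\MSQ$ is an $S_0$-submodule of $\F\XSQ$.

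Next I would treat the scalar action of $\zeta = \zeta_r$ with $r \in \Q^\times \cap K_{S_0}$, so that $\zeta \in Z$. For $v = \sum_x c_x x \in \MSQ$, the coefficient of $y$ in $\zeta \cdot v$ is $c_{\zeta^{-1}y}$. Applying the defining relation of $\MSQ$ with the element $\zeta^{-1} \in Z$ gives
\[
c_{\zeta^{-1}y} = q^{-1}(\zeta^{-1})\, c_y = q(\zeta)\, c_y,
\]
so $\zeta \cdot v = q(\zeta)\, v$. This is the desired scalar action.

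There is no real obstacle here: the lemma is essentially a bookkeeping consequence of the centrality of $Z$ in $\gl(2,\Q)$ together with the built-in transformation law $c_{\zeta x} = q^{-1}(\zeta) c_x$. The only points that need a moment of care are checking that $S_0$ genuinely acts on $\XSQ$ (which follows because the left $\gl(2,\Q)$-action and right $\KSQ$-action on $Y$ commute) and that $q$ is well defined on the $\zeta_r$ appearing, which is ensured by the hypothesis $r \in \Q^\times \cap K_{S_0}$ so that $\zeta_r \in Z$.
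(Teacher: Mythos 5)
Your proof is correct. For the first assertion (that $\MSQ$ is an $S_0$-module) your argument is essentially identical to the paper's, just spelled out more explicitly: both rest on the centrality of $Z$ in $\GL(2,\Q)$, which implies that $s\in S_0$ permutes $Z$-orbits and that the coefficient relation $c_{\zeta x}=q^{-1}(\zeta)c_x$ is preserved under $s$.

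For the second assertion (the scalar action of $\zeta_r$), your route is genuinely more direct than the paper's. The paper invokes Lemma~\ref{msqbasis} to reduce to basis elements of the form $\sum_{\zeta\in\zreps}q^{-1}(\zeta)\zeta x$, then tracks a reindexing bijection $\zeta\mapsto\overline{\zeta\zeta_r}$ on the coset representatives $\zreps$ and uses $q(\bar\zeta)=q(\zeta)$ to conclude. You bypass the basis entirely: reading off the coefficient of $y$ in $\zeta_r v$ as $c_{\zeta_r^{-1}y}$ and applying the defining relation of $\MSQ$ once with $\zeta_r^{-1}\in Z$ gives $c_{\zeta_r^{-1}y}=q^{-1}(\zeta_r^{-1})c_y=q(\zeta_r)c_y$ immediately. (Note that $Z$ is a group, so $\zeta_r^{-1}\in Z$ and the defining relation applies.) Your argument is shorter and doesn't rely on the particular choice of $\zreps$; the paper's version has the mild advantage of exhibiting the computation on the explicit basis that is reused throughout Section~4, but as a self-contained proof of this lemma your approach is cleaner.
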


\begin{proof}
For $s\in S_0$, we have $s\sum c_\xS\xS=\sum c_{\xS} s\xS\in\MSQ$, since $Z$ is in the center of $S_0$.

It suffices to prove the statement about the action of $\zeta_r$ on basis elements of the form
\[\sum_{\zeta_\ell\in\zreps} q^{-1}(\zeta_\ell)\zeta_\ell\xS,\]
with $\xS\in\zorbitreps$. For $\zeta$ in $Z$, we will define $\bar\zeta$ to be the unique element of $\zreps$ such that $\zeta\in\bar\zeta \ZSQ$.  The map from $\zreps$ to $\zreps$ given by $\zeta\mapsto\overline{\zeta\zeta_r}$ for a fixed 
$\zeta_r$ is a bijection.  We also note that $q(\overline\zeta)=q(\zeta)$ for any $\zeta\in Z$.

Setting $u=r\ell$, we now have 
\begin{align*}
\zeta_r\sum_{\zeta_\ell\in\zreps} q^{-1}(\zeta_\ell)\zeta_\ell\xS&=\sum_{\zeta_\ell\in\zreps} q^{-1}(\zeta_\ell)\zeta_\ell\zeta_r\xS\cr
&=\sum_{\zeta_\ell\in\zreps} q^{-1}(\zeta_\ell)\zeta_{\ell r}\xS\cr
&=\sum_{\overline\zeta_u\in\zreps} q^{-1}(\zeta_u\zeta_r^{-1})\zeta_u\xS\cr
&=q^{-1}(\zeta_r^{-1})\sum_{\overline\zeta_u\in\zreps} q^{-1}(\overline\zeta_u)\overline\zeta_u\xS\cr
&=q(\zeta_r)\sum_{\zeta_\ell\in\zreps} q^{-1}(\zeta_\ell)\zeta_\ell\xS.\qedhere
\end{align*}
\end{proof}

\begin{corollary}\label{C:unique-basis} The basis described in Lemma~\ref{msqbasis} is independent of the choice of $\zorbitreps$.
\end{corollary}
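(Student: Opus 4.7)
The plan is to show that for any two allowed choices $\zorbitreps$ and $\zorbitreps'$, the resulting sets of basis vectors from Lemma~\ref{msqbasis} coincide. For a given $Z$-orbit, let $x\in\zorbitreps$ and $x'\in\zorbitreps'$ be the chosen representatives; then $x'=\zeta_0 x$ for some $\zeta_0\in Z$, and we must show that $v_{x'}=v_x$, where $v_x:=\sum_{\zeta\in\zreps}q^{-1}(\zeta)\,\zeta x$. First I would compute how $v_x$ transforms under $x\mapsto\zeta_0 x$. Using the bijection of $\zreps$ given by $\zeta\mapsto\overline{\zeta\zeta_0}$, the fact that $\ZSQ$ stabilizes every element of $\XSQ$ (so that $\zeta x=\overline\zeta x$ for any $\zeta\in Z$), and the fact that $q$ is trivial on $\ZSQ$ (which follows from Lemma~\ref{Lemma:q*-q} together with the definition of $\KSQ$), an unwinding essentially identical to the one in the proof of Lemma~\ref{L:Z-action} yields $v_{x'}=q(\zeta_0)\,v_x$. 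So the corollary reduces to proving that $q(\zeta_0)=1$.

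For this I would exploit the structural constraint built into Definition~\ref{D:Z-orbit-reps}: write $x=gx_\A$ and $x'=g'x_\A'$ with $g,g'\in\gl(2,\Z)$ and $x_\A,x_\A'\in\A$. Since $x$ and $x'$ lie in the same $Z\gl(2,\Z)$-orbit and $\A$ meets each such orbit exactly once, we must have $x_\A=x_\A'$. Centrality of $\zeta_0$ then turns $\zeta_0 g x_\A=g'x_\A$ into $\zeta_0 h\cdot x_\A=x_\A$, where $h:=(g')^{-1}g\in\gl(2,\Z)$. Representing $x_\A$ by some $y\in Y$, this stabilizer relation becomes $\zeta_0 h\, y=y\alpha$ for some $\alpha\in\KSQ$. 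Since $\alpha y=r_y(\alpha)\,y$ and the entries of $y$ are $\Q$-linearly independent, comparing matrices gives the identity $\zeta_0 h=r_y(\alpha)$, and hence $h=\zeta_0^{-1}r_y(\alpha)\in\gl(2,\Z)$. Noting that $\zeta_0^{-1}\in Z$ (because $K_{S_0}$ is a group), Lemma~\ref{Lemma:Z-KSQ} applied with $\zeta=\zeta_0^{-1}$ then yields $q(\zeta_0^{-1})=1$, hence $q(\zeta_0)=1$.

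The main obstacle is recognizing that the constraint ``each representative in $\zorbitreps$ has the form $gx_\A$'' is precisely what is needed to put $\zeta_0$ into the configuration governed by Lemma~\ref{Lemma:Z-KSQ}; without that constraint $\zeta_0$ could be an arbitrary element of $Z$ and the reduction would fail. Once this connection is seen, the rest of the argument is a routine unwinding of the definitions of $\MSQ$, $\zreps$, and $r_y$.
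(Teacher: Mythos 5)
Your argument is correct and follows essentially the same path as the paper: write the two representatives of the same $Z$-orbit as $gx_\A$ and $g'x_\A$ with the same $x_\A\in\A$, note they differ by some $\zeta_0\in Z$, unwind the stabilizer condition to produce a relation of the form $\zeta r_y(\alpha)\in\gl(2,\Z)$ with $\alpha\in\KSQ$, and invoke Lemma~\ref{Lemma:Z-KSQ} to conclude $q(\zeta_0)=1$. The only cosmetic difference is that you re-derive the relation $v_{\zeta_0 x}=q(\zeta_0)v_x$ by repeating the re-indexing, whereas the paper simply cites Lemma~\ref{L:Z-action} for this; that is an immaterial difference.
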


\begin{proof} Let $\zorbitreps$ and $\zorbitreps'$ be two choices of $Z$-orbit representatives, as in Definition~\ref{D:Z-orbit-reps}.  For a given $Z$-orbit, let $gx\in\zorbitreps$ and $g'x\in\zorbitreps'$ (with $g,g'\in\gl(2,\Z)$) be the orbit representatives. Note that they will have the same representative $x\in\A$, since they are in the same $Z\,\gl(2,\Z)$-orbit.  Since $gx$ and $g'x$ are in the same $Z$-orbit, for some $\zeta\in Z$ we have $\zeta g'x= gx$, so that $\zeta g^{-1}g'x=x$, and (choosing a $y\in Y$ representing $x$) we see that $\zeta g^{-1}g'y\alpha=y$ for some $\alpha\in\KSQ$. Hence, $\zeta r_y(\alpha)\in\GL(2,\Z)$, and by Lemma~\ref{Lemma:Z-KSQ} we see that  $q(\zeta)=1$.  The corresponding basis elements then differ by a factor of $\zeta$, so that by Lemma~\ref{L:Z-action} they differ by a scalar factor of $q(\zeta)=1$.
\end{proof}
 
\begin{lemma}\label{induced} If we consider $\MSQ$ as a $\gl(2,\Z)$-module, it is a sum of induced modules.  In fact, we have isomorphisms of $\gl(2,\Z)$-modules defined by
\[e:\bigoplus_{\xS\in\mathcal A} \F [\gl(2,\Z)]\otimes_{\F \stab{\xS}}\F \to\MSQ,\]
and
\[f:\MSQ\to\bigoplus_{\xS\in\mathcal A} \F[\gl(2,\Z)]\otimes_{\F \stab{\xS}}\F, \]
such that $e$ and $f$ are inverses of each other.
\end{lemma}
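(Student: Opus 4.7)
The approach is to write down both $e$ and $f$ by their action on the natural bases of the two sides, verify well-definedness and $\gl(2,\Z)$-equivariance, and then check by direct substitution that the maps are mutual inverses.

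First I would define $e$ on generators. For $\xS\in\A$ and $g\in\gl(2,\Z)$, I set
\[e\bigl((g\otimes 1)_{\xS}\bigr) := \sum_{\zeta\in\zreps} q^{-1}(\zeta)\,\zeta g\xS,\]
which by Lemma~\ref{msqbasis} is precisely the basis vector of $\MSQ$ associated to the $Z$-orbit of $g\xS$ (so in particular it lies in $\MSQ$, with the right coefficient relation and single-$Z$-orbit support built in). Well-definedness on the tensor product is immediate: for any $h\in\stab{\xS}$ we have $h\xS=\xS$, so replacing $g$ by $gh$ leaves the sum unchanged. Equivariance under $\gl(2,\Z)$ follows from the centrality of $Z$ in $\gl(2,\Q)$: every $\gamma\in\gl(2,\Z)$ commutes with every $\zeta\in Z$, so $\gamma\cdot e(g\otimes 1)=e(\gamma g\otimes 1)$.

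Next I would define $f$ by specifying it on the basis of $\MSQ$ from Lemma~\ref{msqbasis}. A basis element is attached to a $Z$-orbit with representative $g\xS\in\zorbitreps$ ($\xS\in\A$, $g\in\gl(2,\Z)$), and I send it to $(g\otimes 1)_{\xS}$ in the $\xS$-summand. The essential point here is Remark~\ref{Remark:z-orbits}: both $\xS$ and the coset $g\stab{\xS}$ are intrinsic to the $Z$-orbit, so the target $(g\otimes 1)_{\xS}$ is unambiguous in $\F[\gl(2,\Z)]\otimes_{\F\stab{\xS}}\F$. Extending $\F$-linearly, $\gl(2,\Z)$-equivariance of $f$ can be checked directly by tracking how $\gamma$ permutes $Z$-orbit representatives, or derived a posteriori from the equivariance of $e$ together with the inverse relation.

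Finally, I would verify that $e$ and $f$ are mutual inverses. The composition $e\circ f$ sends each basis vector of $\MSQ$ back to itself by direct substitution in the two formulas. In the other direction, $f\circ e\bigl((g\otimes 1)_{\xS}\bigr)=(g'\otimes 1)_{\xS}$ where $g'\xS\in\zorbitreps$ is the chosen representative of the $Z$-orbit $Z\cdot g\xS$; since $g'\stab{\xS}=g\stab{\xS}$ by Remark~\ref{Remark:z-orbits}, this recovers $(g\otimes 1)_{\xS}$. The main obstacle is the well-definedness of $f$: the assignment must depend only on the $Z$-orbit and not on which representative is chosen from it, and the target must be independent of the freedom to replace $g$ by $gh$ with $h\in\stab{\xS}$. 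This is precisely the content of Remark~\ref{Remark:z-orbits} (with Corollary~\ref{C:unique-basis} ensuring the basis of $\MSQ$ itself is canonical), so once those identifications are invoked the remainder of the verification is bookkeeping in the explicit formulas for $e$ and $f$.
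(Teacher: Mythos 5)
Your construction and verification plan track the paper's own proof essentially line for line: the paper defines $e$ on basis tensors by $e(g\otimes 1)=\sum_{\zeta\in\zreps}q(\zeta^{-1})\zeta g\xS$, defines $f$ on the Lemma~\ref{msqbasis} basis of $\MSQ$ by sending the basis element indexed by $g\xS\in\zorbitreps$ to $g\otimes 1$, and then asserts the two are mutually inverse by inspection on bases. You reproduce the same two formulas, the same well-definedness checks, and the same check that $e\circ f=\mathrm{id}$ and $f\circ e=\mathrm{id}$ on generators. So this is not a different route.

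Where I would push back is on the one step you try to make explicit, namely the claim that $g'\stab{\xS}=g\stab{\xS}$ in the computation of $f\circ e\bigl((g\otimes 1)_{\xS}\bigr)$, justified ``by Remark~\ref{Remark:z-orbits}.'' That remark only records that, \emph{given} the unique representative $gx\in\zorbitreps$ of a $Z$-orbit, the element $x\in\A$ and the coset $g\stab{x}$ are recoverable from that representative. It does \emph{not} say that an arbitrary $g''\in\gl(2,\Z)$ with $g''\xS$ in the same $Z$-orbit as $g\xS$ satisfies $g''\stab{\xS}=g\stab{\xS}$ --- which is what your argument actually needs, since $e$ is being applied to \emph{every} basis tensor $g\otimes 1$, not only those with $g\xS\in\zorbitreps$. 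That assertion requires its own argument (and indeed is the real content of the injectivity of $e$): note for instance that $-I$ lies in $Z$ with $q(-I)=\theta(-1)=1$, so $g\xS$ and $(-g)\xS$ sit in the same $Z$-orbit and $e$ sends $(g\otimes 1)_{\xS}$ and $((-g)\otimes 1)_{\xS}$ to the same element of $\MSQ$, while $-I\notin\stab{\xS}$ by Lemma~\ref{L:X^S-stab}(c). So the identification of cosets you want cannot be obtained by quoting the Remark; it is exactly the point that has to be established, and your proposal does not close it. (The paper's own verbal shortcut ``one sees that on basis elements, $e$ and $f$ are inverses'' elides the same point, so you are not proving less than the paper --- but the intermediate claim you introduce and attribute to Remark~\ref{Remark:z-orbits} is not a consequence of it, and that gap should be flagged rather than papered over.)
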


\begin{remark} These isomorphisms depend on the choice of $\A$, which we suppress from the notation.
\end{remark}

\begin{proof}
On basis elements of the form $g\otimes 1\in \F [\gl(2,\Z)]\otimes_{\F\stab{\xS}}\F$ for $\xS\in\mathcal A$, we define $e$ as
\[e(g\otimes 1)=\sum_{\zeta_r\in\zreps}q(\zeta_r^{-1})\zeta_r g\xS,\]
and extend linearly.  Since any $g\in\stab{\xS}$ acts trivially on $\xS$, this is well-defined, and it is clearly a homomorphism of $\gl(2,\Z)$-modules.  
 
We define $f$ on basis elements corresponding to $gx\in\zorbitreps$ (with $x\in\A$ and $g\in\gl(2,\Z)$) by
\[f\left(\sum_{\zeta\in\zreps}q^{-1}(\zeta)\zeta gx\right)=g\otimes1\in\F\,\gl(2,\Z)\otimes_{\F\,\stab{x}}\F\subset\bigoplus_{x\in\A}\F\,\gl(2,\Z)\otimes_{\F\,\stab{x}}\F ,\]
and extending linearly.  By Lemma~\ref{msqbasis}, this gives a well defined $\F$-linear map from $\MSQ$ to 
\[\bigoplus_{x\in\A}\F\,\gl(2,\Z)\otimes_{\F\,\stab{x}}\F.\]

One sees that on basis elements, $e$ and $f$ are inverses.  Hence, they are inverses of each other as $\F$-vector space maps.  Since $e$ is a $\gl(2,\Z)$-module map, so is $f$ and thus both are $\gl(2,\Z)$-module isomorphisms.
\end{proof}

\section{Homology with coefficients in $\MSQ$ and Hecke operators}\label{coefficients}

In this section, we will fix an element $x_0\in\XSQ$, and choose a set $\A$ of $Z\,\gl(2,\Z)$-orbit representatives in $\XSQ$ that contains $x_0$.

As a consequence of Lemma~\ref{induced}, we have that
\[\MSQ\cong \bigoplus_{\xS\in\mathcal A} \ind_{\stab{\xS}}^{\GL(2,\Z)}\F.\]
Hence, by Shapiro's lemma, we have
\[H_1(\gl(2,\Z),\MSQ)\cong \bigoplus_{\xS\in\mathcal A}H_1(\Gamma_{\xS},\F).\]
Since $\stab{\xS}$ is infinite cyclic, we have
\[H_1(\stab{\xS},\F)\cong H^0(\stab{\xS},\F)\cong \F.\]
For each $\xS\in \mathcal A$, we choose a generator $\zxs$ for $H_1(\stab{\xS},\F)$, as in Definition~\ref{Def:zx}.

We now examine an individual Hecke operator.  Let $s\in S_0$, and let $E$ be a collection of single coset representatives for $\gl(2,\Z)s\gl(2,\Z)$, so that
\[\gl(2,\Z)s\gl(2,\Z)=\coprod_{s_\alpha\in E} \gl(2,\Z)s_\alpha.\]
Then $E$ is a finite set.  At this point, the $s_\alpha$ may be altered by left-multiplication by elements of $\gl(2,\Z)$.  We now adjust the elements of $E$ to make the computation of Hecke operators easier.

Because of our choice of $\A$, we have that $x_0\in\A$.  For convenience in what follows, we will write $\stab{\xS_0}=\stab0$.  Recall that $\mathcal A$ is a collection of $Z\,\gl(2,\Z)$-representatives of $\XSQ$.  Hence, we may write any element of $\XSQ$ as $\zeta\gamma \xS$ for some $\xS\in\A$, $\zeta\in \zreps$ and $\gamma\in\gl(2,\Z)$.  Suppose that for $s_\alpha\in E$, we have
\[s_\alpha\xS_0=\zeta\gamma\xS.\]
We will then adjust $s_\alpha$, replacing it by $\gamma^{-1}s_\alpha$, and denote the corresponding $\zeta$ by $\zeta_\alpha$, and the corresponding $\xS$ by $\xS_\alpha$, so that we have
\[s_\alpha\xS_0=\zeta_\alpha\xS_\alpha.\]
We now fix this choice of $E$.

For a given $\zeta\in Z$ and $\xS\in\A$, we define
\[E(\zeta\xS)=\{s_\alpha\in E:s_\alpha\xS_0=\zeta\xS\}.\]
Since each $s_\alpha x_0$ is of the form $\zeta \xS$, we see easily that $E$ is a disjoint union of the $E(\zeta\xS)$ as $\zeta$ runs through $\zreps$ and $\xS$ runs through $\A$ (and $E(\zeta\xS)$ is empty for all but finitely many
 $\xS\in\A$).

Now, let $\zeta\in\zreps$, $\xS\in\A$, and (to avoid triviality) assume that for some $s_\alpha\in\A$, we have $s_\alpha\xS_0=\zeta\xS$.  We will then define $W_{\zeta\xS}$ to be the set of all elements $g\in\gl(2,\Z)s\gl(2,\Z)$ such that $g\xS_0=\zeta\xS$.  This set is nonempty, and stable under right multiplication by $\stab{0}$ and under left multiplication by $\stab{\xS}$.  Hence, we may write it as a disjoint union of double cosets
\[W_{\zeta\xS}=\coprod_{t\in T_{\zeta\xS}}\stab{\xS}t\stab0,\]
for some subset $T_{\zeta\xS}\subseteq W_{\zeta\xS}$.  

For each $t\in T_{\zeta\xS}$, 
we may choose a set $B_{\zeta x,t}$ of single coset representatives and write the double coset $\stab{\xS}t\stab0$ as a disjoint union of single cosets
\[\stab{\xS}t\stab0=\coprod_{t_\beta\in B_{\zeta\xS,t}}\stab{\xS}t_\beta.\]

\begin{lemma}\label{L:partition} With the notation described above,
\begin{enumerate}
\item $E$ is a disjoint union of the $E(\zeta\xS)$ for $\zeta\in\zreps$ and $\xS\in\A$.
\item For each $t\in T_{\zeta\xS}$, we may choose $B_{\zeta\xS,t}$ to be a subset of $E$.
\item With this choice, $E(\zeta\xS)$ is the disjoint union of the $B_{\zeta\xS,t}$ over all $t\in T_{\zeta\xS}$,
and $B_{\zeta\xS,t}=\stab{\xS}t\stab0\cap E(\zeta\xS)$.
\end{enumerate}
\end{lemma}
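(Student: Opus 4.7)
The plan is to dispatch (1) by a uniqueness argument built into the adjustment of $E$, and to derive (2) and (3) together from a single identity together with a bijection of coset spaces.

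For (1), the adjustment attaches to each $s_\alpha\in E$ a pair $(\zeta_\alpha,x_\alpha)\in\zreps\times\A$ with $s_\alpha x_0=\zeta_\alpha x_\alpha$, placing $s_\alpha$ in $E(\zeta_\alpha x_\alpha)$. To show this pair is well-defined (so that $E=\coprod_{(\zeta,x)}E(\zeta x)$), suppose $\zeta x=\zeta' x'$ with $(\zeta,x),(\zeta',x')\in\zreps\times\A$. Since $\A$ contains exactly one representative of each $Z\cdot\gl(2,\Z)$-orbit in $\XSQ$, we have $x=x'$; writing $\zeta^{-1}\zeta'=rI$ and using $rI\cdot x=x$ in $\XSQ$, one obtains $r=\beta\in\Q^\times\cap\KSQ$ (the manipulation $ry=y\beta$ with $\beta\in\KSQ$), whence $\zeta^{-1}\zeta'\in\ZSQ$ and $\zeta=\zeta'$ by the defining property of $\zreps$.

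The crux of (2) and (3) is the following Key Claim:
\[
\gl(2,\Z)t\stab{0}\cap W_{\zeta x}=\stab{x}\,t\,\stab{0},\qquad \stab{x}\backslash\stab{x}\,t\,\stab{0}\xrightarrow{\sim}\gl(2,\Z)\backslash\gl(2,\Z)t\stab{0}.
\]
For the identity, any $g=\gamma_1 t\gamma_2$ with $\gamma_1\in\gl(2,\Z)$, $\gamma_2\in\stab{0}$ satisfies $gx_0=\gamma_1\zeta x$, so $g\in W_{\zeta x}$ forces $\gamma_1\in\stab{x}$. Surjectivity of the coset map is clear from $t\gamma_2\in\stab{x}t\stab{0}$. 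Injectivity reduces to $t^{-1}\gl(2,\Z)t\cap\stab{0}=t^{-1}\stab{x}t\cap\stab{0}$: if $\gamma_2\in\stab{0}$ and $t\gamma_2 t^{-1}\in\gl(2,\Z)$, then $t\gamma_2 t^{-1}$ fixes $tx_0=\zeta x$ (and hence $x$), so lies in $\gl(2,\Z)\cap\mathrm{Stab}_{\gl(2,\Q)}(x)=\stab{x}$.

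Granting the Key Claim, (2) and (3) unwind together. Each $\gl(2,\Z)$-coset $D\subseteq\gl(2,\Z)t\stab{0}$ intersects $W_{\zeta x}$ nontrivially (for instance $t\gamma_2\in D\cap W_{\zeta x}$ when $D=\gl(2,\Z)t\gamma_2$), and by the identity $D\cap W_{\zeta x}=D\cap\stab{x}t\stab{0}$ is exactly one $\stab{x}$-coset. Choose the representative of $D$ in $E$ to lie in this intersection; it automatically satisfies $s_\alpha x_0=\zeta x$, so the choice is consistent with the adjustment with $(\zeta_\alpha,x_\alpha)=(\zeta,x)$. Setting $B_{\zeta x,t}:=E\cap\stab{x}t\stab{0}\subseteq E$, we obtain one representative per $\stab{x}$-coset of $\stab{x}t\stab{0}$, proving (2). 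For (3), $B_{\zeta x,t}=E(\zeta x)\cap\stab{x}t\stab{0}$ follows from $\stab{x}t\stab{0}\subseteq W_{\zeta x}$ and $E\cap W_{\zeta x}=E(\zeta x)$, and $E(\zeta x)=\coprod_{t\in T_{\zeta x}}B_{\zeta x,t}$ is inherited from $W_{\zeta x}=\coprod_t\stab{x}t\stab{0}$. The main obstacle is the injectivity in the Key Claim, which hinges on the identification $\gl(2,\Z)\cap\mathrm{Stab}_{\gl(2,\Q)}(x)=\stab{x}$; once that is in hand, the remainder is organizational bookkeeping.
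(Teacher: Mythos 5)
Your proof is correct and takes essentially the same approach as the paper's; the ``Key Claim'' neatly packages into one statement (the intersection identity $\gl(2,\Z)t\stab0\cap W_{\zeta\xS}=\stab{\xS}t\stab0$ together with the bijection $\stab{\xS}\backslash\stab{\xS}t\stab0\xrightarrow{\sim}\gl(2,\Z)\backslash\gl(2,\Z)t\stab0$) the same observations the paper makes coset-by-coset, and your injectivity argument ($t\gamma_2t^{-1}$ fixes $\zeta\xS$, hence lies in $\stab{\xS}$) is exactly the underlying mechanism of the paper's argument. The one place where your exposition is looser than the paper's is the phrase ``choose the representative of $D$ in $E$ to lie in this intersection'': $E$ has already been fixed by the adjustment procedure of Section~\ref{coefficients}, so what must be verified is that the already-adjusted $s_\alpha\in E\cap D$ in fact satisfies $s_\alpha\xS_0=\zeta\xS$ --- this is what the paper's chain $\xS_\alpha=\xS$, hence $g\in\stab{\xS}$, hence $\zeta_\alpha=\zeta$ is establishing, and your Key Claim supplies the ingredients for it, but the ``choose'' phrasing glosses over rather than proves that step.
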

\begin{proof} First, note that because $\zeta$ is central,  $\stab{\zeta\xS}=\stab{\xS}$ for any $\xS\in \XSQ$.

We have seen (1) above.  

For (2), choose $t\in T_{\zeta\xS}$, and let $\stab{\xS}u$ be any single coset in $\stab{\xS}t\stab0$.  Then $u\xS_0=\zeta\xS$, and $u$ is in some single coset $\GL(2,\Z)s_\alpha$ for some $s_\alpha\in E$.  Then $u=gs_\alpha$ for some $g\in\GL(2,\Z)$.  We then have that
\[\zeta\xS=u\xS_0=gs_\alpha\xS_0=g\zeta_\alpha\xS_\alpha.\]
This implies that $\xS$ and $\xS_\alpha$ are in the same $Z\,\gl(2,\Z)$-orbit, and hence equal (since both come from $\A$).  Hence, $g\in\stab{\xS}$, and consequently $\zeta_\alpha x=\zeta x$.  
Therefore $\zeta_\alpha\zeta^{-1}\in \ZSQ$ and since $\zeta_\alpha,\zeta\in\zreps$, it follows that $\zeta_\alpha=\zeta$.
Hence, we have $s_\alpha\in E(\zeta\xS)$, and $\stab{\xS}u=\stab{\xS}s_\alpha$, so we see that we may take the coset representative of $\stab{\xS}u$ to be $s_\alpha\in E$.

For (3), we first note that any coset  $\stab{\xS}t$ for $t\in T_{\xS}$ contains exactly one $s_\alpha$:  
part (2) shows that it contains at least one; if it contained two, say $s_\alpha$ and $s_\delta$, then they would differ by left multiplication by $\gamma\in\stab{\xS}\subseteq\gl(2,\Z)$, which would imply $\gl(2,\Z)s_\alpha=\gl(2,\Z)s_\delta$ and therefore $s_\alpha=s_\delta$.
 Hence, it suffices to show that each $s_\alpha\in E(\zeta\xS)$ is contained in $B_{\zeta\xS,t}$ for some $t\in T_{\zeta\xS}$.  This is, however, clear, since such an $s_\alpha$ is contained in
\[W_{\zeta\xS}=\bigcup_{t\in T_{\zeta\xS}}\stab{\xS}t\stab0.\]
The last assertion is now clear.
\end{proof}

From this point on, we will take $B_{\zeta\xS,t}=\stab{\xS}t\stab{0}\cap E(\zeta\xS)$.

We continue to keep a fixed $\xS_0$.  As $\zeta\in\zreps$, $\xS\in\A$, and $t\in T_{\zeta\xS}$ vary, finitely many $B_{\zeta\xS,t}$ will be nonempty.  We denote these sets by $B_1,\ldots,B_J$ for $J\in\Z$ positive, and for $j\in\{1,\ldots,J\}$ we write $\xS_j$, $\zeta_j$, and $t_j$ for the corresponding values of $\zeta$, $\xS$, and $t$, respectively.  In addition, we will write $\stab j$ for $\stab{\xS_j}$.
With this notation, we now have
\[t_j\xS_0=\zeta_j\xS_j\quad\text{and}\quad s_\alpha\xS_0=\zeta_j\xS_j\]
for all $s_\alpha\in B_j$.

We note that for $s_\alpha\in B_j$, there exist $\eta_\alpha\in\stab j$ and $\delta_\alpha\in\stab0$ such that $s_\alpha=\eta_\alpha t_j\delta_\alpha$.  

\begin{lemma}\label{L:stab-reps} Fix $j\in\{1,\ldots,J\}$, and for each $s_\alpha\in B_j$, write $s_\alpha=\eta_\alpha t_j \delta_\alpha$ with $\delta_\alpha\in\stab0$ and $\eta_\alpha\in\stab j$.  Let $d_j=|B_j|$.  Then
\[\stab0=\coprod_{s_\alpha\in B_j}(t_j^{-1}\stab jt_j\cap\stab0)\delta_\alpha\]
so that we have
\[[\stab0:t_j^{-1}\stab jt_j\cap\stab0]=d_j.\]
\end{lemma}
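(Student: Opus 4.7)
The plan is to derive this from the double-coset decomposition recorded in Lemma~\ref{L:partition}(3), using the standard bijection between $\Gamma_j$-cosets inside $\Gamma_j t_j \Gamma_0$ and $(t_j^{-1}\Gamma_j t_j \cap \Gamma_0)$-cosets inside $\Gamma_0$.

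First I would unpack the hypothesis: by Lemma~\ref{L:partition}(3), $B_j$ is precisely the set of single-coset representatives realizing
\[\Gamma_j t_j \Gamma_0 = \coprod_{s_\alpha \in B_j} \Gamma_j s_\alpha,\]
and substituting $s_\alpha = \eta_\alpha t_j \delta_\alpha$ (with $\eta_\alpha \in \Gamma_j$) gives
\[\Gamma_j t_j \Gamma_0 = \coprod_{s_\alpha \in B_j} \Gamma_j t_j \delta_\alpha.\]
So the $\delta_\alpha$ represent the distinct $\Gamma_j$-orbits in $t_j \Gamma_0$ under left multiplication.

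Next I would check that the map $\delta_\alpha \mapsto (t_j^{-1}\Gamma_j t_j \cap \Gamma_0)\delta_\alpha$ is a well-defined bijection between $B_j$ and the coset space $(t_j^{-1}\Gamma_j t_j \cap \Gamma_0)\backslash \Gamma_0$. For injectivity: if $(t_j^{-1}\Gamma_j t_j \cap \Gamma_0)\delta_\alpha = (t_j^{-1}\Gamma_j t_j \cap \Gamma_0)\delta_{\alpha'}$ then $\delta_\alpha \delta_{\alpha'}^{-1} \in t_j^{-1}\Gamma_j t_j$, whence $t_j \delta_\alpha \delta_{\alpha'}^{-1} t_j^{-1} \in \Gamma_j$, so $\Gamma_j t_j \delta_\alpha = \Gamma_j t_j \delta_{\alpha'}$, forcing $\alpha = \alpha'$ by the disjointness above. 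For surjectivity: given $\gamma \in \Gamma_0$, the element $t_j \gamma$ lies in $\Gamma_j t_j \Gamma_0$, so $t_j \gamma = \eta t_j \delta_\alpha$ for some $\eta \in \Gamma_j$ and some $\alpha$; then $\gamma \delta_\alpha^{-1} = t_j^{-1}\eta t_j$ lies in $t_j^{-1}\Gamma_j t_j$, and also in $\Gamma_0$ since $\gamma,\delta_\alpha \in \Gamma_0$, so $\gamma \in (t_j^{-1}\Gamma_j t_j \cap \Gamma_0)\delta_\alpha$.

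Combining these two observations gives the claimed disjoint decomposition of $\Gamma_0$, and the index equality $[\Gamma_0 : t_j^{-1}\Gamma_j t_j \cap \Gamma_0] = |B_j| = d_j$ follows at once. There is no genuine obstacle here; the only care needed is the bookkeeping that $\eta_\alpha \in \Gamma_j$ versus $\delta_\alpha \in \Gamma_0$, and that the conjugation $t_j^{-1}(\cdot) t_j$ correctly converts a left-$\Gamma_j$-coset statement about $\Gamma_j t_j \Gamma_0$ into a right-$(t_j^{-1}\Gamma_j t_j \cap \Gamma_0)$-coset statement about $\Gamma_0$.
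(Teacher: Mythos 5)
Your proof is correct and follows essentially the same route as the paper's: both arguments show the $\delta_\alpha$ give distinct cosets by conjugating through $t_j$ back into $\Gamma_j$ and invoking the disjointness of $\Gamma_j t_j \Gamma_0 = \coprod \Gamma_j s_\alpha$, and both show coverage by decomposing $t_j\gamma$ for arbitrary $\gamma\in\Gamma_0$. You package this as the standard double-coset bijection, which is a slightly cleaner framing, but the underlying computations are identical to the paper's.
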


\begin{proof} 
First we show that the displayed cosets are all different.
Suppose $s_\alpha,s_\beta\in B_j$ with $\delta_\alpha \delta_\beta^{-1}=t_j^{-1}\gamma_jt_j\in t_j^{-1}\stab jt_j\cap\stab0$ for some $\gamma_j\in\stab j$.  Then $\delta_\alpha=t_j^{-1}\gamma_jt_j\delta_\beta$.  Hence
\begin{align*}
\stab js_\alpha&=\stab j \eta_\alpha t_j\delta_\alpha\cr
&=\stab j \eta_\alpha t_j(t_j^{-1}\gamma_jt_j\delta_\beta)\cr
&=\stab j t_j\delta_\beta\cr
&=\stab j \eta_\beta t_j\delta_\beta\cr
&=\stab j s_\beta,
\end{align*}
where we have used that $\eta_\alpha,\eta_\beta,\gamma_j\in\stab j$.  Hence, $s_\alpha=s_\beta$, and we see that the cosets $(t_j^{-1}\stab jt_j\cap\stab0)\delta_\alpha$ are pairwise disjoint for $s_\alpha\in B_j$.

It remains only to show that the union of the cosets is all of $\stab0$.  Let $g\in\stab0$.  Since $\stab jt_j\stab0$ is a disjoint union of cosets  of the form $\stab j s_\alpha$ for some $\alpha\in B_j$, we have that $t_jg=\gamma_js_\alpha$ for some $\gamma_j\in\stab j$ and $s_\alpha\in B_j$.  Then, since $s_\alpha=\eta_\alpha t_j\delta_\alpha$, for $\eta_\alpha\in\stab j$ and $\delta_\alpha\in\stab0$, we have $t_jg=\gamma_j \eta_\alpha t_j\delta_\alpha$.  Hence
\[g\delta_\alpha^{-1}=t_j^{-1}\gamma_j\eta_\alpha t_j\in t_j^{-1}\stab jt_j\cap\stab 0,\]
so $g\in (t_j^{-1}\stab jt_j\cap\stab 0)\delta_\alpha.$
\end{proof}

The Hecke operator $T_s=\gl(2,\Z)s\gl(2,\Z)$ acts in the usual way on the homology $H_1(\gl(2,\Z),\MSQ)$ (see below), and hence, via the Shapiro isomorphism on the group 
\[\bigoplus_{\xS\in\mathcal A} H_1(\stab{\xS},\F).\]
We now work out the details of the action on this latter group. To do this, we use the following lemma concerning transfers and corestrictions. This lemma is standard, and follows easily from \cite[Sec. III.9]{Brown}.

\begin{lemma} \label{transfer}
Let $A$ be an infinite cyclic group with generator $a$, and $B\subset A$ a subgroup of index $d$, and suppose that $A$ acts trivially on $\F$.  For a group $G$ acting trivially on $\F$, we identify $H_1(G,\F)$ canonically with $G^{ab}\otimes_\Z\F$.
\begin{enumerate}[(i)]
\item The transfer map $\transfer:H_1(A,\F)\to H_1(B,\F)$ takes the generator $a\otimes 1$ to the generator $a^d\otimes 1$.
\item The corestriction map $\corestriction:H_1(B,\F)\to H_1(A,\F)$ (i.e. the map induced by the inclusion $B\subset A$) takes the generator $a^d\otimes 1$ to $d$ times the generator $a\otimes 1$.
\end{enumerate}
\end{lemma}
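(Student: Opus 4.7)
The plan is to make the canonical identifications $H_1(A,\F)\cong A\otimes_\Z\F$ and $H_1(B,\F)\cong B\otimes_\Z\F$ (valid because $A$ and $B$ are abelian and act trivially on $\F$) and then compute each of the two maps directly on generators.

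Part (ii) is immediate from the definition. Under these identifications, $\corestriction$ is the map $B\otimes_\Z\F\to A\otimes_\Z\F$ induced by the inclusion $B\hookrightarrow A$. Viewed inside $A$, the element $a^d\in B$ is nothing other than the $d$-fold sum of $a$, so $\corestriction(a^d\otimes 1)=a^d\otimes 1=d(a\otimes 1)$ in $A\otimes_\Z\F$, as claimed.

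For part (i) I would compute the transfer directly at the chain level of the (abelianized) bar complex, following the formula recorded in \cite[Sec.\ III.9]{Brown}. Take $\{1,a,a^2,\dots,a^{d-1}\}$ as a complete set of right coset representatives for $B\backslash A$. The transfer then sends a group element $g\in A$ to $\sum_{i=0}^{d-1}b_i\in B$, where the $b_i\in B$ are determined by the equations $a^i\,g=b_i\,a^{\sigma(i)}$ with $\sigma$ the induced permutation of $\{0,\dots,d-1\}$. Specializing to $g=a$: for $0\leq i\leq d-2$ one has $a^i\cdot a=1\cdot a^{i+1}$, so $b_i=1$; and for $i=d-1$ one has $a^{d-1}\cdot a=a^d\cdot 1$, so $b_{d-1}=a^d$. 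The sum collapses to $a^d$, yielding $\transfer(a\otimes 1)=a^d\otimes 1$.

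The computation is entirely routine, as the paper signals; the only care needed is to keep the left/right coset conventions consistent with the form of the transfer formula invoked from \cite{Brown}. I note that one cannot instead deduce (i) from (ii) using the identity $\corestriction\circ\transfer=[A:B]$ alone, since $d$ may fail to be invertible in $\F$; this is why the direct chain-level calculation is preferable.
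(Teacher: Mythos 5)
Your proof is correct and takes the same route the paper indicates: the paper gives no argument of its own, merely remarking that the lemma ``is standard, and follows easily from \cite[Sec.\ III.9]{Brown},'' and your write-up simply supplies the details of that reference — identifying $H_1$ with $G^{ab}\otimes_\Z\F$, reading off corestriction as the map induced by inclusion for (ii), and computing the transfer (Verlagerung) on $H_1$ via the explicit coset formula for (i). The coset bookkeeping checks out (here $A$ is abelian, so the left/right distinction is harmless anyway), and your closing observation — that one cannot simply deduce (i) from (ii) via $\corestriction\circ\transfer=[A\colon B]$ when $d$ is not invertible in $\F$ — is a genuine and relevant point, since the paper does apply this lemma over fields of positive characteristic.
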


With notation as above, we will use the techniques of \cite{AD1} to write the Hecke operator $T_s$ (given as a sum of actions of all the $s_\alpha$) acting on a generator $z_0$,  as a sum of partial Hecke operators $U_{t_j}$ given as a sum of actions of the $s_{\beta,j}\in B_j$, so that $U_{t_j}$ maps $H_1(\stab0,\F)$ to $H_1(\stab j,\F)$.  

More precisely, if $F_\bullet$ is a resolution of $\F$ by free $\F\gl(2,\Q)$-modules, then $T_s$ on $H_1(\gl(2,\Z),\MSQ)$
sends the class of a cycle (using an obvious notation for elements of $F_\bullet$ and $\MSQ$) $\sum_f f\otimes_{\F\gl(2,\Z)} m(f)$ to  the class of
$\sum_\alpha \sum_f s_\alpha f\otimes_{\F\gl(2,\Z)} s_\alpha m(f)$.
The partial Hecke operator $U_{t_j}$ sends the class of a cycle  
$\sum_f f\otimes_{\stab0} \lambda(f)$ 
 to the class of  $\sum_{s_{\beta,j}\in B_j}\sum_f s_{\beta,j}f\otimes_{\stab j} \lambda(f)$.
 
 The following lemma follows immediately from Theorem 3.1 in \cite{AD1}.
 
 \begin{lemma}\label{ad1}
$T_s$ composed with the Shapiro isomorphism equals $\sum_{j=1}^J U_{t_j}$.
 \end{lemma}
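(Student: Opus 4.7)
The plan is to verify that the setup of Section~\ref{coefficients} satisfies the hypotheses of Theorem~3.1 in \cite{AD1} and then invoke it. The essential ingredients required by that theorem are three: a decomposition of the coefficient module as a direct sum of induced modules (supplied by Lemma~\ref{induced}), a choice of single coset representatives $E$ for $\gl(2,\Z)s\gl(2,\Z)$, and a partition of $E$ compatible with the induced-module decomposition. The required compatibility was arranged in two stages: first by adjusting each $s_\alpha$ so that $s_\alpha x_0 = \zeta_\alpha x_\alpha$ with $x_\alpha \in \A$ and $\zeta_\alpha \in \zreps$, and then by partitioning $E = \coprod_{j=1}^{J} B_j$ via Lemma~\ref{L:partition}, with $B_j = \stab{j} t_j \stab{0} \cap E(\zeta_j x_j)$.

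Concretely, working at the chain level with a free $\F\gl(2,\Q)$-resolution $F_\bullet \to \F$, a class in $H_1(\stab 0,\F)$ is represented by a cycle $\xi = \sum_f f \otimes_{\F\stab 0} \lambda(f)$. Under the Shapiro isomorphism for the $x_0$-summand of $\MSQ$ given by the isomorphism $e$ of Lemma~\ref{induced}, $\xi$ corresponds to the cycle $\sum_f f \otimes_{\F\gl(2,\Z)} e(1\otimes \lambda(f))$ in $H_1(\gl(2,\Z),\MSQ)$. Applying $T_s$ yields $\sum_\alpha \sum_f s_\alpha f \otimes_{\F\gl(2,\Z)} s_\alpha\cdot e(1\otimes\lambda(f))$, and the partition $E = \coprod_j B_j$ regroups this as $\sum_{j=1}^{J} \bigl(\sum_{s_\alpha \in B_j} \sum_f s_\alpha f \otimes s_\alpha\cdot e(1\otimes\lambda(f))\bigr)$. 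Since each $s_\alpha \in B_j$ sends $x_0$ into $\zeta_j x_j$, the $j$-th parenthesized sum is supported in the $x_j$-summand of $\MSQ$, and the definition of $U_{t_j}$ together with the inverse Shapiro isomorphism for the $x_j$-summand identifies this $j$-th contribution with the Shapiro-image of $U_{t_j}(\xi)$.

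The main obstacle is the bookkeeping involving the central elements $\zeta_j$. By Lemma~\ref{L:Z-action}, $\zeta_j$ acts on $\MSQ$ by the scalar $q(\zeta_j)$, and the basis of Lemma~\ref{msqbasis} was weighted by $q^{-1}(\zeta)$ precisely so that the central factor $\zeta_j$ coming from $s_\alpha x_0 = \zeta_j x_j$ is cancelled by the coefficient built into the basis element representing the class of $x_j$. Lemma~\ref{L:stab-reps} then guarantees that for each $j$, the decomposition $\stab 0 = \coprod_{s_\alpha \in B_j}(t_j^{-1}\stab{j} t_j \cap \stab 0)\delta_\alpha$ gives exactly the combinatorial data needed to match the partial operator $U_{t_j}$ on the $\stab j$-side with the sum over $B_j$ on the $\gl(2,\Z)$-side. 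With these identifications in place, the equality $T_s = \sum_{j=1}^{J} U_{t_j}$ (after composition with Shapiro) is the content of Theorem~3.1 of \cite{AD1}, which we then cite to finish the proof.
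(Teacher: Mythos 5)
Your proposal is correct and matches the paper's approach: the paper's entire ``proof'' is the single sentence preceding the lemma, namely that it ``follows immediately from Theorem~3.1 in \cite{AD1},'' and you are invoking exactly the same theorem. The extra material you supply --- verifying that Lemma~\ref{induced} gives the induced-module decomposition, that Lemma~\ref{L:partition} gives a compatible partition of $E$, and the chain-level regrouping of $\sum_\alpha s_\alpha f\otimes s_\alpha m(f)$ into $\sum_j\sum_{s_\alpha\in B_j}$ --- is the intended content of that citation and is all consistent with how the paper uses the decomposition in the subsequent Theorem~\ref{Thm:partial-hecke}. One small caveat: the cancellation of the central factor $q(\zeta_j)$ that you mention is not actually needed to prove this particular lemma (the partial operator $U_{t_j}$ is defined at the chain level without any $q(\zeta_j)$, and the factor only surfaces in Theorem~\ref{Thm:partial-hecke} when comparing $U_{t_j}$ to the transfer--conjugation--corestriction composite); it does no harm, but it belongs to the next proof rather than this one.
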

 
 From now on we will also use  $T_s$ to stand for $\sum_{j=1}^J U_{t_j}$, depending on the context.

Next, we write the partial Hecke operator in terms of the transfer, corestriction, and an adjoint map.

\begin{theorem}\label{Thm:partial-hecke} Recall that $x_0\in\mathcal A$, and let $z_0$ be the generator of $H^1(\stab0,\F)$ chosen in Definition~\ref{Def:zx}.  Then $U_{t_j}:H_1(\stab0,\F)\to H_1(\stab j,\F)$ is given as the composition of the three maps
\[H_1(\stab0,\F)\to H_1(\stab0\cap s_j^{-1}\stab js_j,\F)
\xrightarrow{\phi_j}
H_1(s_j\stab0s_j^{-1}\cap \stab j,\F)\to H_1(\stab j,\F),\]
where the first map is the transfer, the second map is the map induced on homology by the pair of maps $(\Ad t_j,\tau_j)$, where $\Ad t_j$ is conjugation by $t_j$ on the group, and $\tau_j$ is multiplication by $q(\zeta_j)$ on the coefficient module, and the third map is corestriction.
\end{theorem}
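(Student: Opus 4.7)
The plan is to trace the partial Hecke operator $U_{t_j}$ through the Shapiro isomorphism at the chain level and recognize it as the composition of the three claimed maps.

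First, I would make the Shapiro isomorphism explicit on chains. Let $F_\bullet$ be a free $\F[\gl(2,\Q)]$-resolution of $\F$. By Lemma~\ref{induced}, the basis element $1\otimes 1\in\ind_{\stab0}^{\gl(2,\Z)}\F$ corresponds under $e$ to $e_{x_0}:=\sum_{\zeta\in\zreps}q(\zeta^{-1})\zeta x_0\in\MSQ$, so Shapiro's lemma identifies a chain $f\otimes 1\in F_\bullet\otimes_{\stab0}\F$ with $f\otimes e_{x_0}\in F_\bullet\otimes_{\gl(2,\Z)}\MSQ$; an analogous identification holds for $x_j$. Taking $f_0$ to be a chain representative of $z_0$, the cycle $[f_0\otimes e_{x_0}]$ represents $z_0$ on the $\MSQ$ side.

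Second, I would establish the coefficient twist. For every $s_\beta\in B_j$ we have $s_\beta x_0=\zeta_j x_j$, and since $\zeta_j\in Z$ commutes with the $S_0$-action on $\XSQ$, the action of $s_\beta$ on $e_{x_0}$ is $\sum_\zeta q(\zeta^{-1})\zeta\zeta_j x_j$. Re-indexing $\zeta\mapsto\overline{\zeta\zeta_j}$ (as in the proof of Lemma~\ref{L:Z-action}) and using $q(\overline{\zeta\zeta_j})=q(\zeta)q(\zeta_j)$, this simplifies to
\[s_\beta\cdot e_{x_0}=q(\zeta_j)\cdot e_{x_j}\in\MSQ.\]
This is the source of the scalar $\tau_j=q(\zeta_j)$ in the theorem.

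Third, using Lemma~\ref{L:stab-reps} to write each $s_\beta=\eta_\beta t_j\delta_\beta$ with $\eta_\beta\in\stab j$ and $\delta_\beta\in\stab0$, I would apply $U_{t_j}$ at the chain level:
\[\sum_{s_\beta\in B_j}s_\beta f_0\otimes s_\beta e_{x_0}=q(\zeta_j)\sum_\beta\eta_\beta t_j\delta_\beta f_0\otimes e_{x_j}.\]
Pulling back to $F_\bullet\otimes_{\stab j}\F$ via Shapiro for $x_j$ absorbs the $\eta_\beta$, leaving
\[U_{t_j}(f_0\otimes 1)=q(\zeta_j)\sum_\beta t_j\delta_\beta f_0\otimes 1\in F_\bullet\otimes_{\stab j}\F.\]
By Lemma~\ref{L:stab-reps}, the $\delta_\beta$ are right coset representatives of $B:=\stab0\cap t_j^{-1}\stab j t_j$ in $\stab0$, so the standard chain-level formula for the transfer shows that $\sum_\beta\delta_\beta f_0\otimes 1$ represents the transfer of $z_0$ into $H_1(B,\F)$. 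Left multiplication by $t_j$ together with the scalar $q(\zeta_j)$ realizes the map $\phi_j=(\Ad t_j,\tau_j)$ from $H_1(B,\F)$ onto $H_1(t_jBt_j^{-1},\F)=H_1(t_j\stab0 t_j^{-1}\cap\stab j,\F)$, and the final reinterpretation in $F_\bullet\otimes_{\stab j}\F$ is the corestriction induced by the inclusion $t_j\stab0 t_j^{-1}\cap\stab j\hookrightarrow\stab j$.

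The main obstacle is the chain-level identification of $\sum_\beta\delta_\beta f_0\otimes 1$ with the transfer map, which requires invoking a standard but technical fact from group cohomology. In our cyclic setting one can verify it concretely against Lemma~\ref{transfer}(i): with $B=\langle g_{x_0}^{d_j}\rangle\subset\stab0=\langle g_{x_0}\rangle$ of index $d_j=|B_j|$ and coset representatives $\delta_\beta=g_{x_0}^\beta$ for $0\le\beta<d_j$, the 1-cycle $\sum_\beta g_{x_0}^\beta\cdot[g_{x_0}]\in C_1(\stab0)\otimes_B\F$ is homologous to $[g_{x_0}^{d_j}]$, which represents $g_{x_0}^{d_j}\otimes 1\in H_1(B,\F)$. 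A secondary bookkeeping point is that the decomposition $s_\beta=\eta_\beta t_j\delta_\beta$ is not unique, but any ambiguity is killed by the $B$-coinvariants on the transfer side and $\stab j$-coinvariants on the corestriction side.
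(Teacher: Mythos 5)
Your proposal is correct and takes essentially the same route as the paper's proof: it expands the partial Hecke operator on chains via the explicit $e_*$-Shapiro isomorphism, uses the decomposition $s_\beta=\eta_\beta t_j\delta_\beta$ from Lemma~\ref{L:stab-reps}, establishes the scalar $q(\zeta_j)$ via the same coefficient computation $s_\beta e_{x_0}=q(\zeta_j)e_{x_j}$, and then recognizes transfer, conjugation, and corestriction — the only difference being that you unfold $U_{t_j}$ and identify the pieces, whereas the paper unfolds the composite $w_j$ and matches it to $U_{t_j}$.
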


\begin{proof}
We take a resolution $F_\bullet$ of $\F$ by free $\F[\gl(2,\Q)]$-modules and let $Z_0$ be a cycle representing $z_0$.  The map in Shapiro's lemma taking $H_1(\stab0,\F)$ into $H_1(\GL(2,\Z),\MSQ)$  sends $Z_0$ to $Z_0\otimes e_*(I\otimes 1)$.  Then, on the level of cycles, we have
\[T_s(Z_0\otimes e_*(I\otimes 1))=\sum_{j=1}^J\sum_{s_{\beta}\in B_j}s_{\beta}Z_0\otimes s_{\beta}e_*(I\otimes 1).\]
If we let $w_j$ be the composition of the three maps in the statement of the theorem, then using Lemma~\ref{ad1}  we see that what we need
 to show is that
\[w_j(Z_0)\otimes e_*(I\otimes 1)=\sum_{s_{\beta}\in B_j}s_{\beta}Z_0\otimes s_{\beta}e_*(I\otimes 1).\]

On the level of cycles, using Lemma~\ref{L:stab-reps}, the transfer of $Z_0$ is
\[\sum_{s_{\beta}\in B_j}\delta_{\beta}Z_0,\]
where we recall that for $s_\beta\in B_j$, we have written $s_\beta=\eta_\beta t_j\delta_\beta$, with $\eta_\beta\in\stab j$ and $\delta_\beta\in\stab0$.
Applying $\phi_j$ yields
\[q(\zeta_j)\sum_{s_\beta\in B_j}(t_j\delta_\beta t_j^{-1})(t_jZ_0).\]
Finally, applying the corestriction gives
\[q(\zeta_j)\sum_{s_\beta\in B_J}t_j\delta_\beta Z_0=q(\zeta_j)\sum_{s_\beta\in B_j}s_\beta Z_0.\]

We see that it suffices to show that $q(\zeta_j)e_*(I\otimes 1)=s_\beta e_*(I\otimes 1)$.

We have that
\begin{align*}
s_\beta e_*(I\otimes 1)&=s_\beta\sum_{\zeta_r\in\zreps}q(\zeta_r^{-1})\zeta_r\xS_0\cr
&=\sum_{\zeta_r\in\zreps}q(\zeta_r^{-1})\zeta_rs_\beta\xS_0\cr
&=\sum_{\zeta_r\in\zreps}q(\zeta_r^{-1})\zeta_r\zeta_j\xS_j\cr
&=\zeta_j\sum_{\zeta_r\in\zreps}q(\zeta_r^{-1})\zeta_r\xS_j\cr
&=q(\zeta_j)\sum_{\zeta_r\in\zreps}q(\zeta_r^{-1})\zeta_r\xS_j\cr
&=q(\zeta_j)e_*(I\otimes 1).\qedhere
\end{align*}
\end{proof}

We now apply Theorem~\ref{Thm:partial-hecke} to compute $U_{t_j}(z_0)$.

\begin{corollary}\label{C:partial-hecke}
The partial Hecke operator $U_{t_j}$ in Theorem~\ref{Thm:partial-hecke} satisfies
\[U_{t_j}(z_0)=e_jq(\zeta_j)z_j\]
where $e_j=[\stab j:t_j\stab0t_j^{-1}\cap\stab j]$.
\end{corollary}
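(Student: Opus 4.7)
The plan is to apply Theorem~\ref{Thm:partial-hecke}, which decomposes $U_{t_j}$ as a composition (transfer, the map $\phi_j$ induced by $(\Ad t_j,\,\cdot\, q(\zeta_j))$, and corestriction), and to track what happens to $z_0$ at each stage. The key tools will be Lemma~\ref{transfer} (for transfer and corestriction between infinite cyclic groups) and Lemma~\ref{L:stab-reps} (for the index $d_j$).

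First, set $G := \stab{0} \cap t_j^{-1} \stab{j} t_j$ and $G' := t_j \stab{0} t_j^{-1} \cap \stab{j}$; these are infinite cyclic subgroups, and conjugation by $t_j$ restricts to an isomorphism $G \xrightarrow{\sim} G'$. Lemma~\ref{L:stab-reps} gives $[\stab{0}:G] = d_j$, so $G = \langle g_0^{d_j} \rangle$, and the definition of $e_j$ yields $G' = \langle g_j^{\pm e_j} \rangle$. By Lemma~\ref{transfer}(i), the transfer will send $z_0 = g_0 \otimes 1$ to $g_0^{d_j}\otimes 1$; then $\phi_j$ will send this to $q(\zeta_j)\cdot(\Ad t_j(g_0^{d_j})\otimes 1) \in H_1(G',\F)$; finally, Lemma~\ref{transfer}(ii) will turn a generator of $G'$ into $\pm e_j\cdot z_j$.

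The hard part will be identifying $\Ad t_j(g_0^{d_j})$ as precisely $g_j^{e_j}$ (and not $g_j^{-e_j}$). To do this, I will pick representatives $y_0,y_j\in Y$ of $x_0,x_j$ and use $t_j x_0 = \zeta_j x_j$ to write $t_j y_0 = y_j c$ for some $c\in K^\times$. A short computation shows that $t_j g_0^a t_j^{-1}$ acts on $y_j$ by multiplication by $\delta_0^a$, where $\delta_0 = \pm\epsilon^{m_0}$ is the eigenvalue of $g_0$ on $y_0$ from Theorem~\ref{stabgen}. Writing $t_j g_0^{d_j} t_j^{-1} = g_j^k$ and comparing with $g_j y_j = y_j\delta_j$, where $\delta_j=\pm\epsilon^{m_j}$, yields $\delta_0^{d_j} = \delta_j^k$; taking exponents in $\OK^\times$ forces $k = d_jm_0/m_j$, which is a positive integer, and then $[\stab{j}:\langle g_j^k\rangle] = k$ forces $k = e_j$. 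Combining the three maps will yield $U_{t_j}(z_0) = e_j q(\zeta_j) z_j$, as claimed; the sign verification just sketched is the only non-formal step.
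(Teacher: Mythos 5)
Your proof is correct and follows the same line as the paper: apply Theorem~\ref{Thm:partial-hecke}, then chase $z_0$ through the transfer, the map $\phi_j$, and the corestriction, using Lemma~\ref{transfer} for the first and last maps and Lemma~\ref{L:stab-reps} to identify the index $d_j$. Your explicit verification that $\Ad t_j(g_0^{d_j})=g_j^{e_j}$ with a \emph{positive} exponent $e_j$ (via the eigenvalue computation $\delta_0^{d_j}=\delta_j^k$) is a useful addition: the paper's own proof states this identification without justification (and in fact has a small typo, writing $g_j^{d_j}$ where $g_j^{e_j}$ is meant), with the needed positivity argument appearing only implicitly in the proof of Lemma~\ref{H5}.
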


\begin{proof}
By definition, $d_j=[\stab0:t_j^{-1}\stab jt_j\cap\stab 0]$ and $e_j=[\stab j:t_j\stab0t_j^{-1}\cap\stab j]$. Hence
 $g_0^{d_j}$ is a generator of $\stab0\cap t_j^{-1}\stab jt_j$, and
  $g_j^{d_j}$ is a generator of $t_j\stab0t_j^{-1}\cap\stab j$.  
  Considering $H_1(\stab0,\F)$ as $\stab0\otimes_\Z F$ we have $z_0=g_0\otimes 1$.  Hence, by Lemma~\ref{transfer}(i), the transfer takes $z_0$ to $g_0^{d_j}\otimes 1$, which is then mapped to $g_j^{d_j}\otimes q(\zeta_j)$ by $\phi_j$.  Finally, by Lemma~\ref{transfer}(ii), the corestriction maps this to $e_j(g_j\otimes q(\zeta_j))=e_jq(\zeta_j)(g_j\otimes 1)=e_jq(\zeta_j)z_j$. 
\end{proof}

We now compute the value of $e_j$.  Recall that $t_j\xS_0=\zeta_j\xS_j$.  
For $j=0,\dots,J$, choose $y_j\in Y$ such that $\xS_j$ is represented by $y_j$, and recall that $\stab j$ is the stabilizer of
 $\xS_j$ in $\gl(2,\Z)$ and $\epsilon$ is the fundamental unit of $\OK$ which we chose at the beginning of Section~\ref{S:Lattices}. Let $g_j\in\Gamma_j$ and $m_j\in\Z$ be defined as in Definition~\ref{D:generator} (with $H=\KSQ$).  Then $g_j$ is a generator of $\Gamma_j$ and $m_j>0$.  Set $\delta_j=\pm\epsilon^{m_j}$, where the sign is chosen so that $g_jy_j=y_j\delta_j$ and $\delta_j\in\KSQ$.
 
\begin{lemma}\label{H5} With notation as above, 
\[e_j=\lcm(m_0,m_j)/m_j,\qquad d_j=\lcm(m_0,m_j)/m_0,\]
 and $e_jm_j=d_jm_0$.
\end{lemma}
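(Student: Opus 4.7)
The plan is to compute the two intersections $t_j\stab 0 t_j^{-1}\cap\stab j$ and $t_j^{-1}\stab j t_j\cap\stab 0$ explicitly as subgroups of the cyclic groups $\stab j$ and $\stab 0$. Once their generators are identified, the indices $e_j$ and $d_j$, and the identity $e_jm_j=d_jm_0$, follow immediately.

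First I translate the relation $t_j\xS_0=\zeta_j\xS_j$ to the level of representatives: choosing $y_j\in Y$ for $\xS_j$ and writing $\zeta_j=r_jI$, there is some $c_j\in\KSQ$ with $t_jy_0=y_j\lambda$, where $\lambda=r_jc_j\in K^\times$. Since right multiplication by scalars in $K^\times$ commutes with left multiplication by $\gl(2,\Q)$-matrices, a direct computation gives
\[(t_jg_0^kt_j^{-1})y_j=y_j\delta_0^k\qquad\text{and}\qquad(t_j^{-1}g_j^nt_j)y_0=y_0\delta_j^n\]
for all integers $k,n$. In particular, the matrix $t_jg_0^kt_j^{-1}$ is $r_{y_j}(\delta_0^k)$, and similarly on the other side.

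Next I characterize membership of this matrix in $\stab j$. The matrix $r_{y_j}(\delta_0^k)$ lies in $\gl(2,\Z)$ precisely when $\delta_0^k\Lambda_{y_j}=\Lambda_{y_j}$, and once it does, it automatically fixes $\xS_j$ because $\delta_0^k\in\KSQ$. Writing $\delta_0=\pm\epsilon^{m_0}$, the sign is irrelevant in the lattice condition, which becomes $\epsilon^{km_0}\Lambda_{y_j}=\Lambda_{y_j}$. Let $\mu_j$ be the minimal positive integer with $\epsilon^{\mu_j}\Lambda_{y_j}=\Lambda_{y_j}$, provided by Lemma~\ref{Lemma:mL}; then the condition is $\mu_j\mid km_0$, whose smallest positive solution is $d_j=\mu_j/\gcd(\mu_j,m_0)=\lcm(\mu_j,m_0)/m_0$. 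The symmetric computation gives $e_j=\lcm(\mu_0,m_j)/m_j$.

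Finally I eliminate $\mu_0,\mu_j$ in favor of $m_0,m_j$. By its defining conditions, $m_j$ is the smallest positive integer divisible by $\mu_j$ such that $\pm\epsilon^{m_j}\in\KSQ$; since the set $\{m>0:\pm\epsilon^m\in\KSQ\}$ equals $i^S\Z$ (apply Lemma~\ref{is} to the idealistic class $\basepoint$, for which $\mu_{\basepoint}=1$), this reads $m_j=\lcm(\mu_j,i^S)$, and similarly $m_0=\lcm(\mu_0,i^S)$. Because $i^S\mid m_0$ and $i^S\mid m_j$ by Lemma~\ref{is}(i), the factor $i^S$ is absorbed into the LCM:
\[\lcm(m_0,m_j)=\lcm(\mu_j,i^S,m_0)=\lcm(\mu_j,m_0),\]
and symmetrically $\lcm(m_0,m_j)=\lcm(\mu_0,m_j)$. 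Substituting into the formulas above yields $d_j=\lcm(m_0,m_j)/m_0$, $e_j=\lcm(m_0,m_j)/m_j$, and $e_jm_j=\lcm(m_0,m_j)=d_jm_0$. The only real subtlety is recognizing that the sign ambiguity in $\delta=\pm\epsilon^m$ is harmless in the lattice condition, and that the $\KSQ$-dependent factor $i^S$ divides both $m_0$ and $m_j$, so it drops out of the LCM.
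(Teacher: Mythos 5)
Your argument is correct and its backbone is the same as the paper's: identify the generators of the two intersection subgroups inside the cyclic groups $\stab0$ and $\stab j$ by computing how the conjugates $t_jg_0^kt_j^{-1}$ and $t_j^{-1}g_j^nt_j$ act on the representative vectors, and read off the indices from the exponents. The one place you diverge is in characterizing membership of $r_{y_j}(\delta_0^k)$ in $\stab j$: the paper goes directly to ``$\delta_0^k$ is a power of $\delta_j$'' and disposes of the sign ambiguity with the observation that $-1\notin\KSQ$, concluding immediately that the condition is $m_j\mid km_0$; you instead split the membership condition into its lattice-stabilization part ($\mu_j\mid km_0$) and its $\KSQ$-part (automatic), which forces you to prove afterwards that $\lcm(\mu_j,m_0)=\lcm(m_j,m_0)$ by writing $m_j=\lcm(\mu_j,i^S)$ and noting $i^S\mid m_0$. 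Both are sound; the paper's route is shorter, while yours makes explicit exactly where the global constant $i^S$ enters and why it is harmless, which is perhaps more illuminating if a reader is unsure why $m_j$, rather than the raw lattice period $\mu_j$, is the right quantity. One small citation slip: the fact that $\{m>0:\pm\epsilon^m\in\KSQ\}=i^S\Z_{>0}$ is the Corollary following Definition~\ref{D:idealistic} applied to the idealistic class $\basepoint$ (together with the observation that this set is the positive part of a subgroup of $\Z$), not Lemma~\ref{is} per se, though the conclusion you draw from it is right.
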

\begin{proof}
First, 
$g_0y_0=y_0\delta_0.$  Since $t_j\xS_0=\zeta_j\xS_j$,  we have $t_jy_0=\alpha_j\zeta_jy_j$ for some $\alpha_j\in\KSQ$.  Hence, $t_j^{-1}y_j=\alpha_j^{-1}\zeta_j^{-1}y_0$.  

It follows  that $t_jg_0t_j^{-1}y_j=y_j\delta_0$.  In addition, $g_jy_j=y_j\delta_j$, 
$g_0$ generates $\Gamma_0$,  and $g_j$ generates $\Gamma_j$.  

We may choose a generator $h$ of $\Gamma_j\cap t_j\Gamma_0t_j^{-1}$, so that $h$ will be the smallest power of $t_jg_0t_j^{-1}$ that is contained in $\Gamma_j$.  This power must be the smallest positive integer $k$ such that $\delta_0^k$ is a power of $\delta_j$.  Since $\delta_0,\delta_j\in\KSQ$ and $-1\notin\KSQ$, we see that $k$ will be the smallest positive integer such that $km_0$ is a multiple of $m_j$.  Hence, $km_0=\lcm(m_0,m_j)$, and we see that
\[h y_j=y_j\left(\pm\epsilon^{\lcm(m_0,m_j)}\right).\]
It follows that 
\[e_j=\lcm(m_0,m_j)/m_j.\]
Reversing the roles of $\Gamma_0$ and $\Gamma_j$ and switching $t_j$ and $t_j^{-1}$, we obtain
\[d_j=\lcm(m_0,m_j)/m_0.\qedhere\]
\end{proof}

\section{Elements of $H^1(\gl(2,\Z), \MSQ^*)$ interpreted as functions on lattices}

We now interpret the cohomology of the dual of $\MSQ$ as a collection of functions on a space of lattices.

\begin{definition}
Let $\Phi$ be a function from lattices in $K$ to $\F$.  We will say that $\Phi$ is $q$-homogeneous if $\Phi(\alpha L)=q(\alpha I)\Phi(L)$ for all $\alpha\in \Q^\times\cap K_{S_0}$ and all lattices in $L$.

We will say that $\Phi$ is $\KSQ$-invariant if $\Phi(\alpha L)=\Phi(L)$ for all $\alpha\in\KSQ$ and all lattices $L$.
\end{definition}

\begin{remark} Note that since $q$ is trivial on $\KSQ$, a function $\Phi$ can be both $q$-homogeneous and $\KSQ$-invariant.  In addition, we note that since $K$ is a real quadratic field, $q(-I)=1$.  If this were not the case, the fact that $-L=L$ for any lattice $L$ in $K$ would force all $q$-homogeneous functions to be identically 0.
\end{remark}

\begin{lemma}\label{Lemma:homothety-to-cohomology}
There is an isomorphism between $H^1(\gl(2,\Z),\MSQ^*)$ and the vector space of $\F$-valued functions on lattices in $K$ that are $q$-homogeneous and $\KSQ$-invariant.
\end{lemma}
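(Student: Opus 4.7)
The plan is to identify both $H^1(\gl(2,\Z),\MSQ^*)$ and the space $\mathcal F$ of $q$-homogeneous, $\KSQ$-invariant lattice functions with the same vector space $\prod_{x\in\A}\F$ of all $\F$-valued functions on $\A$. For the cohomology side, I would first invoke universal coefficients: since $\F$ is a field, $\Hom_\F(-,\F)$ is exact and there is a natural isomorphism $H^n(\gl(2,\Z),\MSQ^*)\cong H_n(\gl(2,\Z),\MSQ)^*$ for every $n$. Combining Lemma~\ref{induced}, Shapiro's lemma, and Lemma~\ref{L:X^S-stab} (each $\stab{x}$ is infinite cyclic, with canonical homology generator $z_x$ by Definition~\ref{Def:zx}) gives
\[
H_1(\gl(2,\Z),\MSQ)\;\cong\;\bigoplus_{x\in\A}H_1(\stab{x},\F)\;=\;\bigoplus_{x\in\A}\F\,z_x;
\]
dualizing yields $H^1(\gl(2,\Z),\MSQ^*)\cong\prod_{x\in\A}\F$, where a cohomology class corresponds to the tuple of its pairings with the $z_x$.

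For the function side, fix a representative $y_x\in Y$ of each $x\in\A$, and send $\Phi\in\mathcal F$ to $(\Phi(\Lambda_{y_x}))_{x\in\A}$; $\KSQ$-invariance of $\Phi$ makes this value independent of the choice of $y_x$. Conversely, given $(v_x)_{x\in\A}$ I would define $\Phi$ on a lattice $L=\Lambda_y$ as follows: since $\A$ is a complete set of $Z\cdot\gl(2,\Z)$-orbit representatives of $\XSQ=Y/\KSQ$, we may write $y=\zeta\gamma y_x\alpha$ with $\zeta\in Z$, $\gamma\in\gl(2,\Z)$, $x\in\A$, and $\alpha\in\KSQ$, and put $\Phi(L)=q(\zeta)v_x$. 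Two such decompositions of the same $y$ differ by an element of $\stab{x}$; using Lemma~\ref{L:X^S-stab} to express that element as $r_{y_x}(\beta)$ with $\beta\in\KSQ$, Lemma~\ref{Lemma:Z-KSQ} then forces the discrepancy in $\zeta$ to lie in the kernel of $q$. Hence $\Phi(L)$ is well-defined, and its $\KSQ$-invariance and $q$-homogeneity are built in: right multiplication of $y$ by $\alpha\in\KSQ$ is absorbed into the $\KSQ$-factor of the decomposition, while replacing $y$ by $ry$ for $r\in\Q^\times\cap K_{S_0}$ multiplies $\zeta$ by $rI$ and hence $\Phi(L)$ by $q(rI)$.

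Composing these two identifications gives the desired isomorphism. The main obstacle, in my view, is the well-definedness step in the inverse construction of $\Phi$: this is where Lemma~\ref{Lemma:Z-KSQ} does the real work, reconciling the $\gl(2,\Z)$-invariance of $\Phi$ with the $q$-equivariance under $Z$. The rest is a routine chase through Shapiro's lemma and $\F$-linear duality, and one should also verify that the resulting isomorphism is independent of the auxiliary choices of $\A$ and of the representatives $y_x$.
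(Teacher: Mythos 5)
Your argument is correct and follows essentially the same route as the paper: duality over $\F$ identifies $H^1(\gl(2,\Z),\MSQ^*)$ with $H_1(\gl(2,\Z),\MSQ)^*$, which decomposes via Lemma~\ref{induced} and Shapiro's lemma into a product over $\A$, and your $\zeta$--$\gamma$--$\alpha$ decomposition together with Lemma~\ref{Lemma:Z-KSQ} for well-definedness of the inverse map is exactly the paper's dictionary between functions on $\A$ and $q$-homogeneous, $\KSQ$-invariant lattice functions. The remark about independence of $\A$ and the $y_x$ is a sensible extra observation but not required, since the lemma only asserts the existence of an isomorphism.
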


\begin{proof} 
Choose a set $\A$ of representatives of the $Z\,\gl(2,\Z)$-orbits in $\XSQ$. This choice of $\A$ yields an isomorphism of $\gl(2,\Z)$-modules
\[f:\MSQ\to\bigoplus_{\xS\in\mathcal A}\F\,\gl(2,\Z)\otimes_{\F\stab{\xS}}\F.\]

This induces an isomorphism (via Shapiro's Lemma)
\begin{align*}
H_1(\gl(2,\Z),\MSQ)&\cong\bigoplus_{\xS\in\mathcal A}H_1(\gl(2,\Z),\F\,\gl(2,\Z)\otimes_{\F\stab{\xS}}\F)\cr
&\cong \bigoplus_{\xS\in\mathcal A}H_1(\stab{\xS},\F)\cr
&\cong\bigoplus_{\xS\in\mathcal A}\F.
\end{align*}
Using the natural duality between $H_1(\gl(2,\Z),\MSQ)$ and $H^1(\gl(2,\Z),\MSQ^*)$, we see that determining an element of $H^1(\gl(2,\Z),\MSQ^*)$ is the same as giving a function from $\mathcal A$ to $\F$.  

We now show that there is an isomorphism between the vector space of functions from $\mathcal A$ to $\F$ and the vector space of $\KSQ$-invariant $q$-homogeneous functions on lattices in $K$.

Let $h$ be any $q$-homogeneous $\KSQ$-invariant function on lattices in $K$.  Since every element in $\A$ can be lifted uniquely to a $\KSQ$-homothety class of lattices in $K$, $h$ defines a function $g$ on $\mathcal A$.  
Namely, given $x\in\mathcal A$, lift $x$ to $y\in Y$ and set $g(x)=h(\Lambda_y)$, where $\Lambda_y$ is the lattice spanned by the entries of $y$.  Since $y$ is well-defined up to $\KSQ$-homotheties and $h$ is $\KSQ$-invariant, this gives a well-defined function $g$.

Given a function $g$ on $\mathcal A$ and a lattice $L$ in $K$, $L$ corresponds (by choosing a basis $y=\transpose{(a,b)}\in Y$) to an element $x'\in \XSQ$, which lies in the $Z\,\gl(2,\Z)$-orbit of a unique $x\in\mathcal A$.  Let $x'=\zeta\gamma x$, with $\zeta\in Z$ and $\gamma\in\gl(2,\Z)$.  Define $h(L)=q(\zeta)g(x)$. Note that if $x'=\zeta'\gamma'x$ with $\zeta'\in Z$ and $\gamma'\in\gl(2,\Z)$, then we have $(\zeta^{-1}\zeta')(\gamma^{-1}\gamma')x=x$.  Hence, $(\zeta^{-1}\zeta')(\gamma^{-1}\gamma')=r_y(\alpha)$ for some $\alpha\in \KSQ$.  By Lemma~\ref{Lemma:Z-KSQ}, this implies that $q(\zeta)=q(\zeta')$, so $h$ is well defined.   Then $h$ is a $q$-homogeneous $\KSQ$-invariant function on lattices.

These two maps (taking $h$ to $g$ and $g$ to $h$) are easily seen to be inverses, and preserve addition and scalar multiplication.
\end{proof}

\section{The Branched Bruhat-Tits graph and the Laplacian}

In order to construct functions on lattices that are eigenfunctions of the Hecke operators, we will use a modification of the Bruhat-Tits building \cite{Casselman,Serre}, in which we lift the Bruhat-Tits building to a finite branched cover.

For each prime $\ell$ unramified in $K$, let $K_\ell$ denote $K\otimes\Q_\ell$.  Then $K_\ell$ is a two-dimensional vector space over $\Q_\ell$.

\begin{definition}\label{identify}
If $\ell$ is inert, then $K_\ell$ is a quadratic field extension of $\Q_\ell$.  We fix the integral basis $\{1,\omega\}$ of $K$, and we identify $K_\ell$ with $\Q_\ell^2$ by identifying $1$ and $\omega$ with the standard basis elements $e_1,e_2\in\Q_\ell^2$.

If $\ell$ splits in $K$, then $(\ell)=\lambda\lambda'$ for prime ideals $\lambda,\lambda'$ in $\OK$ lying over $\ell$.  Each of the completions $K_\lambda$ and $K_\lambda'$ is then isomorphic to $\Q_\ell$.  Restricting these isomorphisms to $K$, we obtain two distinct Galois conjugate embeddings $i_\lambda,i_{\lambda'}:K\to\Q_\ell$.  We then identify $K_\ell=K\otimes_\Q\Q_\ell$ with $\Q_\ell^2$ via the map taking
\[t\otimes 1\mapsto (i_\lambda(t),i_{\lambda'}(t)).\]  We abbreviate the notation by writing $t\mapsto (t,t')$.
\end{definition}

\begin{definition} 
By a lattice in $K_\ell$, we will mean a rank two $\Z_\ell$-submodule of $K_\ell$.
\end{definition}

If $L$ is a lattice in $K$, then $L_\ell=L\otimes_\Z\Z_\ell$ is a lattice in $K_\ell$.

\begin{definition} Let $\ell$ be a prime, and $n$ a positive integer.  Denote the elements of $\Q_\ell^\times$ with $\ell$-adic valuation divisible by $n$ by $V_n$.  We note that $V_n$ is a subgroup of index $n$ of $\Q_\ell^\times$.
\end{definition}

\begin{definition} Let $L_1$ and $L_2$ be lattices in $K_\ell$.  We say that $L_1$ and $L_2$ are {\em $n$-homothetic} if $L_1=\alpha L_2$ for some $\alpha\in V_n$.  Then $n$-homothety is an equivalence relation, and we call an equivalence class an $n$-homothety class of lattices in $K_\ell$.
\end{definition}

\begin{definition} Let $n$ a positive integer, $K$ a real quadratic field, and  $\ell$ a prime unramified in $K$ .  The branched Bruhat-Tits graph $\T_\ell^n$ is the graph whose vertices are $n$-homothety classes of lattices in $K_\ell$.  Two vertices are joined by an edge if there are representative lattices $L_1$ and $L_2$ of the vertices such that $L_2\subset L_1$ or $L_1\subset L_2$ with index $\ell$.
\end{definition}

\begin{remark} The Bruhat-Tits tree is a special case of the branched Bruhat-Tits graph in which $n=1$.  When $n=1$, we may denote $\T_\ell^n$ by $\T_\ell$. When $n>1$, we will typically write vertices of $\T_\ell^n$ with a superscript $n$, i.e.{} $t^n\in\T_\ell^n$.
\end{remark}

\begin{definition} Let $L$ be a lattice in $K_\ell$.  Denote the vertex of $\T_\ell^n$ represented by $L$ by $\varpi(L)$.  Denote the vertex of $\T_\ell$ represented by $L$ by $\pi(L)$.  Given a vertex $t^n\in\T_\ell^n$, there is a unique vertex $s\in\T_\ell$ containing $t^n$; we write $s=\pi(t^n)$. 
\end{definition}

Note that for any lattice $L$ with $\varpi(L)=t^n$, $\pi(t^n)=\pi(L)$.  To keep our notation less cluttered, 
if $L$ is a lattice in $K_\ell$, we will often denote $\varpi(L)$ by $L$,
 as long as the context makes this usage clear.

\begin{remark} We note that for any vertex $t\in\T_\ell$, there are exactly $n$ vertices $t^n\in \T_\ell^n$ with $\pi(t^n)=t$.  If $L$ is a lattice in $K_\ell$ representing $t$, these $n$ vertices of $\T_\ell^n$ are represented by
\[L,\ell L,\ldots,\ell^{n-1}L.\]
\end{remark}

\begin{definition} If $t$ is a vertex of $\T_\ell$, we will call the set $\{t^n\in\T_\ell^n: \pi(t^n)=t\}$ the {\it fiber} of $t$ and also the {\it fiber} of $t^n$ for any $t^n$ in that set .\end{definition}

\begin{definition}\label{D:idealistic}
A vertex $t^n$ is {\em idealistic} if $t^n$ is the $n$-homothety class of $I_\ell$ for some fractional ideal $I$ of $K$.  
\end{definition}

We now review some facts about completions $L_\ell$ of lattices in $K$.  Let $\ell$ be a prime of $\Q$.  

By \cite[V.2, Corollary to Theorem 2]{Weil}, the operations of sum and intersection of lattices in $K$ commute with completion at $\ell$.  In addition, by \cite[V.3, Theorem 2]{Weil}, a lattice $L$ in $K$ is determined by its set of completions $L_w$ for all finite places $w$ of $\Q$.  In fact
\[L=\bigcap_w K\cap L_w.\]
Finally, completion at a finite place $w$ of finitely generated $\Z$-modules is an exact functor \cite[Theorem 7.2]{Eisenbud}.

Applying these facts to fractional ideals of $K$, we note that if $I$ is an ideal of $\OK$ of norm prime to $\ell$, then $I_\ell$ is an ideal of $\OK_\ell$ of index prime to $\ell$, so $I_\ell=\Ol$.  In addition, multiplication of relatively prime ideals (i.e.~intersection) commutes with completion at $\ell$.  Hence, for an ideal $I$, the completion $I_\ell$ depends only on the factors of $I$ of $\ell$-power norm. 

Now suppose that $t^n\in\T_\ell^n$ is idealistic.  Then we may assume that $t^n$ is represented by an ideal $I_\ell$, where $I$ is an ideal with $\ell$-power norm in $\OK$.  If $\ell$ is inert in $K$, such an $I$ must be principal, so $I_\ell$ is $\Q_\ell$-homothetic to $\Ol$.  Hence, $t^n$ is idealistic if and only if $\pi(t^n)$ is represented by $\Ol$.

On the other hand, if $\ell$ splits in $K$, then $\ell\OK=\lambda\lambda'$, where $\lambda,\lambda'$ are prime ideals of $\OK$ lying over $\ell$.   We then see that  $t^n\in\T_\ell^n$ is idealistic if and only if $\pi(t^n)$ is represented by an ideal of the form $\lambda^k$ or $(\lambda')^k$.  In particular, if $\pi(t_1^n)=\pi(t_2^n)$, then $t_1^n$ and $t_2^n$ are either both idealistic, or both nonidealistic.

\begin{lemma} Let $L_1\supset L_2$ be lattices in $K_\ell$ with $[L_1:L_2]=\ell$.  Let $t_1^n=\varpi(L_1)\in\T_\ell^n$.  Then there are precisely two vertices $t_2^n,t_3^n\in\T_\ell^n$ with $\pi(t_2^n)=\pi(t_3^n)=\pi(L_2)$, such that there is an edge between the two pairs $(t_1^n,t_2^n)$ and $(t_1^n,t_3^n)$.  If we let $t_2^n$ be represented by $L_2$, then $t_3^n$ is represented by $\ell^{-1} L_2$.
\end{lemma}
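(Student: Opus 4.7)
The plan is to enumerate all neighbors of $t_1^n = \varpi(L_1)$ in $\T_\ell^n$ and then isolate those that project under $\pi$ to $\pi(L_2)$. First I would observe that the representatives of $t_1^n$ are exactly the lattices $\ell^{kn}L_1$ for $k \in \Z$, since any element of $V_n$ is of the form $u\ell^{kn}$ with $u \in \Z_\ell^\times$ and units act trivially on lattices. Thus every edge from $t_1^n$ is realized by an index-$\ell$ sub- or super-lattice of some $\ell^{kn}L_1$; but multiplication by $\ell^{kn}$ preserves $n$-homothety classes, so it suffices to enumerate edges out of $L_1$ itself. These are exactly the vertices $\varpi(M)$ and $\varpi(\ell^{-1}M)$ as $M$ ranges over the index-$\ell$ sublattices of $L_1$.

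Next I would determine which of these neighbors lie in the fiber over $\pi(L_2)$, i.e., are represented by a lattice $\Q_\ell^\times$-homothetic to $L_2$. For an index-$\ell$ sublattice $M = cL_2$ of $L_1$ with $c\in\Q_\ell^\times$, one must have $v_\ell(c) \geq 0$ (else $cL_2 \supsetneq L_2$, and since $\ell^{-1}L_2 \supsetneq L_1$ already, no such $cL_2$ can be contained in $L_1$). Writing $c = u\ell^k$ with $u \in \Z_\ell^\times$ and $k \geq 0$, the chain $cL_2 \subset L_2 \subset L_1$ gives $[L_1:cL_2] = \ell \cdot \ell^{2k}$, and the index-$\ell$ condition forces $k = 0$ and $M = L_2$. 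For a superlattice $\ell^{-1}M = cL_2$ with $M$ an index-$\ell$ sublattice of $L_1$, the same analysis applied to $M = \ell cL_2$ yields $v_\ell(\ell c) = 0$, hence $M = L_2$ and the superlattice is $\ell^{-1}L_2$.

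Therefore the only neighbors of $t_1^n$ projecting to $\pi(L_2)$ are $\varpi(L_2)$ and $\varpi(\ell^{-1}L_2)$, which gives the claim upon setting $t_2^n = \varpi(L_2)$ and $t_3^n = \varpi(\ell^{-1}L_2)$. For $n\ge 2$ these two vertices are distinct in $\T_\ell^n$, since $L_2$ and $\ell^{-1}L_2$ are $n$-homothetic iff $\ell^{-1} \in V_n$ iff $n \mid 1$.

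The main (mild) obstacle is the index-comparison step: one must verify that scaling $L_2$ by a non-unit pushes it strictly outside the range of index-$\ell$ sublattices of $L_1$. The cleanest way is to invoke the standard fact that $\ell L_1 \subset L_2 \subset L_1$ (since $[L_2:\ell L_1] = \ell^2/\ell = \ell$), so that the index-$\ell$ sublattices of $L_1$ correspond to the $\ell+1$ lines in $L_1/\ell L_1 \cong \F_\ell^2$; within each $\Q_\ell^\times$-homothety class exactly one such line survives, which is the content of the calculation above.
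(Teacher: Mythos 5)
Your proposal is correct and takes essentially the same approach as the paper: both arguments reduce to an index count showing that if $cL_2$ (resp.\ $\ell^{-1}cL_2$) is an index-$\ell$ sublattice (resp.\ superlattice) of $L_1$, then $v_\ell(c)=0$, so the neighbor must be $\varpi(L_2)$ or $\varpi(\ell^{-1}L_2)$. Your version is organized slightly differently --- enumerating all neighbors of $t_1^n$ up front via the pairs $\varpi(M),\varpi(\ell^{-1}M)$ and then filtering --- and you are more explicit about the reduction to edges emanating from $L_1$ itself (via $V_n$-scaling) and about the fact that distinctness of $t_2^n,t_3^n$ requires $n\ge 2$; the paper's proof simply asserts distinctness, which in fact fails when $n=1$ (there $\T_\ell^1=\T_\ell$ and the two vertices coincide), so your caveat is a genuine, if minor, improvement.
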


\begin{proof}
Clearly, if we take $t_2^n=\varpi(L_2)$ and $t_3^n=\varpi(\ell^{-1}L_2)$, we see that $t_2^n$ and $t_3^n$ are distinct and have the desired properties.  It remains to show that there is no third vertex $t_4^n$, distinct from $t_2^n$ and $t_3^n$, with $\pi(t_4^n)=\pi(L_2)$, and such that there is an edge between $t_4^n$ and $t_1^n$.  

Suppose that there is an edge between $t_1^n$ and $t_4^n$.  Then either there is a lattice $L_3$ representing $t_4^n$ such that $L_1\supset L_3$ and $[L_1:L_3]=\ell$ or there is a lattice $L_3$ representing $t_4^n$ such that $L_1\subset L_3$ and $[L_3:L_1]=\ell$.  

Now suppose $L_3$ is homothetic to $L_2$, say with $L_3=\alpha L_2$, where $\alpha\in\Q_\ell^\times$.  

If $L_1\supset L_3$ has index $\ell$, then we have that $\ell^{-1}L_2\supset L_1$ has index $\ell$ and $L_1\supset\alpha L_2$ has index $\ell$.  Hence, multiplying by $\ell$, we see that $L_2\subset \ell\alpha L_2$ with index $\ell^2$.  This implies that $v_\ell(\alpha)=0$, so that $\alpha L_2=L_2$, so $t_4^n=t_2^n$.

On the other hand, if $L_1\subset L_3$ with index $\ell$, we have that $L_2\subset\alpha L_2$ has index $\ell^2$.  Hence $v_\ell(\alpha)=-1$, and we see that $\alpha L_2=\ell^{-1}L_2$, so $t_4^n=t_3^n$. 
\end{proof}

\begin{corollary} Let $n\geq 1$, let $t^n$ be a vertex in $\T^n_\ell$, and let $t=\pi(t^n)\in\T_\ell$.  Let $s\in\T_\ell$ be a neighbor of $t$.  Then there are exactly two neighbors $s_1^n$ and $s_2^n$ of $t^n$ in $\T^n_\ell$ with $\pi(s_1^n)=\pi(s_2^n)=s$.  If $L$ represents $t^n$, then exactly one of $s_1^n$ and $s_2^n$ is represented by a sublattice $L'$ of $L$ of index $\ell$; the other is represented by $\ell^{-1}L'$, which contains $L$ with index $\ell$.
\end{corollary}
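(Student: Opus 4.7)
The plan is to reduce immediately to the preceding lemma. The only work is to produce a representative of $s$ that sits inside $L$ as a sublattice of index $\ell$; once we have it, the lemma does the rest.

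First I would produce a lattice $L'$ representing $s$ with $L\supset L'$ of index $\ell$. By the definition of adjacency in the Bruhat-Tits tree $\T_\ell$, some representative $L''$ of $s$ satisfies either $L\supset L''$ or $L''\supset L$ with index $\ell$. In the first case set $L'=L''$; in the second case set $L'=\ell L''$, which is $\Q_\ell$-homothetic to $L''$ (hence still represents $s\in\T_\ell$) and satisfies $L\supset L'$ of index $\ell$, since multiplying $L''\supset L$ by $\ell$ gives $\ell L''\subset \ell L\subset L''$ with $[L:\ell L'']=\ell$.

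Now I would apply the preceding lemma with $L_1=L$ and $L_2=L'$. It gives exactly two vertices $s_1^n,s_2^n\in\T_\ell^n$ lying over $\pi(L')=s$ that are joined to $t^n=\varpi(L)$ by edges, namely $s_1^n=\varpi(L')$ and $s_2^n=\varpi(\ell^{-1}L')$. The relation $L\supset L'$ of index $\ell$ becomes $\ell^{-1}L'\supset L$ of index $\ell$ upon multiplying by $\ell^{-1}$, which yields precisely the containment claims in the corollary.

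There is no real obstacle here: the corollary is just a translation of the lemma from the language of sublattice inclusions to the language of neighbors of a fixed vertex $t^n$ over a prescribed neighbor $s$ of $\pi(t^n)$. The only thing to be careful about is to justify that any neighbor $s$ of $t$ in $\T_\ell$ admits a representative contained in $L$ with index $\ell$, which is what the first paragraph achieves by the scaling argument.
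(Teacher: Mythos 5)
Your approach is correct and is exactly the intended one: the corollary is an immediate translation of the preceding lemma, and the only work is to replace a representative $L''$ of $s$ by one contained in $L$ with index $\ell$. The paper offers no separate argument, so your proof simply fills in the obvious step.

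One small slip in the second paragraph: in the case $L''\supset L$ of index $\ell$, the chain you write, $\ell L''\subset \ell L\subset L''$, is wrong --- applying $\ell$ to the containment $L''\supset L$ gives $\ell L''\supset \ell L$, not the reverse. What you actually want (and what justifies the conclusion $[L:\ell L'']=\ell$) is the chain $\ell L''\subset L\subset L''$: every element of $L''$ has order dividing $\ell$ in $L''/L$, so $\ell L''\subseteq L$, and then $[L:\ell L''] = [L'':\ell L'']/[L'':L] = \ell^2/\ell = \ell$. With that correction the argument is complete.
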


\begin{definition} Let $n\geq 1$, let $t^n\in\T_\ell^n$ be a vertex represented by a lattice $L$ in $K_\ell$, and let $s_1^n, s_2^n\in\T_\ell^n$ be two neighbors of $t_n$ with $\pi(s_1^n)=\pi(s_2^n)$.  Call the neighbor represented by a sublattice of index $\ell$ in $L$ a {\it downhill} neighbor of $t^n$; call the other an {\em uphill} neighbor of $t$.
\end{definition}

\begin{definition} Let $t^n\in\T_\ell^n$.  We define the {\em tier} of $t^n$ to be the distance between $\pi(t^n)$ and $\pi(\Ol)$ in $\T_\ell$.  A neighbor of $t^n$ of higher tier than $t^n$ will be called an {\em outer neighbor} of $t^n$: a neighbor of lower tier will be called an {\em inner neighbor}.
\end{definition}

\begin{remark} Each $t^n\in\T_\ell^n$ has precisely $\ell+1$ downhill neighbors and $\ell+1$ uphill neighbors.  The use of uphill and downhill matches our intuition; if $s^n$ is a downhill neighbor of $t^n$, then $t^n$ is an uphill neighbor of $s^n$.

Each vertex of positive tier has precisely $\ell$ downhill outer neighbors, and $1$ downhill inner neighbor.  It also has precisely $\ell$ uphill outer neighbors, and $1$ uphill inner neighbor.

A vertex of tier 0 has only outer neighbors; $\ell+1$ of them are uphill, and $\ell+1$ are downhill.
\end{remark}

There is a natural action of the group $\GL(2,\Q_\ell)$ on $\Q_\ell^2$, namely matrix multiplication with elements of $\Q_\ell^2$ considered as column vectors.  We transfer this action to $K_\ell$ via the identification that we have made between $K_\ell$ and $\Q_\ell^2$.  The action of  $g\in\gl(2,\Q_\ell)$ is invertible, and preserves $\Q_\ell$-linear  combinations, so it maps bases of $\Q_l^2$ to bases, maps lattices to lattices, and preserves $n$-homothety of lattices.  Hence, multiplication by $g$ defines a bijection from $\T_\ell^n$ to $\T_\ell^n$.

\begin{definition} 
Let $F(\T_\ell^n)$ be the set of $\F$-valued functions on the vertices of $\T_\ell^n$.
\end{definition}

\begin{definition} The {\em Laplace operator} $\Delta_\ell^n$ on $F(\T_\ell^n)$ is defined by
\[\Delta_\ell^n(f)(t^n)=\sum_{u^n}f(u^n),\]
where the sum runs over the $\ell+1$ downhill neighbors $u^n\in\T_\ell^n$ of $t^n\in \T_\ell^n$.
\end{definition}
  
In the next lemma, we describe how the coset representatives for a Hecke operator act on lattices.  Recall Lemmas~\ref{L:partition}, \ref{L:stab-reps} and \ref{ad1} for the definition of the sets $B_j$, the coset representatives $s_{\beta,j}$, and the integers $d_j$.  Note that these definitions depend on a choice of an element $x_0\in\XSQ$ and a choice of $Z\,\gl(2,\Z)$-orbit representatives $\A$ containing $x_0$.

\begin{lemma}\label{ell+1}
Let $x_0\in\XSQ$ and let $\A$ be a set of $Z\,\gl(2,\Z)$-orbit representatives containing $x_0$. Let $x_0$ be represented by $y=(a_0,b_0)\in Y$ with $a_0,b_0\in\OK$, and let $L_0=L(a_0,b_0)$ be the lattice generated by $a_0$ and $b_0$. Let $s=\diag(\ell,1)$ and let 
\[\gl(2,\Z)s\gl(2,\Z)=\coprod_{\alpha}\gl(2,\Z)s_\alpha\]
with the $s_\alpha$ chosen and partitioned as described in Section~\ref{coefficients}.
\begin{enumerate}[(i)]
\item $\mathcal L=\{s_\alpha L_0\}$ consists of the $\ell+1$ lattices of index $\ell$ contained in $L_0$.
\item $\mathcal L$ is partitioned into the subsets
$$\mathcal L_j=\{s_{\alpha}L_0|s_\alpha\in B_j\},$$
and $|\mathcal L_j|=d_j$.
\item The same is true of the completions at $\ell$: $\mathcal L_\ell=\{s_\alpha( L_0)_\ell\}$ consists of the $\ell+1$ lattices of index $\ell$ contained in $(L_0)_\ell$, and these are partitioned into the subsets 
$$\mathcal L_{\ell,j}=\{s_{\alpha}(L_0)_\ell|\alpha\in B_j\}.$$
and $|\mathcal L_{\ell,j}|=d_j$
\end{enumerate}
\end{lemma}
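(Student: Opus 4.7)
The plan is to prove the three parts in sequence, with the main content concentrated in part (i), which establishes a bijection between $E$ and the set of index-$\ell$ sublattices of $L_0$.

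For (i), observe that any $s_\alpha \in \gl(2,\Z)\cdot\diag(\ell,1)\cdot\gl(2,\Z)$ lies in $M_2(\Z)$ and has $|\det s_\alpha|=\ell$, so writing $s_\alpha y = \transpose{(a'_\alpha,b'_\alpha)}$, the entries $a'_\alpha, b'_\alpha$ are $\Z$-linear combinations of $a_0, b_0$ and hence lie in $L_0$; thus $s_\alpha L_0 := \Lambda_{s_\alpha y} = \Z a'_\alpha + \Z b'_\alpha$ is a sublattice of $L_0$ of index $\ell$. (The adjustment of $s_\alpha$ by left-multiplication by $\gl(2,\Z)$ performed in Section~\ref{coefficients} does not affect this lattice, since left-multiplication merely changes the $\Z$-basis of $\Lambda_{s_\alpha y}$.) For injectivity of $s_\alpha \mapsto s_\alpha L_0$, suppose $\Lambda_{s_\alpha y} = \Lambda_{s_\beta y}$; then $s_\alpha y = \gamma s_\beta y$ for some $\gamma \in \gl(2,\Z)$, and the matrix $s_\alpha - \gamma s_\beta$ annihilates the column $y$. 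The $\Q$-linear independence of $a_0, b_0$ in $K$ then forces each row of $s_\alpha - \gamma s_\beta$ to vanish, so $s_\alpha = \gamma s_\beta$ and the two left cosets coincide. Combined with the standard counts that $\gl(2,\Z)\cdot\diag(\ell,1)\cdot\gl(2,\Z)$ decomposes into $\ell+1$ left cosets and that $L_0$ has exactly $\ell+1$ index-$\ell$ sublattices (parameterized by lines in $L_0/\ell L_0 \cong \F_\ell^2$), the injection is a bijection.

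Part (ii) is immediate from (i) together with the partition $E = \coprod_j B_j$ and the count $|B_j| = d_j$ from Lemmas~\ref{L:partition} and \ref{L:stab-reps}. For part (iii), completion at $\ell$ is exact on finitely generated $\Z$-modules (as recalled from \cite{Eisenbud} in the excerpt), so $(s_\alpha L_0)_\ell$ is an index-$\ell$ sublattice of $(L_0)_\ell$; moreover the map $L' \mapsto L'_\ell$ is a bijection between index-$\ell$ sublattices of $L_0$ and those of $(L_0)_\ell$, with inverse $L'' \mapsto L_0 \cap L''$ (using the local-global principle $L = \bigcap_w K \cap L_w$ from \cite[V.3, Theorem 2]{Weil} recalled above). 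Thus $\mathcal L_\ell$ exhausts all $\ell+1$ index-$\ell$ sublattices of $(L_0)_\ell$, and the partition $\mathcal L_{\ell,j} = \{(s_\alpha L_0)_\ell : s_\alpha \in B_j\}$ inherits cardinalities $d_j$.

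The main subtlety is purely notational: ``$s_\alpha L_0$'' must be read as $\Lambda_{s_\alpha y}$ (the lattice generated by the entries of the column $s_\alpha y$), rather than as the image of $L_0$ under the $\gl(2,\Q_\ell)$-action on $K_\ell$ coming from the identification of Definition~\ref{identify}; these two potential interpretations in general produce different sublattices for the same coset representative, and only the former yields the injectivity, and hence the bijection, asserted in (i).
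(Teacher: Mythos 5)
Your proof is correct and follows essentially the same route as the paper's: in (i) the key point is that each $s_\alpha$ is an integral matrix of determinant $\ell$, (ii) is immediate from the partition $E=\coprod_j B_j$, and (iii) uses exactness of $\ell$-adic completion together with the local--global determination of a lattice by its completions. Your closing remark that $s_\alpha L_0$ must be read as $\Lambda_{s_\alpha y}$ rather than via the $\gl(2,\Q_\ell)$-action of Definition~\ref{identify} is a correct and worthwhile clarification; the paper's own proof makes this reading explicit by setting $s_\alpha L_0 = L(a_\alpha,b_\alpha)$ where $\transpose{(a_\alpha,b_\alpha)} = s_\alpha\transpose{(a_0,b_0)}$, and the injectivity you spell out in (i) does depend on it.
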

\begin{proof}
\begin{enumerate}[(i)]
\item If 
$$s_\alpha\begin{pmatrix}a_0\cr b_0\end{pmatrix}=\begin{pmatrix}a_\alpha\cr b_\alpha\end{pmatrix},$$
then $s_\alpha L_0=L(a_\alpha,b_\alpha)$.  Since $s_\alpha$ is an integral matrix of determinant $\ell$, it is clear that $L(a_\alpha,b_\alpha)$ has index $\ell$ in $L_0$, and all sublattices of $L_0$ of index $\ell$ arise this way.
\item Since $\{s_\alpha\}$ is partitioned by the sets $B_j$, it is clear that the lattices are partitioned as indicated.
\item If $L$ has index $\ell$ in $L_0$, then the completion $L_\ell$ has index $\ell$ in $(L_0)_\ell$, since taking completions of finitely generated modules is an exact functor. Given two lattices $L\neq M$, each having index $\ell$ in $L_0$, we note that for all places $w\neq\ell$, $L_w=M_w=(L_0)_w$.  Since a lattice is determined by its completions at all finite places, we must have $L_\ell\neq M_\ell$.
\end{enumerate}
\end{proof}


\begin{definition}
Let $\phi_\ell^n\in F(\T_\ell^n)$, let $x_0\in\XSQ$, and let $\A$ be any set of $Z\,\gl(2,\Z)$-orbit representatives of $\XSQ$ containing $x_0$.  Define the sets $B_j$ in terms of $x_0$ and $\A$ as in Lemma~\ref{ell+1}. If, for all choices of $\A$ and for all $y=\transpose{(a_0,b_0)}\in Y$ with $a_0,b_0\in\OK$ representing $x_0$, we have that $\phi_\ell^n$ is constant on the set
 \[\{(s_{\beta,j}L(a_0,b_0))_\ell|\beta=1,\ldots,d_j\}\]
of vertices of $\T_\ell^n$, then we will say that $\phi_\ell^n$ is {\em locally constant} relative to $T_\ell$ and $x_0$.

If $\phi_\ell^n$ is locally constant relative to $T_\ell$ and all $x_0\in\XSQ$, then we say that $\phi_\ell^n$ is {\em locally constant}.
\end{definition}

We remark that the condition $a_0,b_0\in\OK$ could be relaxed to $a_0,b_0\in K$ without effect.  This is true because for any pair $a_0,b_0\in K$, there is an integer $m$ such that $m^na_0,m^nb_0\in\OK$; then $L(a_0,b_0)$ and $L(m^na_0,m^nb_0)$ are $n$-homothetic and hence define the same vertex of $\T_\ell^n$.

\begin{definition} Let $t_0^n\in\T_\ell^n$ be the vertex represented by the lattice $\Ol$.
\end{definition}

\begin{lemma}\label{L:gl2zl-invariant}
The action of $\gl(2,\Z_\ell)$ on $\T_\ell^n$ permutes the vertices of $\T_\ell^n$, fixes vertices of tier 0, and preserves edges (including whether the edge is uphill or downhill) and the tier of each vertex.
\end{lemma}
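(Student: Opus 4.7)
The plan is to reduce the lemma to three basic observations: that $\GL(2,\Z_\ell)$ stabilizes the standard lattice $\Ol \subset K_\ell$, that its action preserves $\Z_\ell$-lattices together with inclusion and index, and that its action commutes with scalar multiplication by $V_n$.

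The first step is to check that under the identification of $K_\ell$ with $\Q_\ell^2$ made in Definition~\ref{identify}, $\Ol$ corresponds to the standard lattice $\Z_\ell^2$. In the inert case this is immediate, since $1$ and $\omega$ are sent to $e_1$ and $e_2$ and $\Ol=\Z_\ell[\omega]$. In the split case, $1$ maps to $(1,1)$ and $\omega$ to $(i_\lambda(\omega),i_{\lambda'}(\omega))$; the change-of-basis matrix from this basis to the standard basis of $\Q_\ell^2$ has determinant $i_{\lambda'}(\omega)-i_\lambda(\omega)$, whose square is $d=\disc(K)$, a unit in $\Z_\ell$ because $\ell$ is unramified. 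Hence the change of basis lies in $\GL(2,\Z_\ell)$ and $\Ol$ is again the standard lattice.

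Given any $g\in\GL(2,\Z_\ell)$, the map $L\mapsto gL$ sends $\Z_\ell$-lattices to $\Z_\ell$-lattices because both $g$ and $g^{-1}$ are $\Z_\ell$-linear, and it commutes with scalar multiplication by any $\alpha\in\Q_\ell^\times$ (the scalar matrix $\alpha I$ is central). Thus $g$ descends to a well-defined action on $n$-homothety classes, i.e.\ on the vertices of $\T_\ell^n$. If $L_1\supset L_2$ with $[L_1:L_2]=\ell$, then $gL_1\supset gL_2$ with the same index, so edges are preserved; moreover, if $s^n$ is a downhill neighbor of $t^n$ because some representative $L'$ of $s^n$ is contained with index $\ell$ in some representative $L$ of $t^n$, then $gL'\subset gL$ still has index $\ell$, so $gs^n$ is a downhill neighbor of $gt^n$, and symmetrically for uphill neighbors.

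Finally, $g\Ol=\Ol$ gives $g\cdot(\ell^k\Ol)=\ell^k\Ol$ for each $k$, so $g$ fixes every vertex in the fiber of $\pi(\Ol)$, which is exactly the set of tier $0$ vertices of $\T_\ell^n$. The induced map on $\T_\ell$ is a simplicial automorphism fixing the base vertex $\pi(\Ol)$, hence preserves the distance from $\pi(\Ol)$, which by definition is the tier. No step here is a serious obstacle; the only subtlety is the verification in the split case that $\Ol$ really is the standard $\Z_\ell$-lattice under our identification, which is why the unramifiedness hypothesis on $\ell$ enters.
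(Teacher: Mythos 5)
Your proof is correct and follows the same structure as the paper's: preservation of lattices, inclusions, and indices gives a simplicial automorphism of $\T_\ell^n$; fixing $\Ol$ gives fixing of tier-$0$ vertices; and a distance-preservation argument handles the tiers. The one place where you go beyond the paper is also the most valuable: you actually verify that under the identification of Definition~\ref{identify}, $\Ol$ corresponds to the standard lattice $\Z_\ell^2$. The paper simply asserts this ("$\Z_\ell^2$, which is identified with $\Ol$"), which is immediate in the inert case but genuinely requires a check in the split case, since there the identification sends $1\mapsto(1,1)$ and $\omega\mapsto(i_\lambda(\omega),i_{\lambda'}(\omega))$ rather than mapping an integral basis to $e_1,e_2$. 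Your observation that the change-of-basis determinant is $i_{\lambda'}(\omega)-i_\lambda(\omega)$, whose square is $\disc(K)$, a unit in $\Z_\ell$ precisely because $\ell$ is unramified, is exactly the right thing to say and makes explicit why the unramifiedness hypothesis is being used here.
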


\begin{proof}
Since the action of $\GL(2,\Z_\ell)$ is invertible, it is clear that the map it induces on vertices is a bijection.  In addition, if $\gamma\in\gl(2,\Z_\ell)$, and $L_1\subset L_2$ with index $\ell$, then $\gamma L_1\subset \gamma L_2$ with index $\ell$, so edges are preserved (including whether the edge is uphill or downhill).

Since the action of $\gl(2,\Z_\ell)$ fixes $\Z_\ell^2$, which is identified with $\Ol$, it fixes vertices of tier 0.  Since it preserves neighbors, a simple inductive argument shows that it maps each vertex to a vertex of the same tier.
\end{proof}

\begin{lemma}\label{mult-epsilon}
Multiplication by the fundamental unit $\epsilon\in K\subset K_\ell$ induces a permutation on the vertices of $\T_\ell^n$  given (on the level of $\Z_\ell$-lattices) by multiplication by a matrix in $\GL(2,\Z_\ell)$. 
\end{lemma}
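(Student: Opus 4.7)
The plan is to exhibit the matrix explicitly and observe that it already lies in $\GL(2,\Z)$, hence a fortiori in $\GL(2,\Z_\ell)$, so that Lemma~\ref{L:gl2zl-invariant} applies.

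First I would note that multiplication by $\epsilon$ is a $\Q$-linear endomorphism of $K$, and tensoring with $\Q_\ell$ it becomes a $\Q_\ell$-linear endomorphism of $K_\ell$. Under the identification of $K_\ell$ with $\Q_\ell^2$ via the basis $\{1,\omega\}$ fixed in Definition~\ref{identify}, this endomorphism is given by some matrix $M_\epsilon\in\GL(2,\Q_\ell)$. Writing $\epsilon=a+b\omega$ with $a,b\in\Z$ and $\omega^2=c_0+c_1\omega$ with $c_0,c_1\in\Z$ (which holds because $\omega$ is an algebraic integer and $\OK=\Z[\omega]$), direct computation of $\epsilon\cdot 1$ and $\epsilon\cdot\omega$ in terms of $\{1,\omega\}$ shows that the entries of $M_\epsilon$ lie in $\Z$. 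Moreover $\det(M_\epsilon)=N^K_\Q(\epsilon)=\pm1$, since $\epsilon\in\OK^\times$, so $M_\epsilon\in\GL(2,\Z)\subset\GL(2,\Z_\ell)$.

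Next I would verify that the resulting map on $\T_\ell^n$ is well defined as a permutation. Multiplication by any element of $\GL(2,\Q_\ell)$ sends $\Z_\ell$-lattices in $K_\ell$ bijectively to $\Z_\ell$-lattices and preserves $n$-homothety classes (as noted in the paragraph preceding Definition~\ref{D:idealistic} of $F(\T_\ell^n)$), so $M_\epsilon$ induces a bijection on the vertex set of $\T_\ell^n$. Because $M_\epsilon\in\GL(2,\Z_\ell)$, Lemma~\ref{L:gl2zl-invariant} guarantees that this bijection is a legitimate graph automorphism that respects the tier structure as well.

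There is no real obstacle here; the only thing worth being careful about is the bookkeeping that the matrix of multiplication by $\epsilon$ with respect to the integral basis $\{1,\omega\}$ genuinely has integer entries and unit determinant, which is immediate from $\epsilon\in\OK^\times$ and $\OK=\Z[\omega]$.
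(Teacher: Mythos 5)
Your argument only covers the inert case. Definition~\ref{identify} specifies two different identifications of $K_\ell$ with $\Q_\ell^2$: for $\ell$ inert, the basis $\{1,\omega\}$ is used, but for $\ell$ split the identification is $t\mapsto(i_\lambda(t),i_{\lambda'}(t))$, via the two Galois-conjugate embeddings into $\Q_\ell$. You write as though the $\{1,\omega\}$ identification applies uniformly, and under that assumption you correctly compute that the matrix of multiplication by $\epsilon$ lies in $\GL(2,\Z)$. But for split $\ell$ the paper does \emph{not} use that identification; under the embedding identification, multiplication by $\epsilon$ is given by $\diag(\epsilon,\epsilon')$, whose entries are $\ell$-adic units but are certainly not rational integers. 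The paper therefore treats the split and inert cases separately, and your proof is missing the split branch.

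The statement is of course still true, and your computation can be rescued: the change of basis in $K_\ell$ from $\{1,\omega\}$ to the embedding coordinates is the matrix $\begin{pmatrix}1&\omega_\lambda\\1&\omega_{\lambda'}\end{pmatrix}$, whose determinant $\omega_{\lambda'}-\omega_\lambda$ squares to $d=\disc(K)$; since $\ell\nmid d$, this is an $\ell$-adic unit, so the change of basis lies in $\GL(2,\Z_\ell)$ and conjugates your $M_\epsilon\in\GL(2,\Z)$ to $\diag(\epsilon,\epsilon')\in\GL(2,\Z_\ell)$. But that verification is absent from your write-up; as it stands, your proof silently assumes the wrong identification in the split case. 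A minor further remark: the final invocation of Lemma~\ref{L:gl2zl-invariant} is not needed to establish that multiplication by a matrix in $\GL(2,\Q_\ell)$ permutes the vertices of $\T_\ell^n$ --- that much already follows from invertibility and preservation of $n$-homothety, and is all the lemma asserts.
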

\begin{proof}

Suppose that $\ell$ is inert in $K$.  In this case (see Definition~\ref{identify}), we have identified $K_\ell=\Q_\ell\oplus\Q_\ell\omega$ with $\Q_\ell^2$.  Since multiplication by $\epsilon$ is $\Q$-linear on $K$ it induces a $\Q_\ell$-linear map on $K_\ell$. Hence, multiplication by $\epsilon$ is represented by a matrix in $\GL(2,\Q_\ell)$.  Since multiplication by $\epsilon$ is an automorphism of $\Ol$, and $\Ol$ is identified with $\Z_\ell^2\subset\Q_\ell^2$, this matrix has entries in $\Z_\ell$, and since $\epsilon$ has norm $\pm1$, the matrix must have determinant $\pm1$, and we see that the matrix is in $\GL(2,\Z_\ell)$.

Now suppose that $\ell$ is split.  Referring to Definition~\ref{identify} again,  we have identified $K_\ell$ with $\Q_\ell^2$, where $c\in K$ is identified with $(c,c')\in\Q_\ell^2$.  Hence, multiplication by $\epsilon$ is represented by the matrix $\diag(\epsilon,\epsilon')$, which is in $\GL(2,\Z_\ell)$.  
\end{proof}


\begin{lemma}\label{loc-const} Let $\phi\in F(\T^n_\ell)$ be a function on the vertices of $\T^n_\ell$.  Assume that for every vertex $t^n\in\T^n_\ell$, $\phi$ is constant on the set of non-idealistic outer downhill neighbors $u^n$ of $t^n$.  Then $\phi$ is locally constant relative to $T_\ell$ and any $x_0\in\XSQ$.
\end{lemma}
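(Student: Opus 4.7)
The plan is to analyze how the $\ell+1$ downhill neighbors of $\varpi((L_0)_\ell)$ in $\T_\ell^n$ are partitioned by the $B_j$'s associated to the Hecke operator $T_\ell$, and then to show that within each $B_j$ the vertices are either all non-idealistic outer (where the hypothesis applies) or the subset collapses to a single vertex. Fix $x_0\in\XSQ$, a set $\A$ of $Z\,\gl(2,\Z)$-orbit representatives containing $x_0$, and $y_0=\transpose{(a_0,b_0)}\in Y$ representing $x_0$ with $L_0=L(a_0,b_0)$. By Lemma~\ref{ell+1}, the completions $\{(s_{\beta,j}L_0)_\ell\}$ run bijectively through the $\ell+1$ sublattices of $(L_0)_\ell$ of index $\ell$, hence through the downhill neighbors of $\varpi((L_0)_\ell)$ in $\T_\ell^n$.

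The first key step is to identify the internal structure of a single $B_j$. Using the decomposition $s_{\beta,j}=\eta_{\beta,j}t_j\delta_{\beta,j}$ with $\eta_{\beta,j}\in\stab j$ and $\delta_{\beta,j}\in\stab0$, and writing $\delta_{\beta,j}y_0=y_0\alpha_{\beta,j}$ for some $\alpha_{\beta,j}\in\KSQ$ (because $\delta_{\beta,j}\in\stab0$), the fact that right scalar multiplication commutes with left matrix multiplication together with $\eta_{\beta,j}\in\gl(2,\Z)$ leaving the spanned lattice unchanged yields $s_{\beta,j}L_0=\alpha_{\beta,j}(t_jL_0)$. The identity $\delta_{\beta,j}L_0=L_0$ combined with $\delta_{\beta,j}y_0=y_0\alpha_{\beta,j}$ forces $\alpha_{\beta,j}L_0=L_0$, and a standard characteristic-polynomial argument then shows $\alpha_{\beta,j}\in\OK^\times$. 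So within a fixed $B_j$, the lattices $s_{\beta,j}L_0$ are $\OK^\times$-multiples of one another.

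Next, the proof of Lemma~\ref{mult-epsilon} shows that multiplication by any $\alpha\in\OK^\times$ acts on $K_\ell$ as an element of $\gl(2,\Z_\ell)$ (in the standard basis of $\Ol$), so by Lemma~\ref{L:gl2zl-invariant} this action preserves tiers, edges, and the uphill/downhill distinction. Since the action also fixes $\varpi((L_0)_\ell)$, it permutes the downhill neighbors preserving outer/inner, and it preserves idealistic-ness because $\alpha\cdot I_\ell=(\alpha I)_\ell$ remains the completion of a fractional ideal. Therefore the vertices in a single $B_j$-subset are either all outer or all inner, and either all idealistic or all non-idealistic.

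The case analysis now finishes the proof. If the subset is inner, then since any vertex has at most one inner downhill neighbor, $|B_j|=1$. If the subset is outer and non-idealistic, the hypothesis applied at $t^n=\varpi((L_0)_\ell)$ yields constancy. The most delicate step, and the one I expect to be the main obstacle, is the idealistic outer case: if $(t_jL_0)_\ell$ is $V_n$-homothetic to $I_\ell$ for some fractional ideal $I$, then each $(s_{\beta,j}L_0)_\ell=\alpha_{\beta,j}(t_jL_0)_\ell$ is $V_n$-homothetic to $\alpha_{\beta,j}I_\ell=I_\ell$, using the crucial identity $\alpha I=I$ as subsets of $K$ for $\alpha\in\OK^\times$. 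This collapses all vertices of the $B_j$-subset to a single vertex in $\T_\ell^n$; combined with Lemma~\ref{ell+1}(iii) (distinct $s_{\beta,j}$'s yield distinct downhill neighbors), this forces $|B_j|=1$ in the idealistic outer case too. The principal technical challenge throughout is translating the global invariance $\alpha_{\beta,j}L_0=L_0$ in $K$ into tree-theoretic statements in $\T_\ell^n$, done by passing through the $\gl(2,\Z_\ell)$-action at $\ell$.
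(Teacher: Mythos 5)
Your proof is correct and takes a genuinely different route from the paper's. The paper argues via a trichotomy on the type of a single representative vertex and invokes the formula $d_j = \lcm(m_0,m_j)/m_0$ of Lemma~\ref{H5} together with Lemma~\ref{is} to force $d_j=1$ in the idealistic and inner cases. You instead extract the structural identity $s_{\beta,j}L_0 = \alpha_{\beta,j}(t_jL_0)$ with $\alpha_{\beta,j}\in\OK^\times$ and $\alpha_{\beta,j}L_0=L_0$, and then use the $\gl(2,\Z_\ell)$-action of $\alpha_{\beta,j}$ (Lemmas~\ref{mult-epsilon} and~\ref{L:gl2zl-invariant}) to show all vertices in a single $B_j$-block share a type and, in the inner and idealistic cases, actually coincide; combined with the injectivity in Lemma~\ref{ell+1}(iii), this gives $|B_j|=1$ directly without touching $\lcm$-arithmetic. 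Notably, your handling of the idealistic case is more robust than the paper's: the paper's Case 1 asserts that ``$L_1$ is a fractional ideal of $K$'' from the hypothesis that the vertex $u^n=\varpi((L_1)_\ell)$ is idealistic, but that hypothesis is a purely local condition at $\ell$, while being a fractional ideal is global, and $L_0$ (and hence $L_1$) may well have a non-maximal order at places $w\neq\ell$; the conclusion $d_j=1$ is still true, but the paper's stated justification does not establish it, whereas your argument --- that a unit $\alpha_{\beta,j}\in\OK^\times$ fixes the completion of any fractional ideal, so $\alpha_{\beta,j}(t_jL_0)_\ell = (t_jL_0)_\ell$ when the latter is idealistic --- works purely at $\ell$ and closes the gap cleanly.
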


\begin{proof}
Assume that $\phi$ satisfies the conditions of the lemma.  Let $x_0\in\XSQ$, choose any collection $\A$ of orbit representatives containing $x_0$, and choose any $y=\transpose{(a_0,b_0)}$ with $a_0,b_0\in\OK$ representing $x_0$.  Partition the set $\{s_\alpha\}$ of coset representatives for the Hecke operator $T_\ell$ as in Lemma~\ref{ell+1}.

Let $t^n$ be the vertex of $\T_\ell^n$ represented by $L_0=L(a_0,b_0)$.  For each set $B_j$, we wish to show that $\phi$ is constant on the set $\{(s_{\beta,j}L_0)_\ell|s_{\beta,j}\in B_j\}$.  Choose any $s_{\beta,j}$ and let $u^n$ be the downhill neighbor of $t^n$ represented by $(L_1)_\ell$, where $L_1=s_{\beta,j}L_0$.  Then $L_1$ is homothetic to a lattice with a basis representing $x_j$.  We now divide the proof into 3 cases.
\begin{enumerate}[{Case }1.,leftmargin=*]
\item Suppose $u^n$ is idealistic.  Then $L_1$ is a fractional ideal of $K$, and is homothetic to a fractional ideal with basis representing $x_j\in\A$.  Hence, $m_j=i^S$, so $d_j=1$ by Lemmas~\ref{is} and~\ref{H5}.  Hence, there is only one vertex on which $\phi$ must be constant.
\item Suppose that $u^n$ is the unique downhill inner neighbor of $t^n$.  Recall from Theorem~\ref{stabgen} that $\stab{0}=\Gamma{x_0}$ fixes $x_0$ and is generated by an element $g_0$ that acts on $L_0$ as multiplication by $\delta_0=\pm\epsilon^{m_x}$ with the sign chosen so that $\delta_0\in \KSQ$.  From Theorem~\ref{stabgen}, we see that
\[g_0L_0=\delta_0L_0=L_0.\]
Since multiplication by $\delta_0$ fixes $L_0$, it also fixes $(L_0)_\ell=t^n$.  Multiplication by $\delta_0$ also fixes each element of the fiber of $t_0^n=\varpi(\Ol)$, so it must fix the unique downhill path from $t^n$ to the fiber of $t_0^n$.  Hence, multiplication by $\delta_0$ must fix $u^n$.  

Now, both $L_1$ and $\delta_0L_1$ are sublattices of $L_0$ of index $\ell$.  Since both must represent $u^n$, we see that they are equal.  Since $\delta_0L_1=L_1$, we see that $m_j|m_0$, so that $d_j=1$.  Hence, again, there is only one vertex on which $\phi$ must be constant.
\item Suppose $u^n$ is a nonidealistic outer downhill neighbor of $t^n$.  By cases 1 and 2, no vertex in $\{(s_{\beta,j}L_0)_\ell|\beta\in B_j\}$ can be idealistic or a downhill inner neighbor of $t_n$.  Hence, $\phi$ is constant (by hypothesis) on all the vertices in the desired set.
\end{enumerate}
\end{proof}

\section{Construction of functions on lattices; comparison between the Laplacian and a Hecke operator}

\begin{definition}\label{D:q-homogeneous} Let $q:Z\to\F^\times$ be the character defined in Definition~\ref{D:q*}, and let $\ell$ be a prime of $\Z$ that is unramified in $K$.   We say that a function $f\in F(\T_\ell^n)$ is $q$-homogeneous (or just homogeneous, if $q$ is understood) if, for all lattices $L$ in $K$,
\[f(\ell L_\ell)=q(\ell I)f(L_\ell).\]
\end{definition}

\begin{definition}\label{Def:glob} For all finite places $w$ of $\Q$ unramified in $K$, let $n_w=1$ if $w$ is inert in $K$, let $n_w=2$ if $w$ splits in $K$.  Fix a prime $\ell$ of $\Q$ not dividing $pdN$, and let $W$ be the set of all finite places of $\Q$ not dividing $\ell pdN$.
For $w\in W$, let $\phi_w\in F(\T_w^{n_w})$ denote a homogeneous function such that $\phi_w(\Ow)=1$.  We view the functions $\phi_w$ as fixed by the context, and do not include them in the following notation for $\Phi$.  For any homogeneous $\phi_\ell\in F(\T_\ell^{n_\ell})$, define the function $\Phi(\phi_\ell)$ on lattices $L$ in $K$ by the formula
$$\Phi(\phi_\ell)(L)=\phi_\ell(L_\ell)\prod_{w\in W}\phi_w(L_w).$$

\end{definition}
\begin{lemma}\label{new}
The infinite product in the definition makes sense and $\Phi(\phi_\ell)$ is $q$-homogeneous.
The map $\phi_\ell\mapsto \Phi(\phi_\ell)$ is $\F$-linear.
\end{lemma}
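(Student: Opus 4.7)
The plan is to verify the three assertions—finiteness of the product, $q$-homogeneity, and $\F$-linearity—in that order. The first and third should be essentially formal; the $q$-homogeneity is the only step with genuine content.

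For finiteness, I would argue that for any lattice $L\subset K$, after scaling by a positive integer we may assume $L\subseteq\OK$, with $N_L=[\OK:L]<\infty$. Then for every finite place $w\nmid N_L$ one has $L_w=\OK_w=\Ow$, so $\phi_w(L_w)=\phi_w(\Ow)=1$ by the normalization assumption on the $\phi_w$. Only finitely many factors in $\prod_{w\in W}\phi_w(L_w)$ differ from $1$, so the product reduces to a finite one. The $\F$-linearity of $\phi_\ell\mapsto\Phi(\phi_\ell)$ is then immediate, since every factor at $w\in W$ depends only on $L$, not on $\phi_\ell$.

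For $q$-homogeneity, I would proceed as follows. Fix $\alpha\in\Q^\times\cap K_{S_0}$ and a lattice $L\subset K$. By the earlier lemma stating that elements of $K_{S_0}$ are coprime to $pdN$, we have $v_w(\alpha)=0$ for every $w\mid pdN$. At each finite place $w$, factor $\alpha=w^{v_w(\alpha)}u_w$ with $u_w\in\Z_{(w)}^\times\subseteq\Z_w^\times$; since $L_w$ is a $\Z_w$-module, $u_wL_w=L_w$, and hence $(\alpha L)_w=w^{v_w(\alpha)}L_w$. Iterating Definition~\ref{D:q-homogeneous} (and its inverse when $v_w(\alpha)<0$) then gives $\phi_w((\alpha L)_w)=q(wI)^{v_w(\alpha)}\phi_w(L_w)$ at $w=\ell$ and at every $w\in W$. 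Multiplying over these places, the prefactor in front of $\Phi(\phi_\ell)(L)$ is $\prod_{w\nmid pdN}q(wI)^{v_w(\alpha)}$, which I would identify with $q(\alpha I)$ using the multiplicativity of $q$, the earlier remark that $q(-I)=1$ (since $K$ is real quadratic), and Lemma~\ref{Lemma:q*-q} applied to the integer numerator and denominator in a fractional representation $\alpha=m/n$.

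The only mildly delicate point, worth flagging, is that primes dividing $pdN$ are deliberately omitted from the product defining $\Phi(\phi_\ell)$; their trivial contribution to $q(\alpha I)$ is ensured precisely by the restriction $\alpha\in K_{S_0}$, so no compensating factors at bad primes are required. I do not expect any real obstacle in carrying this out.
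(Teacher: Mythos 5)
Your proposal is correct and follows essentially the same route as the paper: treat finiteness and linearity as formal, then prove $q$-homogeneity by factoring $\alpha$ into local prime powers, using that $\alpha\in K_{S_0}$ forces $v_w(\alpha)=0$ for $w\mid pdN$, observing $(\alpha L)_w=w^{v_w(\alpha)}L_w$, and applying the homogeneity of each local $\phi_w$ to pull out the factor $\prod q(wI)^{v_w(\alpha)}=q(\alpha I)$. The only difference is cosmetic: you make the local unit factorization $\alpha=w^{v_w(\alpha)}u_w$ with $u_w\in\Z_w^\times$ explicit, where the paper just writes $\alpha=\ell^{f_\ell}\prod_w w^{f_w}$ and silently uses $u_wL_w=L_w$.
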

\begin{proof}
For any given $L$, we have that $L_w=\Ow$ for almost all $w$, so the product is actually finite.  The linearity of the map $\phi_\ell\mapsto \Phi(\phi_\ell)$ is clear.  Now suppose $\alpha\in \Q^\times\cap K_{S_0}$ and $L$ is a lattice.
Then $\alpha$ is prime to $pdN$ and factors as 
\[
\alpha=\ell^{f_\ell}\prod_{w\in W} w^{f_w}.
\]
  Then
\[\Phi(\phi_\ell)(\alpha L)=\phi_\ell(\alpha L_\ell)\prod_{w\in W}\phi_w(\alpha L_w)=
\phi_\ell(\ell^{f_\ell} L_\ell)\prod_{w\in W}\phi_w(w^{f_w} L_w)
.\]
Since $\phi_\ell$ and all the $\phi_w$ are homogeneous, this equals 
\[
q(\ell^{f_\ell}I)\phi_\ell( L_\ell)\left(\prod_{w\in W}q(w^{f_w}I)\right)\left(\prod_{w\in W}\phi_w(L_w)\right)=
q(\alpha I)\Phi(\phi_\ell)(L).\qedhere
\]
\end{proof}

We now proceed to the main theorem of this section: the comparison between the Hecke operator and the Laplace operator.

By Lemma~\ref{Lemma:mL} and the fact that $\KSQ$ is unit-cofinite,
we see that for any lattice $L\subseteq K$, there is a minimal positive integer $m_L$ such that $\epsilon^{m_L}L=L$ and one of $\pm\epsilon^{m_L}\in\KSQ$.    If $L_1$ and $L_2$ are $K^\times$-homothetic lattices in $K$, it is clear that $m_{L_1}=m_{L_2}$.  
 Set $m'_L=m_L/i^S$. By Theorem~\ref{stabgen}, if $L=L(a,b)$ and $x$ is the image in $\XSQ$ of $y={}^t(a,b)$ then $m_L=m_x$.  Therefore, by Lemma~\ref{is}, $i^S|m_L$ and $m'_L$ is a positive integer.

\begin{definition} Let $\psi_\ell\in F(\T_\ell^{n_\ell})$.  We define the {\em transform} of $\psi_\ell$ to be the function $\hat\psi_\ell\in F(\T_\ell^n)$ given by the formula
\[\hat\psi_\ell(t^n)=m'_L\psi_\ell(t^n),\]
where $L$ is any lattice in $\OK$ of $\ell$-power index, such that $L_\ell$ represents $t^n$.
\end{definition}

\begin{lemma} 
Given $\psi_\ell\in F(\T_\ell^n)$, the transform $\hat\psi_\ell$ is well defined.
\end{lemma}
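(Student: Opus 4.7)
The plan is to show that $m'_L$ depends only on the vertex $t^n$, not on the particular lattice $L\subseteq \OK$ of $\ell$-power index chosen to represent it. Since $m'_L = m_L/i^S$ with $i^S$ a fixed positive integer, it suffices to prove that $m_L$ depends only on the $n_\ell$-homothety class of $L_\ell$.

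The first step exploits the hypothesis that $L \subseteq \OK$ has $\ell$-power index: the quotient $\OK/L$ is a finite abelian $\ell$-group, so after tensoring with $\Z_w$ for any finite place $w\neq\ell$ it vanishes, forcing $L_w = \OK_w$. Using that a lattice in $K$ is determined by the collection of its completions (as recalled in the excerpt preceding Lemma~\ref{ell+1}), for any $\delta \in \OK^\times$ the condition $\delta L = L$ is equivalent to $\delta L_w = L_w$ at every finite place $w$. Since a global unit $\delta$ acts invertibly on each $\OK_w$, the condition is automatic for $w \neq \ell$, yielding
\[\delta L = L \iff \delta L_\ell = L_\ell.\]

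The second step checks invariance under $n_\ell$-homothety of the local lattice. If $L_\ell' = \alpha L_\ell$ with $\alpha \in V_{n_\ell}$, then by commutativity of $K_\ell$ we have $\delta L_\ell' = \alpha \delta L_\ell$, so $\delta L_\ell' = L_\ell'$ if and only if $\delta L_\ell = L_\ell$. Combining the two steps, the set of elements $\delta = \pm\epsilon^m \in \KSQ$ satisfying $\delta L = L$ depends only on the $n_\ell$-homothety class of $L_\ell$, i.e. only on $t^n$. The additional constraint $\pm\epsilon^m \in \KSQ$ involves only the fundamental unit and the fixed subgroup $\KSQ$, not $L$. Therefore the minimal positive $m$ satisfying both conditions, which is $m_L$ by definition, is a function of $t^n$ alone; dividing by $i^S$ gives $m'_L$ as a well-defined integer depending only on $t^n$, so $\hat\psi_\ell(t^n)$ is well defined.

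I do not anticipate a serious obstacle: the proof is essentially bookkeeping with completions once one notices that the $\ell$-power index hypothesis on $L$ renders every place $w \neq \ell$ irrelevant to the computation of the stabilizer $\{\delta \in \OK^\times : \delta L = L\}$.
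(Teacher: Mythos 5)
Your proof is correct, and it takes a slightly different route from the paper's. The paper invokes \cite[V.2, Theorem 2]{Weil} to conclude that there is a \emph{unique} lattice $L\subseteq\OK$ of $\ell$-power index with $L_\ell=\Lambda$ for a given $\Lambda$; combined with the fact that $\Lambda$ is determined by $t^n$ only up to $\ell^n$-homothety and that homothety leaves $m_L$ unchanged, this yields well-definedness. You instead characterize the unit stabilizer $\{\delta\in\OK^\times : \delta L = L\}$ directly: the $\ell$-power-index hypothesis forces $L_w=\OK_w$ for $w\neq\ell$, so by determination-by-completions the condition $\delta L=L$ reduces to $\delta L_\ell=L_\ell$, which is manifestly an $n_\ell$-homothety invariant. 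This bypasses the existence-and-uniqueness statement the paper leans on (you only need the weaker determination-by-completions fact, which the paper had already recalled) and directly isolates $m_L$ as a function of $t^n$; the paper's route is shorter given the Weil citation, while yours is a bit more self-contained and makes the local nature of the stabilizer explicit.
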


\begin{proof} We need to show that for $t^n\in\T_\ell^n$, the value of $m_L$ does not depend on the lattice $L$ chosen to represent $t^n$.  Note that up to homothety by powers of $\ell^n$, there is a unique lattice $\Lambda\subseteq K_\ell$ representing $t^n$.  By \cite[V.2, Theorem 2]{Weil} there is a unique lattice $L\subseteq \OK$ of $\ell$-power index such that $L_\ell=\Lambda$.  Since $\Lambda$ is uniquely defined up to homothety by powers of $\ell^n$, so too is $L$.  Finally, since homothety does not change the value of $m_L$, we see that $m_L$ does not depend on the choice of $L$, so $m'_L$ does not.
\end{proof}

If $\psi_\ell(\Ol)=1$, then $\hat\psi_\ell(\Ol)=1$, since $m'_{\OK}=1$.  In addition, if $\F$ has characteristic $0$, then a function $\psi_\ell$ is determined by its transform; this fails if any $m'_L$ is divisible by the characteristic of $\F$.

\begin{lemma}
 Let 
$\ell\nmid pdN$ be prime.  If $\psi_\ell\in F(\T_\ell^n)$ is homogeneous, then $\hat\psi_\ell$ is also homogeneous.
\end{lemma}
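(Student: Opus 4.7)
The plan is to unwind the definitions and reduce the claim to the $K^\times$-homothety invariance of $m_L$ (which was already noted in the paragraph preceding the definition of the transform).

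First, fix a lattice $L$ in $K$ and a lattice $L_0 \subset \OK$ of $\ell$-power index such that $(L_0)_\ell$ represents the same vertex as $L_\ell$; then $\hat\psi_\ell(L_\ell) = m'_{L_0}\psi_\ell(L_\ell)$. To compute $\hat\psi_\ell(\ell L_\ell)$, I would choose $\ell L_0$ as the lattice in $\OK$ of $\ell$-power index representing the vertex $\ell L_\ell$, which is permissible since $\ell L_0 \subseteq \OK$ still has $\ell$-power index. Thus
\[
\hat\psi_\ell(\ell L_\ell) = m'_{\ell L_0}\,\psi_\ell(\ell L_\ell).
\]

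Next, I would apply homogeneity of $\psi_\ell$ (Definition~\ref{D:q-homogeneous}) to rewrite $\psi_\ell(\ell L_\ell) = q(\ell I)\psi_\ell(L_\ell)$. The remaining point is that $\ell L_0$ and $L_0$ are $K^\times$-homothetic (via multiplication by $\ell \in \Q^\times \subset K^\times$), so by the observation just before the definition of the transform we have $m_{\ell L_0} = m_{L_0}$, whence $m'_{\ell L_0} = m'_{L_0}$. Combining these,
\[
\hat\psi_\ell(\ell L_\ell) = m'_{L_0}\,q(\ell I)\,\psi_\ell(L_\ell) = q(\ell I)\,\hat\psi_\ell(L_\ell),
\]
which is exactly the homogeneity of $\hat\psi_\ell$.

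There is no real obstacle here; the only thing to check carefully is that $\ell L_0$ is still a valid representative (i.e.\ contained in $\OK$ with $\ell$-power index), which is obvious, and that scaling by $\ell$ does not change $m'_{L_0}$, which is immediate from $K^\times$-homothety invariance. No prime $\ell\nmid pdN$ hypothesis is actually needed for this lemma beyond what is used implicitly to define $\T_\ell^n$ and the homogeneity condition.
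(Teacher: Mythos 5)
Your proof is correct and follows essentially the same route as the paper's: pick a representative lattice of $\ell$-power index in $\OK$, observe that $\ell$ times it represents the scaled vertex, invoke $K^\times$-homothety invariance of $m_L$ to get $m'_{\ell L} = m'_{L}$, and apply homogeneity of $\psi_\ell$. The only difference is a minor notational elaboration (introducing a separate $L_0$); the substance is identical.
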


\begin{proof}
If $t^n\in\T_\ell^n$ is represented by $L_\ell$, with $L$ a lattice of $\ell$-power index in $\OK$, then $\ell t^n$ is represented by $\ell L_\ell$.  Since $m_{L}'=m_{\ell L}'$, we have
\[\hat\psi_\ell(\ell t^n)=m_{\ell L}'\psi_\ell(\ell t^n)=m_L'q(\ell I)\psi_\ell(t^n)=q(\ell I)\hat\psi_\ell(t^n).\qedhere\]
\end{proof}

We now fix a set $\A_0$ of representatives of the $Z\,\gl(2,\Z)$-orbits in $\XSQ$.  Recall from Lemma~\ref{Lemma:homothety-to-cohomology} that this choice fixes an isomorphism between the cohomology group
\[H^1(\gl(2,\Z),\MSQ^*)\]
and $q$-homogenous, $\KSQ$-invariant functions on lattices.

\begin{theorem}\label{Theorem:hecke-transform}
Let $\ell\nmid pdN$ be prime.
For each finite place $w\in W$, fix a homogeneous function $\phi_w\in F(\T_\ell^{n_w})$, as in Definition~\ref{Def:glob}. Let $n=n_\ell$, and let $\psi_\ell\in F(\T_\ell^n)$ be homogeneous.  Assume that $\Phi(\hat\psi_\ell)$ is $\KSQ$-homothety invariant.  (It will be $q$-homogeneous by Lemma~\ref{new}.)

As in Lemma~\ref{Lemma:homothety-to-cohomology} and its proof, view $\Phi(\hat\psi_\ell)$ as an element of 
\[
H^1(\GL(2,\Z),\MSQ^*)\cong H_1(\GL(2,\Z),\MSQ)^*.
\]  That is to say, view $\Phi(\hat\psi_\ell)$ as an $\F$-valued functional on $H_1(\GL(2,\Z),\MSQ)$, via the pairing
\[\langle\Phi(\hat\psi_\ell),\bullet\rangle:H_1(\GL(2,\Z),\MSQ)\to\F.\]

If $\psi_\ell$ is locally constant relative to $T_\ell$ and $x_0$, then
\[\langle\Phi(\hat\psi_\ell)T_\ell,z_0\rangle=m_0\langle\Phi(\Delta_\ell\psi_\ell),z_0\rangle.\]
\end{theorem}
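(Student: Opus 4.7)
The plan is to evaluate both sides explicitly using the lattice-function description of $H^1(\gl(2,\Z),\MSQ^*)$ from Lemma~\ref{Lemma:homothety-to-cohomology} together with the formula for the Hecke action on $H_1$ from Corollary~\ref{C:partial-hecke} (via Lemma~\ref{ad1}). First I would dualize: $\langle\Phi(\hat\psi_\ell)T_\ell,z_0\rangle=\langle\Phi(\hat\psi_\ell),T_\ell z_0\rangle$, and apply Corollary~\ref{C:partial-hecke} to expand $T_\ell z_0=\sum_{j=1}^J e_jq(\zeta_j)z_j$. Since the pairing $\langle\Phi(\hat\psi_\ell),z_j\rangle$ is evaluation at a representative lattice $L_j=\Lambda_{y_j}$ of $x_j\in\A$, this rewrites the left-hand side as
\[\sum_j e_jq(\zeta_j)\,\Phi(\hat\psi_\ell)(L_j).\]

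The crucial step is to transfer each evaluation from $L_j$ (which may not have $\ell$-power index in $\OK$) to the downhill-neighbor sublattice $s_{\beta,j}L_0\subseteq L_0$, for any choice of $s_{\beta,j}\in B_j$. Lifting the relation $s_{\beta,j}x_0=\zeta_jx_j$ to $Y$ gives $s_{\beta,j}L_0=r\alpha_{\beta,j}L_j$ with $\zeta_j=rI$ and $\alpha_{\beta,j}\in\KSQ$, and the assumed $\KSQ$-invariance and $q$-homogeneity of $\Phi(\hat\psi_\ell)$ yield
\[\Phi(\hat\psi_\ell)(L_j)=q(\zeta_j)^{-1}\Phi(\hat\psi_\ell)(s_{\beta,j}L_0),\]
which cancels the $q(\zeta_j)$ factor in the sum. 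I would then unfold $\Phi(\hat\psi_\ell)(s_{\beta,j}L_0)$ place by place: at $w\neq\ell$ in $W$, the matrix $s_{\beta,j}$ lies in $\GL(2,\Z_w)$, so $\phi_w((s_{\beta,j}L_0)_w)=\phi_w((L_0)_w)$ by Lemma~\ref{L:gl2zl-invariant}; at $\ell$, local constancy collapses $\psi_\ell((s_{\beta,j}L_0)_\ell)$ to a common value $\psi_j$ for all $\beta\in B_j$, and the transform factor in $\hat\psi_\ell$ is computed from the $m$-value of $L_j$ (which equals that of $s_{\beta,j}L_0$ by $K^\times$-homothety). Setting $P=\prod_{w\in W}\phi_w((L_0)_w)$, the left-hand side collapses to a sum over $j$ of $e_j$ times the transform factor at $j$ times $\psi_j$ times $P$.

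For the right-hand side, unfolding the definition of $\Phi$ gives $\Phi(\Delta_\ell\psi_\ell)(L_0)=(\Delta_\ell\psi_\ell)((L_0)_\ell)\,P$. By Lemma~\ref{ell+1}, the $\ell+1$ downhill neighbors of $(L_0)_\ell$ in $\T_\ell^{n_\ell}$ are the completions $(s_\alpha L_0)_\ell$, which partition into the blocks $\{(s_{\beta,j}L_0)_\ell\}_{\beta\in B_j}$ of size $d_j=|B_j|$. Local constancy then yields $(\Delta_\ell\psi_\ell)((L_0)_\ell)=\sum_j d_j\psi_j$, so the right-hand side equals $m_0\sum_j d_j\psi_j\,P$. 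Equating the two expressions reduces the theorem to the block-by-block identity pairing the partial-Hecke data $(e_j,m_j)$ with the Laplacian data $(d_j,m_0)$, namely $e_jm_j=d_jm_0$, which is furnished by Lemma~\ref{H5}.

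The main obstacle is the careful bookkeeping of the transform factor in $\hat\psi_\ell$ together with the use of $\KSQ$-invariance. Because a general representative $L_j$ need not have $\ell$-power index in $\OK$, the value $\hat\psi_\ell((L_j)_\ell)$ cannot be read off the definition of $\hat\psi_\ell$ directly; it is precisely the $\KSQ$-invariance of $\Phi(\hat\psi_\ell)$ that legitimizes the substitution $L_j\rightsquigarrow s_{\beta,j}L_0\subseteq L_0\subseteq\OK$, after which the $\ell$-adic local structure is under control, the transform factor can be computed unambiguously, and Lemma~\ref{H5} recombines the data on the two sides into the required equality.
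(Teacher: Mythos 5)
Your proposal follows essentially the same strategy as the paper's proof: dualize, expand via Lemma~\ref{ad1} and Corollary~\ref{C:partial-hecke}, pass from $L_j$ to a concrete sublattice of $L_0$, unfold the infinite product place by place, use local constancy at $\ell$, and recombine via Lemma~\ref{H5}. Your organization is in fact a bit cleaner in one respect: by using $\KSQ$-invariance of $\Phi(\hat\psi_\ell)$ to transfer directly from $L_j$ to $s_{\beta,j}L_0$, you sidestep the paper's passage through $t_jL_0$ and the chosen representative $\zeta_j^{-1}t_jy_0$, which streamlines the $q$-factor bookkeeping.

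There is, however, a genuine slip in the transform-factor bookkeeping at $\ell$. By definition, $\hat\psi_\ell(t^n)=m_L'\psi_\ell(t^n)$ with $m_L'=m_L/i^S$, so the transform factor attached to the block $j$ is $m_j'$, not $m_j$. Your reconciling identity $e_jm_j=d_jm_0$ from Lemma~\ref{H5} then gives $e_jm_j'=d_jm_0'$, i.e. the left side collapses to $m_0'\sum_jd_j\psi_jP$, whereas you write the right side as $m_0\sum_jd_j\psi_jP$ and declare the match. The two agree only when $i^S=1$. The paper's own proof carries $m_j'$ throughout and concludes with the constant $m_0'$; this is also the constant invoked in the proof of Corollary~\ref{laplace-eigenvector}. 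So the $m_0$ in the theorem statement appears to be a typo for $m_0'$, and your derivation inadvertently papers over it by treating the transform factor as $m_j$. You should carry $m_j'$, obtain $m_0'$, and note the discrepancy with the stated $m_0$.

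A second, more minor point: the claim $\phi_w((s_{\beta,j}L_0)_w)=\phi_w((L_0)_w)$ for $w\in W$ is correct, but not for the reason you cite. Lemma~\ref{L:gl2zl-invariant} says nothing about $\phi_w$ being $\gl(2,\Z_w)$-invariant (that holds only for the specific inert-prime function of Lemma~\ref{L:inert-invariant}). The real reason is stronger and simpler: since $\det s_{\beta,j}=\ell$ is a unit at $w$, both $s_{\beta,j}$ and $s_{\beta,j}^{-1}$ have entries in $\Z_w$, hence $(s_{\beta,j}L_0)_w=(L_0)_w$ as $\Z_w$-lattices and the two values of $\phi_w$ coincide trivially.
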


\begin{proof} 
By Lemma~\ref{ad1}, $T_\ell=\sum_{j=1}^J U_{t_j}$.
By Corollary~\ref{C:partial-hecke}, for $1\leq j\leq J$, we have
\[U_{t_j}(z_0)=e_jq(\zeta_j)z_j.\]
Then
\begin{align*}
\langle\Phi(\hat\psi_\ell)T_\ell,z_0\rangle&=\langle\Phi(\hat\psi_\ell),T_\ell z_0\rangle\cr
&=\sum_{j=1}^Jq(\zeta_j)e_j\langle\Phi(\hat\psi_\ell),z_j\rangle\cr
&=\sum_{j=1}^Jq(\zeta_j)e_jm_j'\langle\Phi(\psi_\ell),z_j\rangle.
\end{align*}
We have $e_jm_j'=m_0'd_j$ by Lemma~\ref{H5}, so
\[\langle\Phi(\hat\psi_\ell)T_\ell,z_0\rangle=\sum_{j=1}^Jq(\zeta_j)m_0'd_j\langle\Phi(\psi_\ell),z_j\rangle.\]

Now, for a fixed $j$, we will analyze the term $\langle\Phi(\psi_\ell),z_j\rangle$.  
Recall the definition of the partial Hecke operators $U_{t,j}$ and of the matrices  $s_{\beta,j}$ from the paragraphs before Lemma~\ref{ad1}. Also recall the matrices $t_j$   and the fact 
that $t_j\xS_0=\zeta_j\xS_j$ from the paragraphs before Lemma~\ref{L:stab-reps} .

Because $\Phi$ is invariant under $\KSQ$-homothety, we may choose $a_j,b_j\in K$ so that $\transpose{(a_j,b_j)}\in Y$ represents $\xS_j\in \XSQ$.  In fact, we choose $a_0,b_0\in K$ so that $\transpose{(a_0,b_0)}\in Y$ 
represents $\xS_0$, and then set
\[\column{a_j}{b_j}=\zeta_j^{-1}t_j\column{a_0}{b_0}.\]
We then obtain
\begin{align*}
\langle\Phi(\psi_\ell),z_j\rangle&=\Phi(\psi_\ell)(L(a_j,b_j))\cr
&=\psi_\ell(L(a_j,b_j)_\ell)\prod_{w \in W}\phi_w(L(a_j,b_j)_w).
\end{align*}
Since $t_j$ is an integral matrix with determinant $\ell$, we know that $t_j\in\gl(2,\OK_w)$ for all $w \in W$.  Factor
\[\zeta_j^{-1}=\left(\ell^{f_{\ell,j}}\prod_{w \in W}w^{f_{w,j}}\right)I.\]
(By Definition~\ref{D:q*}, primes not in $W\cup\{\ell\}$ cannot divide the numerators or denominators of the diagonal entries of the matrix $\zeta_j\in Z$.)

Then $L(a_j,b_j)_w$ is the same as the lattice $w^{f_{w,j}}L(a_0,b_0)_w$.  Set
\[c=\prod_{w\in W}\phi_w(L(a_0,b_0)_w).\]
Since  each $\phi_w$ is homogeneous, we obtain
\[\langle\Phi(\psi_\ell),z_j\rangle=\psi_\ell(L(a_j,b_j)_\ell)\ c\prod_{w \in W}q(w^{f_w,j}I).\]
Hence, we see that
\begin{align*}
\langle\Phi(\hat\psi_\ell)T_\ell,z_0\rangle&=\sum_{j=1}^Jq(\zeta_j)d_jm_0'\langle\Phi(\psi_\ell),z_j\rangle\cr
&=cm_0'\sum_{j=1}^Jq(\zeta_j)d_j\psi_\ell(L(a_j,b_j)_\ell)\prod_{w \in W}q(w^{f_{w,j}}I)\cr
&=cm_0'\sum_{j=1}^Jq(\ell^{f_{\ell,j}}I)d_j\psi_\ell(L(a_j,b_j)_\ell),\cr
\end{align*}
where we have used the factorization of $\zeta_j^{-1}$.
 
On the other hand, since $\psi_\ell$ is assumed to be locally constant with respect to $T_\ell$ and $x_0$, and any $s_{\beta,j}$ takes any vertex to a downhill neighbor, we have that for each $s_{\beta,j}$,
\begin{align*}
\psi_\ell(s_{\beta,j}L(a_0,b_0)_\ell)&=\psi_\ell(t_jL(a_0,b_0)\ell)\cr
&=\psi_\ell(\zeta_jL(a_j,b_j)\ell)\cr
&=\psi_\ell((\ell^{f_{\ell,j}})^{-1}L(a_j,b_j)_\ell)\cr
&=q((\ell^{f_{\ell,j}})^{-1}I)\psi_\ell(L(a_j,b_j)_\ell),
\end{align*}
since $\psi_\ell$ is homogeneous.

Hence, using the fact that $d_j=|B_j|$, we have that
\begin{align*}
\langle\Phi(\Delta_\ell^n\psi_\ell),z_0\rangle&=\Phi(\Delta_\ell^n\psi_\ell)(L(a_0,b_0))\cr
&=(\Delta_\ell^n(\psi_\ell)(L(a_0,b_0)_\ell)\prod_{w \in W}\phi_w(L(a_0,b_0)_w)\cr
&=c(\Delta_\ell^n(\psi_\ell)(L(a_0,b_0)_\ell)\cr
&=c\sum_{j=1}^J\sum_{s_{\beta,j}\in B_j}\psi_\ell(s_{\beta,j}L(a_0,b_0)_\ell)\cr
&=c\sum_{j=1}^Jd_jq((\ell^{f_{\ell,j}})^{-1}I)\psi_\ell(L(a_j,b_j)_\ell),
\end{align*}
where we have used Lemma~\ref{ell+1}.
Multiplying both sides of the last equality by $m_0'$ yields the assertion of the theorem, because $q$ has order $2$.
\end{proof}

\begin{corollary}\label{laplace-eigenvector}
For any $\xS_0\in \A_0$, let $z_0\in H_1(\stab{\xS_0},\F)$ be the corresponding homology generator, and let $L_0$ be a lattice corresponding to $\xS_0$.  Assume that $\psi_\ell$ is locally constant relative to $T_\ell$ and $x_0$.  Further, assume that $\Phi(\hat\psi_\ell)$ is $q$-homogeneous and $\KSQ$-invariant, and that $(\Delta_\ell^n\psi_\ell)((L_0)_\ell)=\mu\psi_\ell((L_0)_\ell)$.  Then
\[\langle\Phi(\hat\psi_\ell)T_\ell,z_0\rangle=\mu\langle\Phi(\hat\psi_\ell),z_0\rangle.\]
\end{corollary}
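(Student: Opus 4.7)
The plan is to derive this as an essentially immediate consequence of Theorem~\ref{Theorem:hecke-transform}, packaged with the observation that the transform $\psi_\ell\mapsto\hat\psi_\ell$ and the Laplacian are compatible at the distinguished vertex $(L_0)_\ell$.

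First I would invoke Theorem~\ref{Theorem:hecke-transform}: under the standing hypotheses that $\psi_\ell$ is locally constant relative to $T_\ell$ and $x_0$, and that $\Phi(\hat\psi_\ell)$ is both $q$-homogeneous and $\KSQ$-invariant, the theorem gives
\[
\langle\Phi(\hat\psi_\ell)T_\ell,z_0\rangle = m_{x_0}'\,\langle\Phi(\Delta_\ell^n\psi_\ell),z_0\rangle,
\]
so the problem reduces to identifying the right-hand side with $\mu\langle\Phi(\hat\psi_\ell),z_0\rangle$.

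Next I would unpack both pairings using the definition of $\Phi$ and the identification (Lemma~\ref{Lemma:homothety-to-cohomology}) of cohomology classes with lattice functions, whereby $\langle\Phi(\bullet),z_0\rangle = \Phi(\bullet)(L_0)$. Using Definition~\ref{Def:glob},
\[
\Phi(\Delta_\ell^n\psi_\ell)(L_0) = (\Delta_\ell^n\psi_\ell)((L_0)_\ell)\prod_{w\in W}\phi_w((L_0)_w),
\]
so applying the eigenvalue hypothesis $(\Delta_\ell^n\psi_\ell)((L_0)_\ell)=\mu\,\psi_\ell((L_0)_\ell)$ yields
\[
\Phi(\Delta_\ell^n\psi_\ell)(L_0) = \mu\,\psi_\ell((L_0)_\ell)\prod_{w\in W}\phi_w((L_0)_w) = \mu\,\Phi(\psi_\ell)(L_0).
\]

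Finally I would use the definition of the transform together with the observation that $m'_{L_0}=m'_{x_0}$, which follows from Theorem~\ref{stabgen} and Definition~\ref{D:mx'}, since $L_0=L(a_0,b_0)$ when $y_0=\transpose{(a_0,b_0)}$ represents $x_0$. Thus $\hat\psi_\ell((L_0)_\ell)=m'_{x_0}\psi_\ell((L_0)_\ell)$, giving
\[
\Phi(\hat\psi_\ell)(L_0) = m'_{x_0}\,\Phi(\psi_\ell)(L_0).
\]
Combining these yields $m'_{x_0}\langle\Phi(\Delta_\ell^n\psi_\ell),z_0\rangle = \mu\,m'_{x_0}\langle\Phi(\psi_\ell),z_0\rangle = \mu\langle\Phi(\hat\psi_\ell),z_0\rangle$, finishing the proof. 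There is no real obstacle here; the only step requiring any care is matching the factor $m'_{x_0}$ produced by Theorem~\ref{Theorem:hecke-transform} with the factor absorbed into the transform $\hat\psi_\ell$ at $(L_0)_\ell$, and this is built into the definitions.
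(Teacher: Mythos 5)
Your argument is correct and is essentially the same as the paper's: both derive the claim directly from Theorem~\ref{Theorem:hecke-transform}, unpack the pairing as evaluation at $L_0$, apply the eigenvalue hypothesis at $(L_0)_\ell$, and absorb the factor $m_0'$ into the transform $\hat\psi_\ell$. Your version merely spells out the bookkeeping (the identification $\langle\Phi(\bullet),z_0\rangle=\Phi(\bullet)(L_0)$ and the identity $m'_{L_0}=m'_{x_0}$) that the paper leaves implicit.
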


\begin{proof} From Theorem~\ref{Theorem:hecke-transform} and linearity we have
\begin{align*}
\langle\Phi(\hat\psi_\ell)T_\ell,z_0\rangle&=m_0'\langle\Phi(\Delta_\ell\psi_\ell),z_0\rangle\cr
&=m_0'\langle\Phi(\mu\psi_\ell),z_0\rangle\cr
&=\mu\langle\Phi(m_0'\psi_\ell),z_0\rangle\cr
&=\mu\langle\Phi(\hat\psi_\ell)T_\ell,z_0\rangle.\qedhere
\end{align*}
\end{proof}

\begin{corollary}\label{eigenclass} Assume that $\psi_\ell$ is locally constant relative to $T_\ell$ and every $x\in \XSQ$, that 
$\Phi(\hat\psi_\ell)$ is $q$-homogeneous and $\KSQ$-invariant, and that $\Delta_\ell\psi_\ell=\mu\psi_\ell$.  

Then $\Phi(\hat\psi_\ell)$ is an eigenclass for $T_\ell$ with eigenvector $\mu$ and it is an eigenclass for $T_{\ell,\ell}$ with eigenvector $\theta(\ell)$.
\end{corollary}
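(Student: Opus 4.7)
The plan is to prove the two eigenclass statements separately, each by unwinding which piece of structure is relevant.

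For the $T_\ell$ claim, I would use the Shapiro decomposition set up in Section~\ref{coefficients}: the map
\[H_1(\gl(2,\Z),\MSQ)\;\cong\;\bigoplus_{x\in\A_0}H_1(\stab{x},\F)\;\cong\;\bigoplus_{x\in\A_0}\F\,z_x\]
gives a basis $\{z_x\}_{x\in\A_0}$, so the functional $\Phi(\hat\psi_\ell)$ (and any $T_\ell$-image of it) is determined by pairings with these basis vectors. The strategy is then to apply Corollary~\ref{laplace-eigenvector} with each $x_0\in\A_0$ in turn. For any such $x_0$, local constancy of $\psi_\ell$ relative to $T_\ell$ and $x_0$ holds by hypothesis; $q$-homogeneity and $\KSQ$-invariance of $\Phi(\hat\psi_\ell)$ are given; and the hypothesis $\Delta_\ell\psi_\ell=\mu\psi_\ell$ in particular gives $(\Delta_\ell^n\psi_\ell)((L_0)_\ell)=\mu\psi_\ell((L_0)_\ell)$ for the lattice $L_0$ representing $x_0$. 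Corollary~\ref{laplace-eigenvector} then yields $\langle\Phi(\hat\psi_\ell)T_\ell,z_{x_0}\rangle=\mu\langle\Phi(\hat\psi_\ell),z_{x_0}\rangle$ for every $x_0\in\A_0$, so $\Phi(\hat\psi_\ell)T_\ell=\mu\Phi(\hat\psi_\ell)$.

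For the $T_{\ell,\ell}$ claim, I would exploit that this operator comes from the scalar $\zeta_\ell=\ell I$. Since $\ell\nmid pdN$, we have $\zeta_\ell\in Z\subset S_0$, and because $\zeta_\ell$ is central the double coset $\gl(2,\Z)\zeta_\ell\gl(2,\Z)$ collapses to the single coset $\gl(2,\Z)\zeta_\ell$. The action of $T_{\ell,\ell}$ on a cycle $\sum f_i\otimes m_i$ representing a class in $H_1(\gl(2,\Z),\MSQ)$ is therefore $\sum\zeta_\ell f_i\otimes\zeta_\ell m_i=\sum f_i\otimes\zeta_\ell m_i$, where centrality of $\zeta_\ell$ (which lies in $\gl(2,\Q)$) lets it pass across the tensor over $\gl(2,\Z)$. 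By Lemma~\ref{L:Z-action}, $\zeta_\ell$ acts on $\MSQ$ by the scalar $q(\zeta_\ell)=\theta(\ell)$, so $T_{\ell,\ell}$ acts on homology by the same scalar. Transposing through the pairing $\langle\Phi T_{\ell,\ell},v\rangle=\langle\Phi,T_{\ell,\ell}v\rangle$ gives $\Phi(\hat\psi_\ell)T_{\ell,\ell}=\theta(\ell)\Phi(\hat\psi_\ell)$.

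Neither part should present a real obstacle: both reduce to bookkeeping. The only point that requires some care is uniformity in $x_0$ in the first part, which is exactly why the hypothesis assumes local constancy relative to $T_\ell$ and \emph{every} $x\in\XSQ$, and why the Laplacian eigenvalue equation is stated as a global identity. The $T_{\ell,\ell}$ part is essentially automatic once one observes that a central Hecke operator acts by the associated central character.
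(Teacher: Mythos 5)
Your approach matches the paper's: apply Corollary~\ref{laplace-eigenvector} at each $x\in\A_0$ and use the Shapiro basis $\{z_x\}$ to conclude the $T_\ell$ statement, and handle $T_{\ell,\ell}$ by observing that the double coset of the central element $\ell I$ is a single coset acting via the central character $q$. Two small points, though. First, the paper's proof begins by checking $\Phi(\hat\psi_\ell)\neq 0$ (via $\Phi(\hat\psi_\ell)(\OK)=1$); since ``eigenclass'' means a nonzero class, this step is needed and your write-up omits it. Second, your justification that $T_{\ell,\ell}$ acts by the scalar is not quite right as stated: the relation $\gamma f\otimes m=f\otimes\gamma^{-1}m$ in $F_\bullet\otimes_{\F\gl(2,\Z)}\MSQ$ only applies for $\gamma\in\gl(2,\Z)$, and $\zeta_\ell=\ell I\notin\gl(2,\Z)$, so ``centrality lets it pass across the tensor'' is not a tensor identity. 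The correct justification is that $f\mapsto\zeta_\ell f$ is a chain map of complexes of $\F[\gl(2,\Z)]$-modules on $F_\bullet$ lifting the identity on $\F$ (this is where centrality matters), hence chain homotopic to the identity, so after tensoring with $\MSQ$ and passing to homology the class of $\sum\zeta_\ell f\otimes\zeta_\ell m(f)$ equals that of $\sum f\otimes\zeta_\ell m(f)=\theta(\ell)\sum f\otimes m(f)$ by Lemma~\ref{L:Z-action}. The conclusion you reach is correct and standard; only the explanatory sentence is imprecise.
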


\begin{proof} First, we show that $\Phi(\hat\psi_\ell)\neq 0$.  By definition,
\[\Phi(\hat\psi_\ell)(L)=\hat\psi_\ell(L_\ell)\prod_{w\in W}\phi_w(L_w).\]
By construction, $\phi_w(\OK_w)=1$ for every $w\in W$ and $\psi_\ell(\Ol)=1$.  Since $m'_{\Ol}=1$, also 
$\hat\psi_\ell(\OK_\ell)=1$.
Therefore, $\Phi(\hat\psi_\ell)(\OK)=1$.

For any $\xS\in\A_0$, write $z_{\xS}\in H^1(\stab{\xS},\F)$ for the homology generator corresponding to $\xS$.  By Corollary~\ref{laplace-eigenvector} and our hypothesis, for each $x\in\A_0$, we have
\[\langle\Phi(\hat\psi_\ell)T_\ell,z_x\rangle=\langle\mu\Phi(\hat\psi_\ell),z_x\rangle.\]
Since $\Phi(\hat\psi_\ell)$ is in the dual space to $H_1(\gl(2,\Z),\MSQ)$, and $\{z_\xS:\xS\in\A_0\}$ spans $H_1(\gl(2,\Z),\MSQ)$, we are finished with $T_\ell$.

As for $T_{\ell,\ell}$, its action is given by the double coset of the central element $\ell I$.  So this is just a single coset, and its action on homology is given by the central character $q$ on the coefficient module $\MSQ$.  Since $q(\ell I)=\theta(\ell)$,
\[\langle\Phi(\hat\psi_\ell)T_{\ell,\ell},z_x\rangle=\langle\Phi(\hat\psi_\ell),T_{\ell,\ell}z_x\rangle=\langle\Phi(\hat\psi_\ell),\theta(\ell)z_x\rangle=\langle\theta(\ell)\Phi(\hat\psi_\ell),z_x\rangle.\]
Hence, $\Phi(\hat\psi_\ell)T_{\ell,\ell}=\theta(\ell)\Phi(\hat\psi_\ell)$.
\end{proof}

\section{Constructing locally constant eigenfunctions}
Recall that $\theta$ is the quadratic Dirichlet character
 associated to the real quadratic field $K/\Q$, and $q$ is the character on $Z$ defined by setting $q(r I)=\theta(r)$ for $r\in \Z\cap K_{S_0}$ and extending multiplicatively to $Z$.  Since $K$ is real quadratic, $q(-I)=\theta(-1)=1$.  Fix an $\F$-valued character  $\chi$ on the group of ideals of $K$ relatively prime to $N$ for some positive integer $N$.

In this section, we will construct locally constant $q$-homogeneous functions $\psi_\ell^0$ on $\T_\ell^{n_\ell}$ that are  eigenfunctions of the Laplace operator with eigenvalues related to $\chi$. We do this first for inert primes $\ell$.

\begin{theorem}\label{T:inert-psi} Let $\ell$ be a prime of $\Q$ that is inert in $K/\Q$ and does not equal the characteristic of $\F$.  Then there is a locally constant $q$-homogeneous function $\psi_\ell^0\in F(\T_\ell^2)$ 
 that is an eigenvalue of the Laplace operator with eigenvalue 0 and satisfies $\psi_\ell^0(\Ol)=1$.
\end{theorem}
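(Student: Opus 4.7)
Since $\ell$ is inert, $q(\ell I) = \theta(\ell) = -1$, so the $q$-homogeneity requirement forces $\psi_\ell^0(\ell L) = -\psi_\ell^0(L)$: our function must be anti-equivariant for the deck transformation $L\mapsto \ell L$ of the double cover $\pi:\T_\ell^2\to\T_\ell$. For a lattice $L\subset K_\ell$ set $v(L) := \min\{v_\ell(x) : 0 \neq x \in L\} \in \Z$, where $v_\ell$ is the unique extension of the $\ell$-adic valuation to $K_\ell$. Because $V_2$ consists of scalars of even $\ell$-adic valuation, the parity $\sigma(t^2) := v(L)\bmod 2$ is a well-defined function $\sigma:\T_\ell^2\to\{0,1\}$, with $\sigma(\varpi(\Ol))=0$ and $\sigma(\varpi(\ell L)) = \sigma(\varpi(L))+1$; so $\sigma$ cleanly separates the two sheets of $\pi$. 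I will take $\psi_\ell^0$ to depend only on $\sigma(t^2)$ and the tier $k(t^2)$.

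The key input is the following combinatorial lemma: for any downhill edge $L'\subset L$ of index $\ell$ in $\T_\ell^2$, $v(L')=v(L)$ when the edge is outer, and $v(L')=v(L)+1$ when it is inner. To prove it, rescale so that $v(L)=0$, hence $L\subset \Ol$, and put $L$ in the ``Smith'' form $L = \Z_\ell e_1 + \Z_\ell \ell^k e_2$ for some $\Z_\ell$-basis $\{e_1,e_2\}$ of $\Ol$ (so $k$ is the tier). The $\ell+1$ index-$\ell$ sublattices of $L$ are parameterized by the $1$-dimensional $\F_\ell$-subspaces of $L/\ell L$. A direct enumeration shows that the subspace spanned by $\ell^k e_2$ yields $\Z_\ell\ell e_1 + \Z_\ell \ell^k e_2 = \ell(\Z_\ell e_1 + \Z_\ell \ell^{k-1} e_2)$, of $v=1$ and projecting to the unique tier-$(k-1)$ neighbor of $\pi(L)$ (the inner downhill neighbor), while each of the remaining $\ell$ subspaces gives a sublattice of $v=0$ projecting to a tier-$(k+1)$ vertex (the outer ones). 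The case $k=0$ is similar with no inner neighbor and all $\ell+1$ downhill sublattices outer with $v=0$.

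Given the lemma, define
\[
\psi_\ell^0(t^2) := (-1)^{\sigma(t^2)}\,a_{k(t^2)},\qquad a_{2j} := \ell^{-j},\quad a_{2j+1}:=0.
\]
Then $\psi_\ell^0(\Ol) = a_0 = 1$; the $q$-homogeneity is immediate from $\sigma(\varpi(\ell L)) = \sigma(\varpi(L))+1$; and because $\psi_\ell^0$ is a function of $(\sigma,k)$ alone and the lemma forces all outer downhill neighbors of a given vertex to share both invariants, $\psi_\ell^0$ is constant on them, so Lemma~\ref{loc-const} yields local constancy. It remains to verify $\Delta_\ell^2\psi_\ell^0 = 0$. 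At a tier-$0$ vertex, the $\ell+1$ (all outer) downhill neighbors have a common value $(-1)^\sigma a_1$, forcing the boundary condition $a_1=0$. At a tier-$k$ vertex with $k\geq 1$, the $\ell$ outer downhill terms contribute $\ell(-1)^\sigma a_{k+1}$ and the one inner downhill term contributes $(-1)^{\sigma+1} a_{k-1}$, giving the recursion $\ell a_{k+1} = a_{k-1}$, which our explicit formula solves uniquely given $a_0=1$, $a_1=0$ (using that $\ell$ is invertible in $\F$, since $\ell\neq\mathrm{char}\,\F$).

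The principal obstacle is the Smith-form enumeration that pins down the ``outer preserves $v$, inner increments $v$ by one'' dichotomy; once this dichotomy is in hand, the rest collapses to a two-term linear recursion and routine checks of normalization, homogeneity, and local constancy.
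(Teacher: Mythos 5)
Your construction is correct and in fact produces the same function as the paper: your closed form $\psi_\ell^0(t^2)=(-1)^{\sigma(t^2)}a_{k(t^2)}$ with $a_{2j}=\ell^{-j}$, $a_{2j+1}=0$ unwinds the paper's recursive definition exactly. The difference is one of packaging rather than strategy, but it is a worthwhile difference. The paper builds $\psi_\ell^0$ inductively tier by tier (``for vertices of odd tier set $\psi=0$; for a vertex of positive even tier take the uphill inner neighbor, then its downhill inner neighbor, and set $\psi=-\psi(v^2)/\ell$''), leaving implicit both why this prescription is well defined and why it is homogeneous and locally constant. You make the underlying combinatorics explicit: the valuation-parity invariant $\sigma$ cleanly labels the two sheets of $\pi:\T_\ell^2\to\T_\ell$, and your Smith-form lemma --- that a downhill edge preserves $v(L)$ if outer and increments it if inner --- is precisely the fact the paper uses (in its two-step ``uphill inner, then downhill inner'' form, which is why the sign $-$ appears) without stating. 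Once that lemma is in hand, $\psi_\ell^0$ becomes a function of the pair $(\sigma,\text{tier})$, so $q$-homogeneity, $\GL(2,\Z_\ell)$-invariance (hence local constancy via Lemma~\ref{loc-const}, since all outer downhill neighbors share both invariants), and the Laplacian eigenvalue condition all reduce to the two-term recursion $\ell a_{k+1}=a_{k-1}$ with $a_0=1$, $a_1=0$, which needs only $\ell\in\F^\times$. I would regard your version as the more transparent and complete write-up of what the paper is really doing.
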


\begin{proof} We define $\psi_\ell^0$ inductively. 

For vertices of tier 0, we define $\psi_\ell^0(\Ol)=1$ and $\psi_\ell^0(\ell\Ol)=\theta(\ell)=-1$.  We see easily that $\psi_\ell^0$ is homogeneous on the vertices of tier $0$.  

On vertices $t^2\in\T_\ell^2$ of tier 1, we define $\psi_\ell^0(t^2)=0$.  Clearly $\psi_\ell^0$ is $\theta$-homogeneous on vertices of tier 1.  In addition, since all downhill neighbors of a vertex of tier 0 have tier 1, we can now compute $\Delta_\ell^2(\psi_\ell^0)$ on vertices of tier 0; we find that its value is $0$, as desired. Finally, $\psi_\ell^0$ is constant on all downhill neighbors of vertices of tier 0.

On each vertex $t^2\in \T_\ell^2$ of tier 2, let $u^2\in \T_\ell^2$ be the unique uphill neighbor of $t^2$ of tier 1, and we let $v^2$ be the unique downhill neighbor of $u^2$ of tier 0.  We define $\psi_\ell^0(t^2)=-\psi_\ell^0(v^2)/\ell$.  Because the unique uphill neighbor of $\ell t^2$ of tier 1 is $\ell u^2$, which has a unique downhill neighbor of tier 0 equal to $\ell v^2$, we see that with this definition, $\psi_\ell^0$ is homogeneous on vertices of tier 1.  In addition, for any vertex $u^2$ of tier 1, $\psi_\ell^0$ is constant on the downhill neighbors of $u^2$ of higher tier, since its value on such vertices depends only on its value on the unique downhill inner neighbor of $u^2$.  Finally, we have constructed $\psi_\ell^0$ so that
\[\Delta_\ell^2(\psi_\ell^0)(u^2)=0\]
for each vertex $u^2$ of tier 1.

We continue; for vertices $t^2\in\T_\ell^2$ of odd tier, we define $\psi_\ell^0(t^2)=0$.  This guarantees that for vertices $u^2$ of even tier, $\Delta_\ell^2(\psi_\ell^0)(u^2)=0$, and that $\psi_\ell^0$ is constant on all downhill neighbors of $u^2$ of higher tier.  Further, with this definition, $\psi_\ell^0(\ell t^2)=0=\theta(\ell)\psi_\ell^0(t^2)$
so that $\psi_\ell^0$ is homogeneous on vertices of odd tier.

For a vertex $t^2\in\T_\ell^2$ of positive even tier, let $u^2$ be the unique uphill inner neighbor of $t^2$, and let $v^2$ be the unique downhill inner neighbor of $u^2$.  We define $\psi_\ell^0(t^2)=-\psi_\ell^0(v^2)/\ell$.  Clearly $\psi_\ell^0$ is constant on all downhill outer neighbors of $u^2$ (since its value on such neighbors depends only on its value on $v^2$).  As in the case of tier 2, we see that $\psi_\ell^0(\ell t^2)=\theta(\ell) \psi_\ell^0(t^2)$, and $\Delta_\ell^2(\psi_\ell^0)(u^2)=\psi_\ell^0(v^2)+\ell(-\psi_\ell^0(v^2)/\ell)=0$.

With this construction, we see that $\psi_\ell^0$ is homogeneous, locally constant, and is an eigenfunction of $\Delta_\ell^2$ with eigenvalue 0.
\end{proof}

\begin{lemma}\label{L:inert-invariant} 
For an inert prime $\ell$, the function $\psi_\ell^0$ defined above is $\gl(2,\Z_\ell)$-invariant.
\end{lemma}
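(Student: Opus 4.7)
The plan is to prove $\gl(2,\Z_\ell)$-invariance of $\psi_\ell^0$ by induction on the tier of vertices in $\T_\ell^2$, leveraging the recursive construction of $\psi_\ell^0$ from the proof of Theorem~\ref{T:inert-psi} together with Lemma~\ref{L:gl2zl-invariant}. The key observation is that Lemma~\ref{L:gl2zl-invariant} tells us that the action of any $\gamma\in\gl(2,\Z_\ell)$ on $\T_\ell^2$ preserves the tier of every vertex, preserves neighbor relations together with whether edges are uphill or downhill, and fixes every vertex of tier $0$. Thus $\gamma$ also preserves the predicates ``inner neighbor'' and ``outer neighbor,'' since these are defined purely in terms of the tier.

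With this in hand, the induction is straightforward. For the base case, the two vertices of tier $0$ are $\Ol$ and $\ell\Ol$; these are fixed by every $\gamma\in\gl(2,\Z_\ell)$ (since $\gamma\Ol=\Ol$ and scalar multiplication by $\ell$ commutes with $\gamma$), and $\psi_\ell^0$ takes the values $1$ and $-1$ respectively, so $\psi_\ell^0(\gamma t^2)=\psi_\ell^0(t^2)$ trivially. For vertices of odd tier, $\psi_\ell^0$ is identically $0$, so invariance is automatic.

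The inductive step handles vertices $t^2$ of positive even tier $2k$. By the construction of Theorem~\ref{T:inert-psi}, the value $\psi_\ell^0(t^2)$ equals $-\psi_\ell^0(v^2)/\ell$, where $u^2$ is the unique uphill inner neighbor of $t^2$ and $v^2$ is the unique downhill inner neighbor of $u^2$; in particular $v^2$ has tier $2k-2$. For any $\gamma\in\gl(2,\Z_\ell)$, applying $\gamma$ to the entire neighbor configuration preserves uniqueness, tier, and the uphill/downhill distinction, so the unique uphill inner neighbor of $\gamma t^2$ is $\gamma u^2$ and the unique downhill inner neighbor of $\gamma u^2$ is $\gamma v^2$. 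By the inductive hypothesis, $\psi_\ell^0(\gamma v^2)=\psi_\ell^0(v^2)$, and therefore
\[
\psi_\ell^0(\gamma t^2)=-\psi_\ell^0(\gamma v^2)/\ell=-\psi_\ell^0(v^2)/\ell=\psi_\ell^0(t^2).
\]

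The main thing to be careful about is that the recursive formula used to define $\psi_\ell^0$ at $t^2$ truly involves only data intrinsic to the Bruhat--Tits structure (tier, uphill/downhill, inner/outer), which is why $\gl(2,\Z_\ell)$ moves these defining data in parallel with $t^2$. This is the only real content, and it is guaranteed by Lemma~\ref{L:gl2zl-invariant}; there is no serious obstacle.
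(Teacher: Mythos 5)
Your proof is correct and follows the same route as the paper: both rely on Lemma~\ref{L:gl2zl-invariant} to show that the action of $\gl(2,\Z_\ell)$ preserves tiers and uphill/downhill neighbor relations and fixes the tier-0 vertices, so that the recursively defined values of $\psi_\ell^0$ are preserved. The paper states this in two sentences without the explicit induction, but your argument is simply that proof written out in full detail.
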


\begin{proof} The action of $\gl(2,\Z_\ell)$ fixes vertices of tier 0, and preserves uphill and downhill neighbors, and the tier of each vertex (Lemma~\ref{L:gl2zl-invariant}). Since these relationships determine the values of $\psi_\ell^0$, the function is $\gl(2,\Z_\ell)$-invariant. 
\end{proof}

For a prime $\ell$ that splits in $K/\Q$ and does not divide $N$, we now prepare to construct a locally constant homogeneous function $\psi_\ell^0\in F(\T_\ell^1)$ that is an eigenfunction of $\Delta_\ell=\Delta_\ell^1$.   For the remainder of this section, we will assume that $\ell$ splits in $K$, that $(\ell)=\lambda\lambda'$ and that $\ell\nmid N$, so that $\chi(\lambda)$ and $\chi(\lambda')$ are defined.  In this case, the function that we construct will depend not only on the real quadratic field $K/\Q$, but also on the character $\chi$. Since we work in $\T_\ell^1=\T_\ell$, the concepts of uphill and downhill neighbors coincide.

We begin by defining some terminology and notation for subsets of $\T_\ell$.

\begin{definition}\label{base} We take $\Ol$ as the basepoint of $\T_\ell$ and denote it by $t_0$.  A {\em descendant} of a vertex $t\in\T_\ell$ is a vertex $t_1\neq t$ such that the path from $t_0$ to $t_1$ passes through $t$.  Denote by $C(t)$ the set of all descendants $t'$ of $t$ such that every vertex of the path from $t$ to $t'$ except possibly $t$ is non-idealistic, and let $\overline C(t)=C(t)\cup\{t\}$.  We call $C(t)$ the {\em open cohort} of $t$, and $\overline C(t)$ the {\em closed cohort} of $t$.
\end{definition}

\begin{definition} A {\em simple chain} starting at a vertex $t\in\T_\ell$ is a collection $C$ consisting of $t$ and descendants of $t$ such that for any pair $t',t''\in C$, one of $t',t''$ is a descendant of the other.  An {\em apartment} in $\T_\ell$ is a union of two infinite simple chains starting at a vertex $t$ and having no other vertices in common.
\end{definition} 

\begin{lemma} Let $t$ be an idealistic point in $\T_\ell$.  
\begin{enumerate}[(1)]
\item If $\ell$ is inert, then $t=t_0$.
\item If $(\ell)=\lambda\lambda'$ splits and $t$ is a distance $k>0$ from $t_0$, then $t=\lambda_\ell^k$ or $t=\lambda_\ell'^k$, and both of these points are a distance $k$ from $t_0$.
\item If $(\ell)$ splits and $k>0$, then $\lambda_\ell^k$ and $\lambda_\ell'^k$ define distinct points in $\T_\ell$.
\item No descendant of an non-idealistic point in $\T_\ell$ is idealistic.
\item The vertices of $\T_\ell$ are partitioned into the closed cohorts $\overline C(t_I)$ as $t_I=I_\ell$ runs over the idealistic points of $\T_\ell$ (where $I$ is an ideal of $\OK$ of $\ell$-power norm.)
\item In the split case, the set of idealistic points of $\T_\ell$ form an apartment, namely 
$$\{{\lambda_\ell^k}|k>0\}\cup\{t_0\}\cup \{{\lambda_\ell'^k}|k>0\}.$$
\end{enumerate}
\end{lemma}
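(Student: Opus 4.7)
The plan is to reduce every clause to an explicit computation inside $K_\ell$ via the identification of Definition~\ref{identify}, combined with the observation (made in the paper just before the lemma) that the completion $I_\ell$ of a fractional ideal depends only on the $\ell$-primary part of $I$, and with standard properties of $\T_\ell$ (unique geodesics, distances realized by strict lattice chains of successive index $\ell$). Recall that in $\T_\ell=\T_\ell^1$, homothety is allowed by the full group $\Q_\ell^\times$, so a lattice $L$ and $\ell^k L$ represent the same vertex.

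For (1), any $\ell$-primary fractional ideal in the inert case is $(\ell^k)$, so $I_\ell=\ell^k\Ol$ is $\Q_\ell^\times$-homothetic to $\Ol$, giving $t=t_0$. For (2), in the split case an $\ell$-primary ideal has the form $\lambda^a(\lambda')^b$ and, under the identification, $I_\ell=\ell^a\Z_\ell\times\ell^b\Z_\ell$. Factoring out $\ell^{\min(a,b)}\in\Q_\ell^\times$ collapses this vertex either to $\lambda_\ell^{a-b}$ or to $(\lambda')_\ell^{b-a}$, so the idealistic vertices at positive distance are exactly $\lambda_\ell^k$ and $(\lambda')_\ell^k$ for $k>0$; the distance is witnessed by the strict chain $\Z_\ell\times\Z_\ell\supset\ell\Z_\ell\times\Z_\ell\supset\cdots\supset\ell^k\Z_\ell\times\Z_\ell$ of successive index $\ell$. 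Clause (3) is then immediate: a $\Q_\ell^\times$-homothety between $\ell^k\Z_\ell\times\Z_\ell$ and $\Z_\ell\times\ell^k\Z_\ell$ would need $c\in\Q_\ell^\times$ with both $v_\ell(c)=-k$ and $v_\ell(c)=k$, which is impossible for $k>0$.

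For (4), I would observe that by (2) the unique geodesic from $t_0$ to any idealistic vertex consists entirely of idealistic vertices (it is the chain $t_0,\lambda_\ell,\dots,\lambda_\ell^k$ or its $\lambda'$-analogue, or it is trivial in the inert case). Hence if an idealistic $t'$ is a descendant of $t$, then $t$ lies on that all-idealistic geodesic and is itself idealistic; contrapositively, descendants of non-idealistic vertices are non-idealistic. For (5), given any $v$, let $t_I$ be the last idealistic vertex on the geodesic from $t_0$ to $v$; by (4) no strictly later vertex on that geodesic is idealistic, so $v\in\overline C(t_I)$. Uniqueness follows because any $t$ with $v\in\overline C(t)$ must lie on the geodesic from $t_0$ to $v$ and, by the defining condition on $\overline C(t)$, must be the last idealistic vertex of the geodesic, hence equal to $t_I$. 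For (6), the two chains $\{\lambda_\ell^k:k\ge 0\}$ and $\{(\lambda')_\ell^k:k\ge 0\}$ are simple chains starting at $t_0$ (each $\lambda_\ell^{k+1}$ is a descendant of $\lambda_\ell^k$), their union is the full set of idealistic vertices by (2), and by (3) together with the distance computation they share only $t_0$.

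The main obstacle I expect is keeping straight which homothety group is acting: one must use $\Q_\ell^\times$-homothety throughout so that powers of $\ell$ collapse, in contrast with the branched setting $\T_\ell^n$ ($n>1$) where this is forbidden. A slip here could either merge the two chains $\lambda_\ell^k$ and $(\lambda')_\ell^k$ or spuriously split $t_0$. The rest is bookkeeping with the unique-geodesic property of the tree.
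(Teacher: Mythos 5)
Your proof is correct and follows essentially the same route as the paper: both rest on the explicit classification of idealistic vertices as the $\ell$-primary ideal completions (established just after Definition~\ref{D:idealistic}), then use the tree/unique-geodesic structure of $\T_\ell$ to get (4)--(6). Your argument for (3) phrased via valuations of a would-be homothety factor $c$ is an equivalent restatement of the paper's ``$\lambda^k=\ell^m\lambda'^k$ is absurd,'' and your explicit handling of the cases $a>b$, $a<b$, $a=b$ in (2) and your uniqueness argument in (5) via the ``last idealistic vertex'' characterization are slightly more detailed than the paper's but carry the same content.
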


\begin{proof} 
In the discussion following Definition~\ref{D:idealistic}, we proved that the set of idealistic nodes of $\T_\ell$ is
$\{t_0\}$ if $\ell$ is inert and $\{{\lambda_\ell^k}|k>0\}\cup\{t_0\}\cup \{{\lambda_\ell'^k}|k>0\}$ if $\ell$ is split.  Since
$\lambda^k$ has index $\ell$ in $\lambda^{k-1}$, and similarly for the powers of $\lambda'$, 
(1) and (2) are now clear.
As for (3), if $\lambda_\ell^k=\lambda_\ell'^k$, then $\lambda^k=\ell^m\lambda'^k$ for some integer $m$, which is absurd.

 If $\ell$ is inert, (4) and (5) are obvious.

Assume then that $\ell$ splits.  Then the idealistic point $\lambda_\ell^k$ is at the end of a path containing the nodes $t_0,\lambda_\ell,\ldots,\lambda_\ell^k$.  A similar statement holds for $\lambda_\ell'^k$.  Since every non-idealistic node is a descendant of $t_0$ and $\T_\ell$ is a tree, no idealistic point can be a descendant of a non-idealistic point.  Hence (4) holds.

For any node $u\in\T_\ell$ consider the path from $t_0$ to $u$ (possibly of length 0.)  Let $t_I$ be the last idealistic point in this path.  Then $u\in\overline C(t_I)$ is in the closed cohort of this idealistic point.  If $u$ were in the closed cohort of two distinct idealistic points, there would be a nontrivial loop in $\T_\ell$.    Hence, (5) holds.

Finally, (6) is clear, since the set of nonnegative powers of $\lambda$ and of $\lambda'$ each form a simple chain starting at $t_0$.
\end{proof}

\begin{definition} Let $N$ be a positive integer and $c$ be an $\F$-valued multiplicative function on the group  $I_{K}(N)$ of nonzero fractional ideals of $K$ relatively prime to $N$. Fix a prime $\ell$ that does not divide $N$.  Assume that $c$ is trivial on the principal fractional ideal $\ell\OK$.  Define $\hat c\in F(\T_\ell)$ by 
\[\hat c(t)=\begin{cases}0&\text{if $t$ is non-idealistic,}\cr
c(I)&\text{if $t=I_\ell$, where $I$ is an ideal of $\ell$-power index in $\OK$.}\end{cases}
\]
\end{definition}

 \begin{lemma} The function $\hat c$ is well defined.\end{lemma}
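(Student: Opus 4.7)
The plan is to verify well-definedness by checking two things: first, that every idealistic vertex $t$ admits a representation $t = I_\ell$ with $I \subseteq \OK$ an ideal of $\ell$-power index; second, that the value $c(I)$ does not depend on the particular choice of such $I$. The non-idealistic case is immediate since $\hat c(t) = 0$ by definition.

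For the existence step, I would start from any fractional ideal $J$ of $K$ with $t = J_\ell$, which exists by the definition of idealistic vertex. Factor $J = J_1 J_2$, where $J_1$ is supported at primes above $\ell$ and $J_2$ is coprime to $\ell$. Then $(J_2)_\ell = \Ol$, and, using that completion commutes with multiplication of relatively prime ideals, $J_\ell = (J_1)_\ell$. If $J_1$ has negative exponents at some prime above $\ell$, multiplying by a sufficiently high power of $(\ell)$ clears them; since the completion of $(\ell)$ is just the scalar $\ell \in \Q_\ell^\times$, this preserves the $1$-homothety class of the lattice and hence the vertex. Thus $t$ is represented by an $\OK$-ideal of $\ell$-power index.

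For the uniqueness of the value, suppose $I$ and $J$ are two such ideals with $I_\ell = J_\ell$ as vertices of $\T_\ell$. In the inert case, $I = (\ell)^a$ and $J = (\ell)^{a'}$, so $I = (\ell)^{a-a'} J$ as fractional ideals; multiplicativity of $c$ combined with the hypothesis $c(\ell\OK) = 1$ yields $c(I) = c(J)$. In the split case with $(\ell) = \lambda \lambda'$, write $I = \lambda^a (\lambda')^b$ and $J = \lambda^{a'} (\lambda')^{b'}$ with nonnegative exponents. Under the identification $K_\ell \cong \Q_\ell \times \Q_\ell$, a scalar $\alpha \in \Q_\ell^\times$ acts diagonally, so the condition that $I_\ell$ and $J_\ell$ are $1$-homothetic forces the valuations in each factor to differ by the same integer; equivalently, $a - a' = b - b'$. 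Therefore $I = (\lambda\lambda')^{a-a'} J = (\ell)^{a-a'} J$, and again $c(I) = c(J)$.

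The only substantive step is the valuation comparison in the split case, which pins down the exponent differences; the rest is bookkeeping together with the hypothesis that $c$ is trivial on $\ell\OK$.
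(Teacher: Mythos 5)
Your proof is correct and takes essentially the same route as the paper: split into inert and split cases, show that two ideals $I,J$ of $\ell$-power index representing the same vertex differ by a power of $(\ell)$, and invoke $c(\ell\OK)=1$. The only differences are cosmetic — you include an explicit existence argument (the paper handles this in the discussion preceding Definition~\ref{D:idealistic}), and in the split case you compare valuations directly rather than first factoring out a common power of $\ell$, which if anything avoids a small ambiguity in the paper's parametrization when one of the exponents vanishes.
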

\begin{proof}
Suppose $I$ and $J$ are both ideals of $\OK$ of $\ell$-power index, and that 
$I_\ell$ and $J_\ell$ are homothetic in $K_\ell$ by a power of $\ell$.  

If $\ell$ is inert, then $I$ and $J$ are both powers of $\ell$.  They are thus both principal, and we see that $c(I)=c(J)=c(\OK)$.

If $(\ell)=\lambda\lambda'$ splits, then $I=\ell^q\mu^a$ and $J=\ell^r\nu^b$ for nonnegative integers $a,b,q,r$, and $\mu,\nu\in\{\lambda,\lambda'\}$.  The fact that $I_\ell$ and $J_\ell$ are homothetic implies that $\mu=\nu$ and $a=b$, so $I$ and $J$ differ by a factor of $\ell^{q-r}$.  Since $c$ is trivial on $\ell\OK$, $c(I)=c(J)$.
\end{proof}

Let $t\in\T_\ell$.  For any $x$ in the open cohort $C(t)$ of $t$, all of the neighbors of $x$ are in the closed cohort $\overline C(t)$.  Hence, the Laplace operator $\Delta_\ell$ defines a linear map from functions on $\overline C(t)$ to functions on $C(t)$.

\begin{lemma}\label{con split}
Assume that $\ell$ is not equal to the characteristic of $\F$.  Let $\mu\in \F$, and let $t$ be an idealistic point of $\T_\ell$ with closed cohort $\overline C(t)$.  Then there is a unique $\F$-valued function $\theta_{t,\mu}$ on $\overline C(t)$ with the following properties:
\begin{enumerate}[(i)]
\item\label{t-mu1} $\theta_{t,\mu}(t)=1$,
\item\label{t-mu2} $\theta_{t,\mu}(s)=0$ for every $s\in C(t)$ that is distance $1$ from $t$,
\item\label{t-mu4} $\theta_{t,\mu}(s)$ depends only on $\ell$, $\mu$, and the distance from $s$ to $t$,
\item\label{t-mu3} $\Delta_\ell(\theta_{t,\mu})(s)=\mu\theta_{t,\mu}(s)$ for every $s\in C(t)$.
\end{enumerate}
\end{lemma}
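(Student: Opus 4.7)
The plan is to exploit property (iii) to reduce the problem to a second-order linear recurrence in a single scalar variable. Write $a_k\in\F$ for the common value that property (iii) forces $\theta_{t,\mu}$ to take on every vertex of $\overline{C}(t)$ at distance $k$ from $t$. Property (i) gives $a_0=1$, and property (ii) gives $a_1=0$ (noting that the vertices of $\overline{C}(t)$ at distance $1$ from $t$ are precisely the non-idealistic neighbors of $t$, all of which lie in $C(t)$). The task then reduces to determining $a_k$ for $k\ge 2$ from property (iv).

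Next I would verify that $\Delta_\ell\theta_{t,\mu}(s)$ makes sense for every $s\in C(t)$, i.e.\ that all $\ell+1$ neighbors of such an $s$ already lie in $\overline{C}(t)$. Since $s$ is non-idealistic, part~(4) of the preceding lemma shows that every child of $s$ (neighbor at distance $\mathrm{dist}(s,t)+1$ from $t$) is non-idealistic and hence in $C(t)$; and the unique neighbor of $s$ at distance $\mathrm{dist}(s,t)-1$ lies on the geodesic from $t$ to $s$, so is either $t$ itself (when $\mathrm{dist}(s,t)=1$) or another non-idealistic vertex of $C(t)$. In $\T_\ell$, each vertex at positive distance $k$ from $t$ has exactly one neighbor at distance $k-1$ and $\ell$ neighbors at distance $k+1$, so
\[
\Delta_\ell\theta_{t,\mu}(s) \;=\; a_{k-1} + \ell\,a_{k+1}, \qquad k=\mathrm{dist}(s,t)\ge 1.
\]

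Equating this to $\mu a_k$ produces the recurrence
\[
a_{k+1} \;=\; \frac{\mu a_k - a_{k-1}}{\ell}, \qquad k\ge 1,
\]
which, together with $a_0=1$ and $a_1=0$, determines every $a_k$ uniquely, since $\ell$ is invertible in $\F$ by the hypothesis that $\ell$ is not the characteristic. Defining $\theta_{t,\mu}(s):=a_{\mathrm{dist}(s,t)}$ yields a function satisfying (i), (ii), and (iii) by construction, and satisfying (iv) by the computation above. Uniqueness is then immediate: any candidate function satisfying (iii) is determined by a sequence $(a_k)$, and (i), (ii), (iv) force that sequence to satisfy the same initial data and recurrence.

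The only subtle point is the combinatorial/geometric check that the Laplace sum for $s\in C(t)$ stays inside $\overline{C}(t)$, so that the recurrence is self-contained; this is where the structure of $\T_\ell$ (each vertex has valence $\ell+1$ with a unique geodesic to any other vertex) and the fact from the previous lemma that descendants of non-idealistic vertices are non-idealistic are used essentially. Once that is in place, the whole argument is a clean linear recursion.
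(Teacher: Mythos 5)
Your proposal is correct and follows essentially the same path as the paper: reduce via the distance-dependence property to a scalar sequence $(a_k)$ with $a_0=1$, $a_1=0$, derive the recurrence $a_{k+1}=(\mu a_k - a_{k-1})/\ell$ from the tree structure (one neighbor closer, $\ell$ neighbors farther), and get both existence and uniqueness from the uniqueness of the recurrence's solution. Your added remark carefully justifying that the Laplace sum for $s\in C(t)$ stays within $\overline{C}(t)$ is a nice explicit check that the paper leaves implicit.
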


\begin{proof}
Define a sequence $a_k\in\F$ for $k\geq 0$ by the recurrence relation $a_0=1$, $a_1=0$, and for $k\geq 2$, 
$$a_k=\frac{\mu a_{k-1}-a_{k-2}}{\ell}.$$
This clearly defines a unique sequence.  For $s$ a distance $k$ from $t$ in $\overline C(t)$, set $\theta_{t,\mu}(s)=a_k$.  With this definition, $\theta_{t,\mu}$ satisfies conditions \ref{t-mu1}, \ref{t-mu2}, and \ref{t-mu4}.

Given a point $s\in C(t)$ a distance $k$ from $t$, $s$ has one neighbor a distance $k-1$ from $t$, and $\ell$ neighbors a distance $k+1$ from $t$.  Hence
\begin{align*}
\Delta_\ell(\theta_{t,\mu})(s)&=a_{k-1}+\ell a_{k+1}\cr
&=a_{k-1}+\ell\left(\frac{\mu a_k-a_{k-1}}\ell\right)\cr
&=\mu a_k\cr
&=\mu\theta_{t,\mu}(s),
\end{align*}   
so $\theta_{t,\mu}$ satisfies condition \ref{t-mu3}.

Conversely, if $\theta_{t,\mu}$ is a function on $\overline C(t)$ satisfying condition  \ref{t-mu4}, then for any $s$ a distance $k$ from $t$, we may define $a_k=\theta_{t,\mu}(s)$. If in addition $\theta_{t,\mu}$ satisfies
 conditions  \ref{t-mu1}, \ref{t-mu2}, \ref{t-mu3}, the $a_k$ satisfy the recurrence relation given above. The uniqueness of $\theta_{t,\mu}$ follows from the uniqueness of the sequence $\{a_k\}$.
\end{proof}

\begin{definition}\label{D:split-psi} Let $\mu\in \F$, and assume $\ell$ does not divide $N$ and does not equal the characteristic of $\F$.  We define $\psi_\ell^0\in F(\T_\ell)$ by
$$\psi_\ell^0(s)=\hat\chi(t)\theta_{t,\mu}(s),$$
where $t\in \T_\ell$ is the unique idealistic vertex with $s\in \overline C(t)$.
\end{definition}

\begin{lemma}\label{lemsplit}
Let $\mu\in F$ and assume that $\ell$ does not divide $N$ and does not equal the characteristic of $\F$.
\begin{enumerate}[(1)]
\item $\psi_\ell^0(\Ol)=1$.
\item  $\psi_\ell^0$ is locally constant with respect to $T_\ell$ and any $x\in \XSQ$.
\item Let $\mu=\chi(\lambda)+\chi(\lambda')$.  Then
$$\Delta_\ell\psi_\ell^0=\mu\psi_\ell^0.$$
\end{enumerate}
\end{lemma}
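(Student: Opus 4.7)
My plan is to handle the three assertions sequentially, relying on Lemma \ref{con split}, Lemma \ref{loc-const}, and the fact (established earlier) that the idealistic vertices form an apartment $\{(\lambda')^k : k > 0\} \cup \{t_0\} \cup \{\lambda^k : k > 0\}$. Part (1) is immediate: $\Ol$ represents the idealistic vertex $t_0$, and by Lemma \ref{con split}(i) and the definition of $\hat\chi$,
\[\psi_\ell^0(\Ol) = \hat\chi(t_0)\theta_{t_0,\mu}(t_0) = \chi(\OK)\cdot 1 = 1.\]

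For part (2), since $n=1$ the notions of uphill and downhill coincide, so by Lemma \ref{loc-const} it suffices to show that for each vertex $t \in \T_\ell$, $\psi_\ell^0$ is constant on the set of non-idealistic outer neighbors of $t$. I split into two cases. If $t$ is non-idealistic, let $t'$ be the unique idealistic vertex with $t \in C(t')$; every outer neighbor $s$ of $t$ then also lies in $C(t')$ at the uniform distance $d(t,t')+1$ from $t'$, so by Lemma \ref{con split}(iii), $\theta_{t',\mu}(s)$ takes a single value and $\psi_\ell^0(s) = \hat\chi(t')\theta_{t',\mu}(s)$ is constant across these $s$. If $t$ is idealistic, each non-idealistic outer neighbor $s$ lies in $C(t)$ at distance $1$ from $t$, so $\psi_\ell^0(s) = \hat\chi(t)\theta_{t,\mu}(s) = 0$ by Lemma \ref{con split}(ii); all are $0$, hence constant.

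For part (3), I compute $\Delta_\ell \psi_\ell^0$ pointwise. If $s$ is non-idealistic and $t$ is the unique idealistic vertex with $s \in C(t)$, then every neighbor of $s$ lies in $\overline C(t)$ (non-idealistic descendants stay in $C(t)$, and $s$'s unique inner neighbor also lies in $\overline C(t)$), so by Lemma \ref{con split}(iv),
\[\Delta_\ell\psi_\ell^0(s) = \hat\chi(t)\,\Delta_\ell\theta_{t,\mu}(s) = \hat\chi(t)\,\mu\,\theta_{t,\mu}(s) = \mu\,\psi_\ell^0(s).\]
If $s$ is idealistic, the non-idealistic neighbors of $s$ all lie in $C(s)$ at distance $1$ and contribute $0$, so only the idealistic neighbors of $s$ (read off from the apartment) matter. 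For $s = t_0$ these are $\lambda, \lambda'$, giving $\Delta_\ell\psi_\ell^0(t_0) = \chi(\lambda)+\chi(\lambda') = \mu = \mu\cdot\psi_\ell^0(t_0)$.

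The main obstacle is the case $s = \lambda^k$ with $k \geq 1$ (and symmetrically $s = (\lambda')^k$). Here the idealistic neighbors are $\lambda^{k-1}$ and $\lambda^{k+1}$, so $\Delta_\ell\psi_\ell^0(\lambda^k) = \chi(\lambda)^{k-1}+\chi(\lambda)^{k+1}$, which must be shown to equal $\mu\,\chi(\lambda)^k = \chi(\lambda)^{k+1} + \chi(\lambda')\chi(\lambda)^k$. This reduces to the identity $\chi(\lambda)\chi(\lambda') = \chi(\ell\OK) = 1$, which is precisely the triviality hypothesis on $\chi$ at $\ell\OK$ built into the definition of $\hat\chi$. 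Without this hypothesis the eigenvalue property along the apartment would fail; with it, the recurrence defining $\theta_{t,\mu}$ dovetails perfectly with the multiplicativity of $\chi$, and the split case is established.
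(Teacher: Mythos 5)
Your proposal is correct and follows essentially the same route as the paper: part (1) from the definitions, part (2) via Lemma~\ref{loc-const} plus the distance-dependence in Lemma~\ref{con split}(iii), and part (3) by the cohort decomposition together with Lemma~\ref{con split}(iv). The only cosmetic difference is in the idealistic case of part (3): the paper treats a general idealistic vertex $s=I_\ell$ at once and writes $\Delta_\ell\psi_\ell^0(s)=\chi(\lambda I)+\chi(\lambda' I)=(\chi(\lambda)+\chi(\lambda'))\chi(I)$, whereas you split into $s=t_0$ and $s=\lambda^k$ and verify the same identity coordinate-by-coordinate along the apartment, making explicit that $\chi(\lambda)\chi(\lambda')=\chi(\ell\OK)=1$ is what closes the recurrence.
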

\begin{proof}
The first assertion is immediate from the definitions.

Let $s$ be any vertex in $\T_\ell$.  We wish to show that $\psi_\ell^0$ is constant on all non-idealistic outer downhill neighbors $u$ of $s$.  Then, by Lemma~\ref{loc-const}, part (2) will hold.

Let $s\in\bar C(t)$.  Then any such $u$ will be in $C(t)$.  Since $\hat\chi(t)$ is constant for all points in $C(t)$, we need only show that $\theta_{t,\mu}(u)$ is constant for all such $u$.  Letting the distance from $t$ to $s$ be $k-1$, the distance from $t$ to $u$ will be $k$.  Hence, the desired constancy follows from Lemma~\ref{con split}\ref{t-mu4}.

Now suppose that $s=t=I_\ell$ is idealistic. Then $s$ has exactly two idealistic neighbors, namely $(\lambda I)_\ell$ and $(\lambda' I)_\ell$.  The nonidealistic neighbors $u$ of $s$ are all in $C(t)$ and have distance 1 from $t$; hence $\theta_{t,\mu}$ vanishes on them all.  Hence
$$(\Delta_\ell\psi_\ell^0)(s)=\chi(\lambda I)+\chi(\lambda'I)=(\chi(\lambda)+\chi(\lambda'))\chi(I)=\mu\psi_\ell^0(t).$$

Finally, suppose that $s$ is non-idealistic and belongs to the open cohort $C(t)$.  Then
\begin{align*}
(\Delta_\ell\psi_\ell^0)(s)&=\sum_u\psi_\ell^0(u)\cr
&=\sum_u\hat\chi(t)\theta_{t,\mu}(u)\cr
&=\hat\chi(t)\sum_u\theta_{t,\mu}(u)\cr
&=\hat\chi(t)(\Delta_\ell\theta_{t,\mu})(s)\cr
&=\mu\hat\chi(t)\theta_{t,\mu}(s)\cr
&=\mu\psi_\ell^0(s),
\end{align*}
by Lemma~\ref{con split}\ref{t-mu3}, where the sums run over all neighbors $u$ of $s$.
\end{proof}

\section{$\KSQ$-invariance}\label{S:Invariance}

\begin{lemma}\label{aL}
Fix a prime $\ell$ that is unramified in $K$, and let $n=1$ if $\ell$ splits in $K$ and $2$ if $\ell$ is inert.  Let $L$ be a $\Z$-lattice in $K$, and let $\alpha\in K^\times$.  Let $s^n$ be the vertex in $\T_\ell^n$ corresponding to $L_\ell$, and let $u^n$ be the vertex corresponding to $(\alpha L)_\ell$.  Factor the fractional ideal $\alpha\OK=I_1I_2$, where $I_1$ has norm a power of $\ell$ and $I_2$ is prime to $\ell$.
\begin{enumerate}[(1)]
\item There exists a matrix $g\in \GL(2,\Q_\ell)$ depending only on $\alpha$ (independent of $L$), such that $u^n=gs^n$.  If $\ell$ is inert, then $g=\ell^k g'$ with $k\in\Z$ and $g'\in\gl(2,\Z_\ell)$.
\item The vertex $s^n$ is idealistic if and only if $u^n$ is idealistic.  If $s^n$ corresponds to $L_\ell$ with $L$ an ideal, then $u^n$ corresponds $(I_1L)_\ell$.
\item Suppose $\ell$ is split.  Assume that $s^n$ is not idealistic, but lies in the open cohort $C(t)$ of the idealistic point $t^n=M_\ell$, where $M$ is an ideal of $\ell$-power norm.  Then $u^n$ lies in the open cohort $C(t_1^n)$, where $t_1^n=(I_1M)_\ell$ and the distance between $s^n$ and $t^n$ is the same as the distance between $u^n$ and $t_1^n$.
\end{enumerate}
\end{lemma}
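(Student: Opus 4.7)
The plan is to realize multiplication by $\alpha$ on $K_\ell$ as a $\Q_\ell$-linear automorphism.  Under the fixed basis $\{1,\omega\}$ identifying $K_\ell$ with $\Q_\ell^2$, this multiplication is given by a matrix $g\in\GL(2,\Q_\ell)$ depending only on $\alpha$.  Since $(\alpha L)_\ell=\alpha\cdot L_\ell=g\cdot L_\ell$, the identity $u^n=g s^n$ in (1) is immediate.  When $\ell$ is inert, $\ell$ is a uniformizer of the local field $K_\ell$, so we may write $\alpha=\ell^k\eta$ with $k\in\Z$ and $\eta\in\Ol^\times$; multiplication by $\eta$ preserves $\Ol$, so its matrix $g'$ lies in $\GL(2,\Z_\ell)$, and $g=\ell^k g'$ as required.

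For (2), note that $g$ permutes lattices of $K_\ell$ and preserves $n$-homothety (since $\alpha$ commutes with scalars in $V_n$), hence acts on vertices of $\T_\ell^n$; the same applies to $g^{-1}$, which is multiplication by $\alpha^{-1}$.  It therefore suffices to show $g$ sends idealistic vertices to idealistic ones.  If $s^n$ is idealistic, then $L_\ell$ is $n$-homothetic to $J_\ell$ for some fractional ideal $J$, so $(\alpha L)_\ell$ is $n$-homothetic to $(\alpha J)_\ell$; since $\alpha J$ is a fractional ideal, $u^n$ is idealistic.  When $L$ itself is a fractional ideal, write $\alpha L=I_1 I_2 L$; since $I_2$ is prime to $\ell$, $(I_2)_\ell=\Ol$, and using that completion commutes with products of fractional ideals (cited earlier from \cite{Weil}), one computes $(\alpha L)_\ell=(I_1)_\ell L_\ell=(I_1 L)_\ell$.

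For (3), the decisive observation is that multiplication by any element of $\GL(2,\Q_\ell)$ is a graph automorphism of $\T_\ell$: it permutes vertices and preserves the index-$\ell$ edge relation, hence is an isometry.  By (2), $g$ moreover preserves the set of idealistic vertices, which in the split case is precisely the apartment $A=\{\lambda_\ell^k:k\geq 1\}\cup\{t_0\}\cup\{(\lambda'_\ell)^k:k\geq 1\}$ by the previous lemma.  I will then verify that for any non-idealistic $v$ the unique $t$ with $v\in C(t)$ is the nearest vertex of $A$ to $v$: because $\T_\ell$ is a tree, the unique geodesic from $t_0$ to $v$ first leaves $A$ at some idealistic vertex $t$, after which every further vertex on the geodesic is non-idealistic, so $v\in C(t)$ and $t$ is the nearest $A$-point.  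Since $g$ is an apartment-preserving isometry, it preserves the nearest-$A$-vertex, so $gs^n\in C(gt^n)$ at the same distance; by (2), $gt^n=(I_1M)_\ell=t_1^n$, finishing (3).

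The main obstacle is the cohort-versus-nearest-idealistic-vertex identification in (3); this relies essentially on the tree structure of $\T_\ell$ in the split case and on the apartment passing through $t_0$.  The remaining content of the lemma is elementary linear algebra together with the multiplicativity of completion at $\ell$.
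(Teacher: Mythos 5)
Your proof is correct and follows essentially the same route as the paper: realize multiplication by $\alpha$ as an element of $\GL(2,\Q_\ell)$, exploit the fact that completion at $\ell$ only sees the $\ell$-part of an ideal, and use that the isometry of $\T_\ell$ preserves the set of idealistic vertices.

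Two small remarks. First, in part (1) you say the identification of $K_\ell$ with $\Q_\ell^2$ is via the basis $\{1,\omega\}$ in all cases, but the paper uses that identification only for inert $\ell$; for split $\ell$ it identifies $K_\ell$ with $\Q_\ell^2$ via $t\mapsto(t,t')$, so the matrix for multiplication by $\alpha$ is $\diag(\alpha,\alpha')$. Since all you use is that the map is $\Q_\ell$-linear, this does not damage the argument, but it is worth matching the paper's conventions. Second, in part (3) you route through a characterization of $C(t)$ as the set of vertices whose nearest point of the apartment $A$ is $t$, whereas the paper argues more directly: take a simple path $R$ from $t^n$ to $s^n$ whose only idealistic vertex is $t^n$, observe that $gR$ is a simple path of the same length from $gt^n$ to $u^n$ whose only idealistic vertex is $gt^n$, and conclude $u^n\in C(gt^n)$ straight from the definition of cohort. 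Your nearest-$A$-vertex formulation is correct (and uses part (6) of the previous lemma, that the idealistic vertices form an apartment), but the path-mapping argument is slightly shorter and does not require that auxiliary fact. Both are valid.
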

\begin{proof} 
(1) First, suppose that $\ell$ is inert.  Via our identification of $K_\ell$ with $\Q_\ell^2$, multiplication by $\alpha$ is a 
$\Q_\ell$-linear isomorphism from $\Q_\ell^2$ to $\Q_\ell^2$; hence, it is given by a matrix $g\in\GL(2,\Q_\ell)$.  We can write $\alpha\in K_\ell$ as $\alpha=\ell^k\eta$ for some $k\in\Z$, and some unit $\eta\in\Ol^\times$; multiplication by $\eta$ is given by a matrix in $\gl(2,\Z_\ell)$.

Now assume that $\ell$ is split.  In this case, we identify $K_\ell$ with $\Q_\ell^2$ by mapping $\alpha$ to $(\alpha,\alpha')$.  Then multiplication by $\alpha$ is defined by the matrix
$$\begin{pmatrix}\alpha&0\cr0&\alpha'\end{pmatrix},$$
which is in $\GL(2,\Q_\ell)$.

(2) $L$ is a fractional ideal if and only if $\alpha L$ is a fractional ideal.  If $L=MP$ with $M$ a fractional ideal of $\ell$-power norm, and $P$ a fractional ideal prime to $\ell$, then
$$(aL)_\ell=(I_1M)_\ell=(I_1L)_\ell.$$

(3) Let $g\in\GL(2,\Q_\ell)$ be the matrix from part (1) corresponding to multiplication by $\alpha$.  Multiplication by $g$ is then an isometry of $\T_\ell$ that takes idealistic vertices to idealistic vertices, and non-idealistic vertices to non-idealistic vertices. Let $R$ be a simple path from $t^n$ to $s^n$ whose only idealistic vertex is $t^n$.  Then $gR$ is a simple path from $gt^n$ to $u^n$ of the same length as $R$, whose only idealistic vertex is $gt^n$.  Moreover, $u^n$ lies in the open cohort $C(gt^n)$ where $gt^n=(I_1M)_\ell$.\qedhere
\end{proof}

\begin{theorem}\label{L:KSQ-invariant} Let $\F$ be a field of characteristic $0$ or of finite characteristic not equal to two.  If $\F$ has characteristic 0, set $p=1$, and otherwise let $p$ be the characteristic of $\F$.
Assume that $\chi$ is trivial on principal ideals generated by elements of $\Q^\times\cap K_{S_0}$.  Also assume that $\chi$ is trivial on principal ideals generated by elements of $\KSQ$.
Let $\Phi$ be the function from lattices in $K$ to $\F$ defined by
\[\Phi(L)=\prod_{w\nmid pdN}\hat\psi_w^0(L_w).\]
Then $\Phi(\alpha L)=\Phi(L)$ for all $\alpha\in\KSQ$ and all lattices $L$ in $K$.

Moreover, $\Phi(\alpha L)=q(\alpha I)\Phi(L)$ for all $\alpha\in \Q^\times\cap K_{S_0}$.
\end{theorem}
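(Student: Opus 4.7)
The plan is to compute $\Phi(\alpha L)/\Phi(L)$ as a finite product of local ratios over $w\nmid pdN$---finite because $L_w=\Ow$ gives $\hat\psi_w^0(L_w)=1$ for all but finitely many $w$---and to show the product equals $q^*(\alpha)\chi((\alpha))$. The two hypotheses on $\chi$ combined with Lemma~\ref{Lemma:q*-q} then yield both conclusions.

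For each $w\nmid pdN$, Lemma~\ref{aL} describes how multiplication by $\alpha$ acts on vertices of $\T_w^{n_w}$. In the inert case I factor $\alpha=w^{v_w(\alpha)}\eta$ with $\eta\in\Ow^\times$; combining the $\GL(2,\Z_w)$-invariance of $\psi_w^0$ (Lemma~\ref{L:inert-invariant}) with the $q$-homogeneity of Definition~\ref{D:q-homogeneous} yields $\psi_w^0(\alpha L_w)=\theta(w)^{v_w(\alpha)}\psi_w^0(L_w)$. In the split case $w\OK=\lambda_w\lambda'_w$, parts (2)--(3) of Lemma~\ref{aL} show that the idealistic anchor of $[L_w]$ is shifted from $M_w$ to $(I_1 M)_w$ with $I_1=\lambda_w^{v_{\lambda_w}(\alpha)}(\lambda'_w)^{v_{\lambda'_w}(\alpha)}$, while the distance from $[\alpha L_w]$ to its anchor equals the corresponding distance for $[L_w]$. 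Definition~\ref{D:split-psi} combined with the distance-only property Lemma~\ref{con split}(\ref{t-mu4}) then gives $\psi_w^0(\alpha L_w)=\chi(I_1)\psi_w^0(L_w)$. The hat-correction factor $m'_{[L_w]}$ depends only on the vertex, and in fact only on the smallest $m>0$ with $\epsilon^m L_w=L_w$ (since $\epsilon$ is a unit at every other completion). Because $K^\times$ is commutative, $\epsilon^m(\alpha L_w)=\alpha(\epsilon^m L_w)$, so this stabilizer condition is preserved by $\alpha$-multiplication, giving $m'_{[\alpha L_w]}=m'_{[L_w]}$; hence the ratios for $\hat\psi_w^0$ agree with those for $\psi_w^0$.

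Multiplying the local factors over $w\nmid pdN$ yields
\[
\frac{\Phi(\alpha L)}{\Phi(L)}=\Bigl(\prod_{w\text{ inert}}\theta(w)^{v_w(\alpha)}\Bigr)\Bigl(\prod_{w\text{ split}}\chi(\lambda_w)^{v_{\lambda_w}(\alpha)}\chi(\lambda'_w)^{v_{\lambda'_w}(\alpha)}\Bigr).
\]
The first product equals $q^*(\alpha)$ by Definition~\ref{D:q*}. Applying the hypothesis that $\chi$ is trivial on principal ideals generated by elements of $\Q^\times\cap K_{S_0}$ to a rational prime $r\nmid pdN$ inert in $K$ gives $\chi(r\OK)=1$, so the inert primes contribute trivially to $\chi((\alpha))$ and the second product equals $\chi((\alpha))$. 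Therefore $\Phi(\alpha L)/\Phi(L)=q^*(\alpha)\chi((\alpha))$. For $\alpha\in\KSQ$, $q^*(\alpha)=1$ by definition of $\KSQ$ and $\chi((\alpha))=1$ by the second hypothesis, proving the first claim. For $\alpha\in\Q^\times\cap K_{S_0}$, $\chi((\alpha))=1$ by the first hypothesis and $q^*(\alpha)=q(\alpha I)$ by Lemma~\ref{Lemma:q*-q}, proving the second. The main obstacle is the split case when $L_w$ is non-idealistic: one must track via Lemma~\ref{aL}(3) that $\alpha$-multiplication shifts only the idealistic anchor while preserving distance, and also verify that $m'_{[\cdot]}$ is genuinely $\alpha$-invariant by reducing it to the local $\epsilon$-stabilizer condition.
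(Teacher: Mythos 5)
Your proof is correct and follows essentially the same route as the paper: factor $\Phi(\alpha L)/\Phi(L)$ into local factors over $w\nmid pdN$, handle inert primes via $\GL(2,\Z_w)$-invariance and homogeneity, handle split primes via the cohort/distance structure from Lemma~\ref{aL}, and cancel the $m'$-corrections using commutativity of $K^\times$. The one cosmetic difference is that you package both conclusions through the single identity $\Phi(\alpha L)=q^*(\alpha)\chi((\alpha))\Phi(L)$ and specialize, whereas the paper proves $\KSQ$-invariance by the explicit computation and then handles the $q$-homogeneity separately by citing Lemma~\ref{new}; your treatment of the hat-correction, pinning down that $m'$ at a vertex is a purely local invariant of $L_w$, is if anything slightly more precise than the paper's.
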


\begin{proof}
Let $L=L(c,d)$ be a lattice in $K$ and let $\alpha\in\KSQ$.  Note that $m_L'=m_{\alpha L}'$, since $K^\times$ is commutative.  Hence, there is a single integer $m'$, such that for each prime $w\nmid pdN$, we have
\[\hat\psi_w^0(L_w)=m'\psi_w^0(L_w)\]
and
\[\hat\psi_w^0((\alpha L)_w)=m'\psi_w^0((\alpha L)_w).\]

Assume first that $w$ is inert in $K$.  Then we may factor $\alpha\OK$ as 
\[\alpha\OK=w^jI_2,\]
with $I_2$ a fractional ideal that is relatively prime to $w$.  By Lemma~\ref{aL}(1), we have
\[(\alpha L)_w=w^jgL_w\]
for some $g\in\gl(2,\Z_w)$.  Since $\psi_w^0$ is homogeneous and is $\gl(2,\Z_w)$-invariant on $\T_\ell^2$ (by Lemma~\ref{L:inert-invariant}),
 we have
\[\hat\psi_w^0((\alpha L)_w)=m'\psi_w^0(w^jgL_w)=m'q^*(w^j)\psi_w^0(L_w)=q^*(w^j)\hat\psi_w^0(L_w).\]

Now assume that $w$ splits in $K$.  Let $s\in\T_\ell$ be the vertex corresponding to $L_w$, and let $u$ correspond to $(\alpha L)_w$.

If $s$ is idealistic, so is $u$, and we see that
\[\psi_w^0(s)=\hat\chi(s)\theta_{s,\mu}(s)=\hat\chi(s)=\chi(L)=\chi(\alpha L)=\hat\chi(u)=
\hat\chi(u)\theta_{u,\mu}(u)\psi_w^0(u).\]

If $s$ is nonidealistic, then so is $u$, and $u=gs$ for some $g\in\gl(2,\Q_w)$.  Suppose $s$ lies in the open cohort $C(t)$ of the idealistic vertex $t$ corresponding to $I_w$, where $I$ is an ideal of $w$-power index in $\OK$.  By Lemma~\ref{aL}(3), $u$ is in the open cohort $C(t_1)$ of the idealistic point $t_1$ corresponding to $(I_1I)_w$, 
where $\alpha\OK=I_1I_2$, with $I_1$ having norm a power of $w$, and $I_2$ having norm relatively prime to $w$.  In addition, the distance from $s$ to $t$ is the same as the distance from $u$ to $t_1$.  Hence,
\[\hat\chi(t_1)=\chi(I_1I)=\chi(I_1)\chi(I)=\chi(I_1)\hat\chi(t)\]
and
\[\theta_{t,\mu}(s)=\theta_{t_1,\mu}(u).\]
Therefore,
\begin{align*}
\hat\psi_w^0((\alpha L)_w)&=m'\psi_w^0(u)\cr
&=m'\hat\chi(t_1)\theta_{t_1,\mu}(u)\cr
&=m'\chi(I_1I)\theta_{t_1,\mu}(u)\cr
&=m'\chi(I_1)\hat\chi(t)\theta_{t,\mu}(s)\cr
&=\chi(I_1)\hat\psi_w^0(L_w).
\end{align*}

In all of this, the fractional ideal $I_1$ depends on $w$; we will call it $\wpart$.
Then $\wpart$ is a product of powers of primes lying over $w$; if $w$ is inert, it is clear that $\wpart$ is principal with a generator $\beta_\alpha(w)$ in $\Q^\times\cap K_{S_0}$, so that $\chi(\wpart)=1$.  

Since $\alpha\in\KSQ$,  $\alpha$ is relatively prime to $pdN$, so that
\[\alpha\OK=\prod_{w\nmid pdN}\wpart=\left(\prod_{w\text{ inert}}\wpart\right)\left(\prod_{w\text{ split}}\wpart\right).\]
Setting $\beta=\prod_{w\text{ inert}}\beta_\alpha(w)$, we have
\[\beta\OK=\left(\prod_{w\text{ inert}}\wpart\right).\]
Since $\alpha\in\KSQ$, $q^*(\alpha)=1$.  Because $q^*$ depends only on inert prime factors, and the powers of inert primes dividing $\alpha$ and $\beta$ are equal, we see that
\[1=q^*(\alpha)=q^*(\beta).\]  In addition, we have that $\chi(\beta\OK)=1$, since $\beta$ is a product of powers of elements of $\Q^\times\cap K_{S_0}$, and we have assumed that $\chi$ is trivial on ideals generated by elements of $\Q^\times\cap K_{S_0}$.  
  Hence, we see that
\[ \prod_{w\text{ split}}\wpart\]
is principal, with generator $\alpha/\beta$, so
\[\prod_{w\text{ split}}\chi(\wpart)=\frac{\chi(\alpha\OK)}{\chi(\beta\OK)}=\chi(\alpha\OK)=1,\]
since we have assumed that $\chi$ is trivial on principal ideals generated by elements of $\KSQ$.

Hence, we obtain
\begin{align*}
\Phi(\alpha L)&=\prod_{w\nmid pdN}\hat\psi_w^0((\alpha L)_w)\cr
&=\left(\prod_{w\text{ inert}}\hat\psi_w^0((\alpha L)_w)\right)\left( \prod_{w\text{ split}}\hat\psi_w^0((\alpha L)_w)\right)\cr
&=\left(\prod_{w\text{ inert}}q^*(\beta_\alpha(w))\hat\psi_w^0((L)_w)\right)\left( \prod_{w\text{ split}}\chi(\wpart)\hat\psi_w^0((L)_w)\right)\cr
&=q^*(\beta)\left( \prod_{w\text{ split}}\chi(\wpart)\right)\prod_{w\nmid pdN}\hat\psi_w^0((L)_w)
\cr
&=\Phi(L).\qedhere
\end{align*}

Finally, if $\alpha\in \Q^\times\cap K_{S_0}$, it is a product of powers of primes not dividing $pdN$. 
We may thus assume that $\alpha$ is such a prime.  The $q$-homogeneity of $\Phi$ then follows by Lemma~\ref{new} from the homogeneity of the individual $\psi_w^0$ functions (see Theorem~\ref{T:inert-psi} for inert primes, and note that homogeneity is trivial for split primes).
\end{proof}

\section{Galois representations}

We now define the Galois representations to which our main theorem below applies.  

As before, we let $K$ be a real quadratic field of discriminant $d$, cut out by the Dirichlet character $\theta$.  Let $\F$ be a field of characteristic 0 (in which case we set $p=1$) or a field of odd characteristic $p$, let $G_K$ be the absolute Galois group of $K$ (i.e.\  $\gal(\bar\Q/K)$), and let $\chi:G_K\to\F^\times$ be a character of $G_K$ with finite image.  By class field theory, we can think of $\chi$ as a character on the group of the nonzero fractional ideals of $K$ relatively prime to $N$ for some positive $N\in\Z$.  Let $L$ be the fixed field of the kernel of $\chi$.  Then $L/K$ is Galois.  
We fix an $M\geq3$ that divides $pdN$ and define $S_0$ and $S$ as in Definition~\ref{D:S_0}.

We place the following conditions on the character $\chi$.

\begin{enumerate}[(1)]
\item $\chi$ is trivial on the principal fractional ideals of $K$ generated by elements of $\KSQ$.
\item $\chi$ is trivial on the principal fractional ideals of $K$ generated by elements of $\Q^\times\cap K_{S_0}$.
\item $[L:K]$ is odd.
\item $L/\Q$ is Galois.
\end{enumerate}

An example of such a $\chi$ would be any unramified character of $G_K$ of odd order; such a character would be trivial on all principal fractional ideals of $K$, and $L$ would be a subfield of the Hilbert class field of $K$ and hence be Galois over $\Q$.

Let $\rho:G_\Q\to\gl(2,\F)$ be the induced representation 
\[\rho=\ind_{G_K}^{G_\Q}\chi.\]
Note that this representation will factor through $\gal(L/\Q)$.  We have an exact sequence
\[1\to\gal(L/K)\to\gal(L/\Q)\to\gal(K/\Q)\to1;\]
since $[L:K]$ is odd, this sequence splits, so there is an element $\tau$ of order $2$ in $\Gal(L/\Q)$ mapping to the nonidentity element of $\gal(K/\Q)$; we can lift it to an element $\tau\in G_\Q$, and we have that $\tau^2$ is the identity modulo $G_L$.

With respect to a suitable basis, it is easy to see that for $g\in G_\Q$, we have the following:
\begin{enumerate}[(a)]
\item  If $g\in G_K$, then 
\[\rho(g)=\begin{pmatrix}\chi(g)&0\cr0&\chi(g')\end{pmatrix},\]
where $g'=\tau^{-1}g\tau$.
\item If $g\notin G_K$, then $g=h\tau$ for some $h\in G_K$, and 
\[\rho(g)=\begin{pmatrix}0&\chi(h')\cr\chi(h\tau^2)&0\end{pmatrix},\]
where $h'=\tau^{-1}h\tau$.
\end{enumerate}

If we now let $g$ be a Frobenius element in $G_\Q$ for some prime $\ell$ of $\Q$ not dividing $pdN$ (so that $\ell$ is unramified in $L/\Q$), then we have the following two cases.

If $\ell$ splits in $K$ and $\ell\nmid N$, then $g\in G_K$.  If we write $\ell\OK=\lambda\lambda'$ with $\lambda,\lambda'$ primes in $K$, then we may take $g$ to be a Frobenius in $G_K$ of $\lambda$; a Frobenius of $\lambda'$ will be $g'$.  Hence, we have 
$$\tr(\rho(g))=\chi(g)+\chi(g')=\chi(\lambda)+\chi(\lambda'),$$
 and
\[\det(\rho(g))=\chi(g)\chi(g')=\chi(\lambda)\chi(\lambda')=\chi(\lambda\lambda')=\chi(\ell\OK)=1\]
by condition (2) on the character $\chi$.

On the other hand, if $\ell$ is inert in $K$ and $\ell\nmid N$, write $g=h\tau$ as above.
Then
$$\tr(\rho(g))=0$$
 and $\det(\rho(g))=-\chi(h\tau^2)\chi(h')$ with $h'=\tau^{-1}h\tau$.  We note that $g^2$ is a Frobenius of $\ell\OK$ in $G_K$.  Hence, we have
\[\det(\rho(g))=-\chi(h\tau^2)\chi(h')=-\chi(h\tau^2h')=-\chi((h\tau)^2)=-\chi(g^2)=-\chi(\ell\OK)=-1,\]
where we have again used condition (2) on $\chi$.

Note that in each case, when $g$ is a Frobenius in $G_\Q$ of $\ell$, we have $\det(\rho(g))=\theta(\ell)$.

Now we check that $\rho$ is even.  Let $c\in G_\Q$ be a complex conjugation.  Since $c$ has order 2 and $\chi$ has odd order, $\chi(c)=\chi(\tau^{-1}c\tau)=1$.  From the formula in (a), since $c\in G_K$,  $\rho(c)$ is the identity matrix.

\begin{theorem}\label{T:main}
Let $K$ be a real quadratic field of discriminant $d$, let $\F$ be a field of characteristic 0 or a finite field of odd characteristic.  In the first case set $p=1$ and in the second case let $p$ be the characteristic of $\F$.
Let $\chi:G_K\to\F^\times$ be a character with finite image.   Let $L$ be the fixed field of the kernel of $\chi$ and choose $N\in\Z$ so that $L/K$ is unramified outside primes of $K$ dividing $N$. Let $M\geq 3$, $S_0=S_0(pdN)$, $S=S(M)\cap S_0$, $\theta$ the Dirichlet character cutting out $K$,  
$q$ the character of $Z$ determined by $q(rI)=\theta(r)$ for all $r\in \Z\cap K_{S_0}$, 
and  $\MSQ$ the module defined in Definition~\ref{D:MSQ}.  Assume
\begin{enumerate}[(1)]
\item $\chi$ is trivial on the principal fractional ideals of $K$ generated by elements of $\KSQ$.
\item $\chi$ is trivial on the principal fractional ideals of $K$ generated by elements of $\Q^\times\cap K_{S_0}$.
\item $[L:K]$ is odd.
\item $L/\Q$ is Galois.
\end{enumerate}
Then $\rho:G_\Q\to\gl(2,\F)$ given by $\rho=\ind_{G_K}^{G_\Q}\chi$ is an even Galois representation, and is attached to a Hecke eigenclass in $H^1(\gl(2,\Z),\MSQ^*)$.
\end{theorem}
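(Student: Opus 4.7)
The plan is to assemble the desired eigenclass in $H^1(\gl(2,\Z),\MSQ^*)$ from local lattice functions on the branched Bruhat--Tits graphs at each finite place $w \nmid pdN$, and then verify that the resulting Hecke eigenvalues coincide with the traces and determinants of $\rho(\Frob_\ell)$ that were computed in the paragraphs immediately preceding the theorem. For each such $w$ set $n_w = 1$ if $w$ splits in $K$ and $n_w = 2$ if $w$ is inert, and define $\psi_w^0 \in F(\T_w^{n_w})$ as follows: for $w$ inert, take the Laplace eigenfunction of eigenvalue $0$ produced by Theorem~\ref{T:inert-psi}; for $w$ split with $w\OK = \lambda\lambda'$, take the function of Definition~\ref{D:split-psi} with parameter $\mu_w = \chi(\lambda)+\chi(\lambda')$, which by Lemma~\ref{lemsplit} is a Laplace eigenfunction with that eigenvalue. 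In both cases $\psi_w^0$ is $q$-homogeneous, satisfies $\psi_w^0(\Ow) = 1$, and is locally constant relative to $T_w$ and every $x \in \XSQ$ (by Lemmas~\ref{L:inert-invariant} and~\ref{loc-const} in the inert case, and by Lemma~\ref{lemsplit}(2) in the split case).

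I then form the restricted product
\[
\Phi(L) \;=\; \prod_{w \nmid pdN} \hat\psi_w^0(L_w),
\]
which is well defined on lattices in $K$ since $L_w = \Ow$ for almost all $w$. Hypotheses (1) and (2) on $\chi$ are precisely the inputs Theorem~\ref{L:KSQ-invariant} requires to conclude that $\Phi$ is $\KSQ$-invariant and $q$-homogeneous, so by Lemma~\ref{Lemma:homothety-to-cohomology} $\Phi$ represents a class in $H^1(\gl(2,\Z),\MSQ^*)$; this class is nonzero because $\Phi(\OK) = 1$. For each $\ell \nmid pdN$, applying Corollary~\ref{eigenclass} with $\psi = \psi_\ell^0$ (its hypotheses having just been verified) then gives $\Phi T_\ell = \mu_\ell \Phi$ and $\Phi T_{\ell,\ell} = \theta(\ell)\Phi$, so the Hecke polynomial at $\ell$ is $1 - \mu_\ell X + \theta(\ell)X^2$.

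The case-by-case Frobenius calculation done just before the theorem statement shows this coincides with $\det(I - \rho(\Frob_\ell)X)$: in the split case hypothesis (2) yields $\det\rho(\Frob_\ell) = \chi(\lambda\lambda') = 1 = \theta(\ell)$ and the trace is $\chi(\lambda) + \chi(\lambda') = \mu_\ell$; in the inert case the trace vanishes and $\det\rho(\Frob_\ell) = -\chi(\ell\OK) = -1 = \theta(\ell)$, again by (2). The evenness of $\rho$ is also extracted from the same discussion, using hypotheses (3) and (4) to produce a lift $\tau \in G_\Q$ of the nontrivial element of $\gal(K/\Q)$ so that complex conjugation $c$ lies in $G_K$ with $\chi(c) = 1$ (as $\chi$ has odd order), forcing $\rho(c) = I$. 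The single nontrivial obstruction in this chain is Theorem~\ref{L:KSQ-invariant}, whose proof has to match the delicate interplay between idealistic and non-idealistic vertices at split primes with the global arithmetic of $\chi$; granting that theorem, the present argument is essentially a packaging of Corollary~\ref{eigenclass} with the explicit Frobenius computation.
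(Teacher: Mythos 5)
Your proposal is correct and follows essentially the same path as the paper's own proof: define $\Phi$ as the restricted product of the transforms $\hat\psi_w^0$, invoke Theorem~\ref{L:KSQ-invariant} and Lemma~\ref{Lemma:homothety-to-cohomology} to place it in $H^1(\gl(2,\Z),\MSQ^*)$, then apply Corollary~\ref{eigenclass} (fed by Theorem~\ref{T:inert-psi} and Lemma~\ref{lemsplit}) and match the Hecke polynomial against the Frobenius computation carried out just before the theorem. There is nothing materially different from the paper's argument, and no gap.
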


\begin{proof} Given $\chi$ satisfying the conditions of the theorem, we define 
\[\Phi(L)=\prod_{w\nmid pdN}\hat\psi_w^0(L_w)\]
where $\psi_w^0$ is the function constructed in the proof of Theorem~\ref{T:inert-psi} for $w$ inert in $K$ and prime to $pN$, and the function defined by Definition~\ref{D:split-psi} for $w$ splitting in $K$ and prime to $pN$. 

By Theorem~\ref{L:KSQ-invariant}, $\Phi$ is $\KSQ$-invariant and $q$-homogeneous.  Hence, by Lemma~\ref{Lemma:homothety-to-cohomology} we may consider it as an element of $H^1(\gl(2,\Q),\MSQ^*)$.    By Corollary~\ref{eigenclass}, combined with Lemma~\ref{lemsplit} and Theorem~\ref{T:inert-psi} we see that for all $\ell$ unramified in $L/\Q$,  $\Phi$ is an eigenvector for $T_\ell$ and $T_{\ell,\ell}$, and that the eigenvalues of $T_\ell$ match the trace of $\rho(\frob_\ell)$.  The $q$-homogeneity of $\Phi$ shows that the eigenvalues of $T_{\ell,\ell}$ match the determinant of $\rho(\frob_\ell)$ for all $\ell$ unramified in $L/\Q$.  Hence, $\Phi$ is attached to $\rho$.  
\end{proof}


\bibliographystyle{plain}
\bibliography{biblio}

\end{document}